\documentclass[12pt]{amsart}       
\usepackage{txfonts}
\usepackage{amssymb}
\usepackage{eucal}
\usepackage{graphicx}
\usepackage{amssymb}
\usepackage{amsmath}
\usepackage{amscd}
\usepackage[all]{xy}           
\usepackage{amsfonts,latexsym}
\usepackage{xspace}
\usepackage{epsfig}
\usepackage{float}
\usepackage{mathrsfs} 
\usepackage{color}
\usepackage{fancybox}
\usepackage{colordvi}
\usepackage{multicol}
\usepackage{colordvi}
\usepackage{ifpdf}
\usepackage{iftex}
\ifXeTeX
  \usepackage{fontspec}
  \usepackage{xeCJK}
  \setCJKsansfont{Noto Sans CJK SC}
\else
\fi
\ifXeTeX
  \usepackage[colorlinks,final,backref=page,hyperindex]{hyperref}
\else
  \ifpdf
    \usepackage[colorlinks,final,backref=page,hyperindex]{hyperref}
  \else
    \usepackage[colorlinks,final,backref=page,hyperindex,hypertex]{hyperref}
  \fi
\fi
\usepackage[active]{srcltx} 

\makeatletter
\newcommand{\model@english@today}{%
  \ifcase\month\or January\or February\or March\or April\or May\or June%
  \or July\or August\or September\or October\or November\or December\fi
  \space\number\day, \number\year}
\let\today\model@english@today
\AtBeginDocument{%
  \let\today\model@english@today
}
\makeatother

\usepackage{tikz}

\usepackage{graphicx}

\usepackage{longtable}
\usepackage{booktabs}
\usepackage{array}

\usepackage{xcolor}
\usepackage[most]{tcolorbox}
\usepackage{enumitem}
\newtheorem{theorem}{Theorem}[section]

\newtheorem{proposition}[theorem]{Proposition}
\newtheorem{lemma}[theorem]{Lemma}

\newtheorem{corollary}[theorem]{Corollary}
\newtheorem{prop-def}{Proposition-Definition}[section]
\newtheorem{coro-def}{Corollary-Definition}[section]

\theoremstyle{definition}
\newtheorem{definition}[theorem]{Definition}
\newtheorem{remark}[theorem]{Remark}

\newtheorem{example}[theorem]{Example}

\newtheorem{assumption}[theorem]{Assumption}

\newcommand{\nc}{\newcommand}
\nc{\tred}[1]{\textcolor{red}{#1}}
\nc{\tblue}[1]{\textcolor{blue}{#1}}
\nc{\tgreen}[1]{\textcolor{green}{#1}}
\nc{\tpurple}[1]{\textcolor{purple}{#1}}
\nc{\btred}[1]{\textcolor{red}{\bf #1}}
\nc{\btblue}[1]{\textcolor{blue}{\bf #1}}
\nc{\btgreen}[1]{\textcolor{green}{\bf #1}}
\nc{\btpurple}[1]{\textcolor{purple}{\bf #1}}
\nc{\NN}{{\mathbb N}}
\nc{\ncsha}{{\mbox{\cyr X}^{\mathrm NC}}} \nc{\ncshao}{{\mbox{\cyr
X}^{\mathrm NC}_0}}

\newcommand{\delete}[1]{}

\nc{\mlabel}[1]{\label{#1}}
\nc{\mcite}[1]{\cite{#1}}
\nc{\mref}[1]{\ref{#1}}
\nc{\meqref}[1]{\eqref{#1}}
\nc{\mbibitem}[1]{\bibitem{#1}}

\delete{
\nc{\mlabel}[1]{\label{#1}{\hfill \hspace{1cm}{\bf{{\ }\hfill(#1)}}}}
\nc{\mcite}[1]{\cite{#1}{{\bf{{\ }(#1)}}}}
\nc{\mref}[1]{\ref{#1}{{\bf{{\ }(#1)}}}}
\nc{\meqref}[1]{\eqref{#1}{{\bf{{\ }(#1)}}}}
\nc{\mbibitem}[1]{\bibitem[\bf #1]{#1}}
}
\font\cyr=wncyr10 
\font\scyr=wncyr6
\nc{\sha}{{\mbox{\scyr X}}}
\nc{\shap}{{\mbox{\cyrs X}}} 
\nc{\shpr}{\diamond}    
\nc{\shp}{\ast} \nc{\shplus}{\shpr^+}
\nc{\shprc}{\shpr_c}    
\nc{\dep}{\mrm{dep}} \nc{\lc}{\lfloor} \nc{\rc}{\rfloor}
\nc{\db}{\leq_{\rm db}} \nc{\bfk}{\bf k}

\font\cyr=wncyr10 \font\cyrs=wncyr7
\nc{\li}[1]{\textcolor{red}{#1}}
\nc{\lir}[1]{\textcolor{red}{Li:#1}}
\nc{\yi}[1]{\textcolor{blue}{Yi: #1}}
\nc{\xing}[1]{\textcolor{purple}{Xing:#1}}
\nc{\revise}[1]{\textcolor{red}{#1}}

\nc{\Lin}{\operatorname{Lin}}  \nc{\Ad}{\operatorname{Ad}}
\providecommand{\etree}{\mathbf{1}}

\newtcolorbox{qshuStepBox}[1]{
  enhanced,
  breakable,
  colback=blue!2,
  colframe=blue!45!black,
  title={#1},
  fonttitle=\bfseries,
  boxrule=0.45pt,
  arc=1mm,
  left=1.2em,
  right=1.2em,
  top=0.7em,
  bottom=0.7em,
  before skip=0.9em,
  after skip=0.9em
}

\numberwithin{equation}{section}

\providecommand{\bZ}{\mathbf{Z}} \providecommand{\bX}{\mathbf{X}} \providecommand{\RR}{\mathbb{R}}
 \providecommand{\zuvij}{(\mathcal Z_{u,v}^{(a)})_{i,j}}
\providecommand{\zeuvi}{(\zeta_{u,v}^{(a)})_{i}} \providecommand{\crk}{{\rm CRK}} 

\NewDocumentCommand{\sint}{e{_^}}{\mathop{\textstyle\int}\nolimits_{\mkern-6mu #1}^{#2}}

\newcommand{\bigcdot}{\mathord{\textstyle\cdot}}
\begin{document}

\title[Controlled Runge-Kutta methods for CRDE]{Controlled B-series, controlled Runge-Kutta methods and controlled rough differential equations}

\author{Xinyuan An} 
\address{School of Mathematics and Statistics, Lanzhou University
Lanzhou, 730000, China
}
\email{anxy2026@Izu.edu.cn}

\author{Xingya Fan}
\address{College of Mathematics and System Sciences, Xinjiang University, 
Urumqi, 830017, China
}
\email{fanxingya@xju.edu.cn}

\author{Xing Gao$^{*}$}\thanks{*Corresponding author}
\address{School of Mathematics and Statistics, Lanzhou University
Lanzhou, 730000, China; Gansu Provincial Research Center for Basic Disciplines of Mathematics
and Statistics, Lanzhou, 730070, China
}
\email{gaoxing@lzu.edu.cn}

\date{\today}
\begin{abstract}
We develop controlled B-series and controlled Runge-Kutta methods as
extensions of classical B-series and Runge-Kutta theory to
controlled-driven rough differential equations. The driving signal is
a path controlled by an underlying $\alpha$-H\"older step-$2$ rough
path, with $\alpha\in(1/3,1/2]$, as arises when the output of one rough
system drives another. Controlled B-series retain the classical
rooted-tree and elementary-differential structure, while incorporating
driver-dependent coefficients defined recursively through controlled
rough integration. We establish finite-order expansions of the exact
solution and its controlled Runge-Kutta approximation, and derive
corresponding tree-based order conditions. Under suitable smoothness,
solvability, stability, and boundedness assumptions, matching through
tree order $p$ yields local error $O(|t-s|^{(p+1)\alpha})$ and global
error $O(h^{(p+1)\alpha-1})$, provided $(p+1)\alpha>1$. The standard
rough differential equation formulation is recovered when the
controlled driver is the reference path itself, and the classical
time-driven case recovers the usual B-series and Runge-Kutta methods.
We further construct a simplified method using only increments of the
controlled driver. Segmentwise canonical lifting of its piecewise
linear interpolation gives explicit tree coefficients and reduces the
third-order conditions to the classical ones. Assuming a Wong-Zakai
solution-approximation rate $r_0>0$, the simplified method has global
error $O(h^{\min\{r_0,(p+1)\alpha-1\}})$. Numerical experiments with
fractional Brownian controlled drivers illustrate the resulting
convergence behaviour.
\end{abstract}

\makeatletter
\@namedef{subjclassname@2020}{\textup{2020} Mathematics Subject Classification}
\makeatother
\subjclass[2020]{
60L20,
60L70,
65C30,
65L06,
65L20.
}

\keywords{Runge-Kutta methods, rough differential equations, controlled rough paths, B-series}

\maketitle

\tableofcontents

\setcounter{section}{0}

\allowdisplaybreaks

\section{Introduction}
\label{sec:intro}
Controlled rough differential equations, controlled B-series, and
controlled Runge-Kutta methods form the three components of this
paper. The equations describe systems driven by paths controlled by a
reference rough path. Controlled B-series organise their local solution
expansions, and controlled Runge-Kutta methods use the same rooted-tree
structure to formulate numerical order conditions. Our aim is to extend
the classical relation between differential equations, B-series, and
Runge-Kutta methods to this controlled-driver setting, including an
increment-based implementation and its convergence analysis.

\subsection{From RDE to CRDE}
\label{subsec:from-rde-to-crde}

Rough path theory, initiated by Lyons~\cite{Lyons1998}, provides a
pathwise framework for differential equations driven by irregular
signals; see \cite{FH2020,FV2010,LQ2002}. In the step-$2$ setting,
a rough differential equation (RDE) is driven by an enhanced signal
$\mathbf X=(X,\mathbb X)$ of $\alpha$-H\"older regularity, where
$\alpha\in(1/3,1/2]$. The first level $X$ is the immediate input,
while the second level $\mathbb X$ supplies the additional information
needed to define rough integration. The resulting equation is usually
written as $dY_t=f(Y_t)\,d\mathbf X_t$.

A controlled-driver formulation becomes natural when the input to a
system is obtained from another rough system. An intermediate response,
or a smooth observation of that response, need not coincide with $X$,
but its increments retain a precise relationship with those of $X$.
Gubinelli's controlled-path framework~\cite{Gubinelli2004} expresses
this relationship through a pair $\mathbf Z=(Z,Z')$, where $Z$ and
$Z'$ are $\alpha$-H\"older continuous and
$Z_{s,t}=Z'_sX_{s,t}+R^Z_{s,t}$ with a $2\alpha$-H\"older
remainder. The precise definition is recalled in
Definition~\ref{def:controlled-path}. Here $Z$ is the effective
input, and $Z'$ is its Gubinelli derivative relative to $X$; the
notation $\mathbf Z$ denotes a controlled pair rather than an
independently prescribed enhancement over $Z$.

We study controlled-driven rough differential equations (CRDEs),
written schematically as
\[
dY_t=F(Y_t)\,d\mathbf Z_t,
\qquad
Y_0=y_0,
\]
where $Z$ takes values in $\mathbb R^m$, $Y$ takes values in
$\mathbb R^e$, and
$F:\mathbb R^e\to\mathcal L(\mathbb R^m,\mathbb R^e)$.
Their precise meaning is given in
Definition~\ref{def:controlled-driven-rde} through the controlled
integral formulation \eqref{eq:controlled-driven-fixed-point}.
The reference rough path $\mathbf X$ determines the integration
structure, whereas $\mathbf Z$ supplies the driving data. The
well-posedness and continuity results used here are taken from
\cite{LiGao2026}. The ordinary RDE formulation is recovered when
$m=d$, $Z=X$, and $Z'=\operatorname{Id}_{\mathbb R^d}$.

A typical example is a cascade in which an upstream state satisfies
$dU_t=V(U_t)\,d\mathbf X_t$ and a downstream system is driven by
$Z_t=g(U_t)$ for a sufficiently smooth observation map $g$.
The controlled-path composition rule gives
$Z'_t=Dg(U_t)V(U_t)$; see \cite{FH2020,Gubinelli2004} and
Lemma~\ref{lem:composition}. Thus, the output of the upstream system
is an admissible controlled input for the downstream equation. Smooth
transformations $Z_t=g(X_t)$ provide a simpler instance of the same
mechanism. These examples motivate treating the effective driver
as an object in its own right, particularly when only that output,
rather than the full upstream state, is available to the numerical
method.

The controlled remainder is part of the driving information and
cannot simply be discarded. For example, in one dimension,
$Z_t=X_t+t$ is controlled by $X$ with derivative $Z'_t=1$ and
remainder $R^Z_{s,t}=t-s$, since $2\alpha\le1$. For
$F\equiv1$, its CRDE solution is $Y_t=y_0+X_t-X_0+t$.
Replacing the driver by its leading controlled component alone would
lose the drift term. The controlled description therefore preserves
more than the leading-order dependence on the reference signal.

The purpose of the CRDE formulation is not to place these equations
outside ordinary rough path theory. When an induced enhancement or
an enlarged state-space representation is available, the same dynamics
may also be described by a standard RDE. The direct controlled-driver
viewpoint is useful because it retains the structure of the supplied
input and allows its approximation to be analysed separately from
the numerical solution of the downstream equation. This motivates
both the controlled B-series construction and the controlled
Runge-Kutta methods developed below.

\subsection{Classical foundations and numerical challenges}
\label{subsec:intro-numerics}

The relation between rooted trees, B-series, and Runge-Kutta order
conditions is classical
\cite{Butcher1963,Butcher1972,Butcher1987,Hairer1993}.
The tree structure records compositions of derivatives of the vector
fields, while the associated coefficients distinguish the exact
solution from a numerical approximation. B-series also organise
stochastic Runge-Kutta expansions~\cite{Burrage2000}.
In rough path theory, tree-indexed expansions are developed through
branched rough paths \cite{Gub2010,HK2015}. Runge-Kutta methods
for Gaussian rough Hamiltonian systems are studied in
\cite{HHW2018}, and Redmann and Riedel~\cite{RR2022} develop
B-series expansions and order conditions for rough differential
equations.

Our construction extends this classical algebraic organisation to
controlled drivers. The rooted trees and elementary differentials
are retained, but the exact coefficients are generated recursively
by controlled rough integration against $\mathbf Z$. Consequently,
the extension requires more than a formal replacement of symbols:
the coefficient paths must remain controlled under the recursion,
and their regularity estimates must be compatible with their tree
degrees. These estimates are needed to control the remainders in the
exact and numerical expansions and to justify coefficient matching.
For the time driver $Z_t=t$, with the usual step-scaled tableau,
the construction recovers the classical B-series and Runge-Kutta
setting.

A separate issue is implementation. The controlled coefficients
contain iterated-integral information that may not be directly
available from sampled values of $Z$. Motivated by simplified rough
numerical schemes \cite{DNT2012,RR2022}, we therefore replace $Z$
by its piecewise linear interpolation. Canonical lifting of each
linear segment makes the tree coefficients explicit in terms of
increments. This produces a method using a fixed Runge-Kutta tableau,
but it also changes the reference equation. The numerical error
relative to the interpolated problem must therefore be distinguished
from the error of approximating the original CRDE. The latter is
handled through a separate Wong--Zakai assumption, motivated by
Gaussian rough path approximation theory~\cite{FR2014}.

\subsection{Main contributions and outline}
\label{subsec:intro-main-results}

The main contributions connect controlled integration, B-series
expansions, and numerical convergence. Let $p\in\NN$ satisfy
$(p+1)\alpha>1$. The statements summarised below are subject to
the smoothness and, where applicable, stage solvability, flow
stability, and boundedness hypotheses of the cited results.

\smallskip
\noindent\textbf{Controlled B-series for CRDE solutions.}
We construct controlled tree and forest coefficients by recursive
controlled rough integration in
Definition~\ref{def:controlled-forest-coefficients}, and combine
them with the classical elementary differentials to define controlled
B-series in Definition~\ref{def:controlled-b-series}.
Proposition~\ref{prop:well-posedness-controlled-coefficients}
establishes the controlled-path regularity and degree-dependent
estimates required for this construction.
Theorem~\ref{thm:exact-controlled-bseries} then gives the exact
solution expansion \eqref{eq:exact-controlled-bseries-expansion},
with a remainder of order $(p+1)\alpha$. This identifies the driving
coefficients relevant to local approximation while retaining the
classical separation between vector-field and driving data.

\smallskip
\noindent\textbf{Controlled Runge-Kutta methods and order conditions.}
Definition~\ref{def:controlled-rk} introduces controlled Runge-Kutta
(CRK) methods whose stepwise matrices and weights may depend on the
controlled coefficients. The numerical stage and output expansions in
Theorem~\ref{thm:rk-bseries} have the same elementary-differential
structure as the exact solution expansion. The tree matching
condition \eqref{eq:tree-order-condition} yields local accuracy
$O(|v-u|^{(p+1)\alpha})$ in
Theorem~\ref{thm:local-truncation-error-rk}. Under the flow stability
and boundedness hypotheses, Theorem~\ref{thm:global-controlled-rk}
gives global accuracy $O(h^{(p+1)\alpha-1})$, where $h$ is the mesh
size. Proposition~\ref{prop:tree-order-conditions-order-three}
makes the matching conditions explicit through tree order three.

\smallskip
\noindent\textbf{An increment-based simplification and classical tableaux.}
We introduce the simplified controlled Runge-Kutta (SCRK) method in
Definition~\ref{def:simplified-controlled-rk}.
Proposition~\ref{prop:tree-coefficients-piecewise-linear} gives
the explicit segmentwise coefficient formula
\eqref{eq:piecewise-linear-tree-coefficient}, and
Proposition~\ref{prop:simplified-factorization} gives the
corresponding factorization of the numerical coefficients. This
reduces the comparison to scalar weights determined by the tableau.
In particular, Proposition~\ref{prop:simplified-third-order-conditions}
shows that tree order three is equivalent to the classical third-order
Runge-Kutta conditions. Classical tableaux can therefore be used
without direct evaluation of controlled iterated integrals. The local
error estimate in
Theorem~\ref{thm:simplified-local-error-piecewise-linear} concerns
the solution of the piecewise linear problem, rather than the original
CRDE solution.

\smallskip
\noindent\textbf{Global convergence with two distinct error sources.}
Theorem~\ref{thm:global-simplified-smooth} bounds the SCRK
discretization error relative to the interpolated problem. Under
Assumption~\ref{ass:controlled-wong-zakai}, the solution of that
problem approximates the original CRDE solution at rate $r_0>0$.
Theorem~\ref{thm:global-simplified-rk} combines these estimates
and gives the global rate
$\min\{r_0,(p+1)\alpha-1\}$; the full error bounds are stated in
\eqref{eq:global-error-simplified} and
\eqref{eq:global-error-simplified-min}. For tree order three,
Corollary~\ref{cor:global-third-order} yields the rate
$\min\{r_0,4\alpha-1\}$. Thus, classical tableau order and
convergence order in the rough time scale play different roles:
increasing the former improves the discretization bound, but need
not improve the overall rate when driver approximation is the
limiting factor. Numerical experiments with fractional Brownian
controlled drivers illustrate this distinction.

\smallskip
\noindent\textbf{Outline of the paper.}
Section~\ref{sec:preliminaries} recalls controlled rough paths and
integration and states the well-posedness results in
Lemmas~\ref{thm:local-well-posedness}
and~\ref{cor:global-well-posedness}.
Section~\ref{sec:algebraic-framework} develops controlled B-series,
including the coefficient estimates in
Proposition~\ref{prop:well-posedness-controlled-coefficients} and
the solution expansion in Theorem~\ref{thm:exact-controlled-bseries}.
Section~\ref{sec:runge-kutta-methods} derives the numerical expansion
and local error bound in Theorems~\ref{thm:rk-bseries}
and~\ref{thm:local-truncation-error-rk}.
Section~\ref{sec:simplified-rk} develops the increment-based method,
establishes its local error estimate in
Theorem~\ref{thm:simplified-local-error-piecewise-linear}, and
identifies its third-order conditions in
Proposition~\ref{prop:simplified-third-order-conditions}.
Section~\ref{sec:global-error} establishes the global bounds in
Theorems~\ref{thm:global-controlled-rk},
\ref{thm:global-simplified-smooth}, and~\ref{thm:global-simplified-rk}.
The numerical experiments in
Subsection~\ref{sec:numerical-experiments} are reported in
Figure~\ref{fig:controlled-driver-mean-error} and
Table~\ref{tab:controlled-driver-rates}.

\smallskip
\noindent\textbf{Notation.}
Throughout the paper, $\alpha\in(1/3,1/2]$ is fixed, $\NN$ denotes
the set of positive integers, and $\RR$ denotes the real numbers.
We write $\mathcal L(\mathbb R^d,\mathbb R^m)$ for the space of
linear maps from $\mathbb R^d$ to $\mathbb R^m$ and
$X_{s,t}:=X_t-X_s$ for path increments.
The space of $\alpha$-H\"older paths is denoted by
$C^\alpha([0,T];\mathbb R^d)$, with H\"older seminorm
$\|\cdot\|_{\alpha;[0,T]}$. Two-parameter increments are defined
on $\Delta_T:=\{(s,t)\in[0,T]^2:s\le t\}$.

\section{Controlled-driven  rough differential equations}\label{sec:preliminaries}
This section recalls the basic framework of controlled rough paths, controlled rough integration, and controlled-driven rough differential equations, together with the well-posedness results needed for the controlled Runge-Kutta methods developed below.

\begin{definition}\cite{JDT2022, FH2020}\label{def:rough-path}
An \emph{$\alpha$-H\"older rough path} over $\mathbb{R}^d$ is a pair
$
\mathbf{X} = (X, \mathbb{X})$ with $X \in C^{\alpha}([0, T];\mathbb{R}^d)$ and $\mathbb X:\Delta_T\to\mathbb R^{d\times d}$
satisfying the Chen identity:
\begin{equation*} 
\mathbb{X}_{s,t} = \mathbb{X}_{s,u} + \mathbb{X}_{u,t} + X_{s,u} \otimes X_{u,t},\qquad \forall 0 \le s \le u \le t \le T.
\end{equation*}
The set of all such $\mathbf{X}$ is denoted by $\mathcal{C}^\alpha([0,T];\mathbb{R}^d)$.
We define the norm of a rough path by
$$
\|\mathbf{X}\|_{\alpha;[0,T]} := \|X\|_{\alpha;[0,T]} + \|\mathbb{X}\|_{2\alpha;[0,T]}, $$
where 
$$\|X\|_{\alpha ;[0, T]}=\sup _{0 \leq  s<t \leq T} \frac{\left|X_{s , t}\right|}{|t-s|^\alpha},\qquad \|\mathbb{X}\|_{2 \alpha ;[0, T]}=\sup _{0 \leq s<t \leq T} \frac{\left|\mathbb{X}_{s, t}\right|}{|t-s|^{2 \alpha}}.$$
\end{definition}

\begin{definition}\cite{Gubinelli2004}\label{def:controlled-path}
Let $\mathbf{X} = (X,\mathbb{X}) \in \mathcal{C}^\alpha([0,T];\mathbb{R}^d)$.
For \(Y \in C^\alpha([0,T];\mathbb{R}^e)\) and \(Y'\in C^\alpha([0,T];\mathbb{R}^{e\times d})\), the pair $\mathbf{Y}:= (Y, Y')$ is called an \emph{$\bX$-controlled rough path} if $\|R^Y\|_{2\alpha;[0,T]} <\infty$, where 
the remainder $R^Y : \Delta_T \to \mathbb{R}^e $ is given by 
\begin{equation*}
Y_{s,t} = Y'_s X_{s,t} + R^Y_{s,t}, \qquad 0 \le s \le t \le T.
\end{equation*}
\end{definition}

In this case, $Y$ is said to be \emph{controlled by $X$}, and $Y'$ is called the \emph{Gubinelli derivative} of $Y$ with respect to $X$.
The space of all $\bX$-controlled rough paths $\mathbf{Y}$ taking values in $\mathbb{R}^e$ is denoted by
\(
\mathcal{D}_\bX^\alpha([0,T];\mathbb{R}^e),
\)
and is equipped with the norm \cite{A23}  
\begin{equation}\label{eq:controlled-norm}
\|(Y,Y')\|_{X,\alpha;[0,T]}
:=
|Y_0| + |Y'_0| + \|Y'\|_{\alpha;[0,T]} + \|R^Y\|_{2\alpha;[0,T]},
\end{equation}
where $|Y_0|$ denotes the Euclidean norm of $Y_0\in\mathbb R^e$
and $|Y'_0|$ denotes the operator norm of
$Y'_0\in\mathcal L(\mathbb R^d,\mathbb R^e)$. The term $\left|Y_0^{\prime}\right|$ is included to ensure that the controlled path norm also bounds the Gubinelli derivative $Y^{\prime}$ on $[0, T]$.

For
$A\in \mathcal{L}(\mathbb{R}^d,\mathcal{L}(\mathbb{R}^{m},\mathbb{R}^e))$ and
$B\in \mathcal{L}(\mathbb{R}^d,\mathbb{R}^{m}),$
we use the notation~\cite[Remark 4.12]{FH2020}
\begin{equation}\label{E2-3}
(A\diamond B)(v\otimes \tilde v):=A(v)\bigl(B\tilde v\bigr),
\qquad  v,\tilde v\in \mathbb{R}^d.
\end{equation}
for the contraction appearing in the controlled rough integral.

\begin{lemma} \cite{FH2020, Gubinelli2004}
\label{thm:controlledintegral}
Let $\bX\in \mathcal{C}^\alpha([0,T];\mathbb{R}^d),$ 
$
(H,H') \in \mathcal{D}_\bX^\alpha([0,T];\mathcal{L}(\mathbb{R}^{m},\mathbb{R}^{e}))$
and
$
\bZ=(Z,Z') \in \mathcal{D}_\bX^\alpha([0,T];\mathbb{R}^{m}).
$
Then, for every $0\le s\le t\le T$, the limit 
\begin{equation}\label{eq:controlled-integral-definition}
\sint_s^t H_r\,d\bZ_r
:=
\lim_{|P|\to 0}
\sum_{[u,v]\in P}
\left(
H_u Z_{u,v}
+
(H'_u \diamond Z'_u)\mathbb{X}_{u,v}
\right) \in \mathbb{R}^{e}
\end{equation}
exists and is independent of the partition $P$ of $[s,t]$.
Moreover, there exists a constant $C_{\alpha,T}>0$ such that
\begin{align*}
&\left|
\sint_s^t H_r\,d\bZ_r
-
H_s Z_{s,t}
-
(H'_s\diamond Z'_s)\mathbb{X}_{s,t}
\right|\\ \nonumber
&\ \ \le
C_{\alpha,T}
\bigl(1+\|\mathbf{X}\|_{\alpha;[0,T]}\bigr)^2
\|(H,H')\|_{X,\alpha;[0,T]}
\|(Z,Z')\|_{X,\alpha;[0,T]}
|t-s|^{3\alpha}.
\end{align*}
We call $\sint_s^t H_r\,d\bZ_r$  the {\it controlled rough integral of $H$ against $\bZ$.}
\end{lemma}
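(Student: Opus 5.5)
We will prove the statement with the sewing lemma applied to the two-parameter germ
\[
\Xi_{u,v}:=H_uZ_{u,v}+(H'_u\diamond Z'_u)\mathbb X_{u,v},\qquad (u,v)\in\Delta_T .
\]
The sewing lemma says the following. Suppose $\delta\Xi_{u,w,v}:=\Xi_{u,v}-\Xi_{u,w}-\Xi_{w,v}$ satisfies $|\delta\Xi_{u,w,v}|\le K|v-u|^{3\alpha}$ with $3\alpha>1$. Then the Riemann sums $\sum_{[u,v]\in P}\Xi_{u,v}$ converge as $|P|\to0$, the limit $\mathcal I_{s,t}$ is additive and independent of the partition, and $|\mathcal I_{s,t}-\Xi_{s,t}|\le C_\alpha K|t-s|^{3\alpha}$. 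Existence, partition independence and the local estimate therefore all reduce to bounding $\delta\Xi$ by
\[
K\lesssim (1+\|\mathbf X\|_{\alpha;[0,T]})^2\,\|(H,H')\|_{X,\alpha;[0,T]}\,\|(Z,Z')\|_{X,\alpha;[0,T]}.
\]

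\textbf{Computing $\delta\Xi$.} We use Chen's identity $\mathbb X_{u,v}-\mathbb X_{u,w}-\mathbb X_{w,v}=X_{u,w}\otimes X_{w,v}$ together with $Z_{u,v}=Z_{u,w}+Z_{w,v}$. This gives
\[
\delta\Xi_{u,w,v}=-H_{u,w}Z_{w,v}-(H'_u\diamond Z'_u)\mathbb X_{w,v}+(H'_w\diamond Z'_w)\mathbb X_{w,v}+(H'_u\diamond Z'_u)(X_{u,w}\otimes X_{w,v}).
\]
Next substitute $H_{u,w}=H'_uX_{u,w}+R^H_{u,w}$ and $Z_{w,v}=Z'_wX_{w,v}+R^Z_{w,v}$. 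The leading term $(H'_uX_{u,w})(Z'_wX_{w,v})$ does not quite cancel against $(H'_u\diamond Z'_u)(X_{u,w}\otimes X_{w,v})$, since one contains $Z'_w$ and the other $Z'_u$. By the definition of $\diamond$, their difference is $H'_u(X_{u,w})\,Z'_{u,w}X_{w,v}$. What remains is:
\begin{itemize}
\item $H'_u(X_{u,w})\,Z'_{u,w}X_{w,v}$;
\item $H'_u(X_{u,w})R^Z_{w,v}$;
\item $R^H_{u,w}Z'_wX_{w,v}$;
\item $R^H_{u,w}R^Z_{w,v}$;
\item $\bigl((H'_w\diamond Z'_w)-(H'_u\diamond Z'_u)\bigr)\mathbb X_{w,v}$.
\end{itemize}
For the last term we write $H'_wZ'_w-H'_uZ'_u=H'_{u,w}Z'_w+H'_uZ'_{u,w}$.

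\textbf{Bounding each term.} Every term has total homogeneity at least $3\alpha$:
\begin{itemize}
\item $\alpha+\alpha+\alpha$ for the first term;
\item $\alpha+2\alpha$ for the second, third and fifth terms;
\item $2\alpha+2\alpha$ for the fourth term, which is bounded by $T^{\alpha}|v-u|^{3\alpha}$.
\end{itemize}

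\textbf{Main obstacle: sup-norm bookkeeping.} The norm \eqref{eq:controlled-norm} contains only $|Y'_0|$, so we need $\sup_t|H'_t|\le|H'_0|+T^\alpha\|H'\|_\alpha$, and the same for $Z'$. This is where the $T$-dependence of $C_{\alpha,T}$ enters.

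The norm of $\mathbf X$ appears through $\|X\|_\alpha$ and $\|\mathbb X\|_{2\alpha}$, at most quadratically: the first term is bounded by $|H'_u|\,\|X\|_\alpha^2\,\|Z'\|_\alpha$. Every other term carries at most one factor of $\|X\|_\alpha$ or $\|\mathbb X\|_{2\alpha}$. Each term is a product of one $H$-quantity and one $Z$-quantity, both dominated by the respective controlled norms. Hence $K\le C_{T}(1+\|\mathbf X\|)^2\|(H,H')\|\|(Z,Z')\|$, which yields the stated constant.

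\textbf{Conclusion.} Apply the sewing lemma. Its proof is the standard dyadic argument: removing the partition point that minimises $|v_{i+1}-v_{i-1}|$ changes the sum by at most $K(2/(n-1))^{3\alpha}|t-s|^{3\alpha}$, and these changes sum to a convergent series because $3\alpha>1$. This gives the existence of the limit in \eqref{eq:controlled-integral-definition}, its independence of $P$, and the estimate with $C_{\alpha,T}$.
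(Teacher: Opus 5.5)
Your proof is correct and is the standard sewing-lemma argument from the cited references, applied to exactly the germ $H_uZ_{u,v}+(H'_u\diamond Z'_u)\mathbb X_{u,v}$; the paper itself gives no proof and simply defers to \cite{FH2020, Gubinelli2004}. There is one harmless sign slip: Chen's identity gives $\delta\Xi_{u,w,v}=-H_{u,w}Z_{w,v}+(H'_u\diamond Z'_u)(X_{u,w}\otimes X_{w,v})-\bigl((H'_w\diamond Z'_w)-(H'_u\diamond Z'_u)\bigr)\mathbb X_{w,v}$, so the $\mathbb X_{w,v}$ term has the opposite sign to what you wrote, but only its absolute value enters your bound.
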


The above result shows that the controlled rough integral admits the local expansion
\[
\sint_s^t H_r\,d\bZ_r
=
H_s Z_{s,t}
+
(H'_s\diamond Z'_s)\mathbb X_{s,t}
+
O(|t-s|^{3\alpha}).
\]
This expansion underlies the numerical methods developed in Sections~\ref{sec:runge-kutta-methods} and~\ref{sec:simplified-rk}.

\begin{lemma} \cite{LiGao2026}
\label{prop:integral-controlled}
Let
$
I_t:=\sint_0^t H_r\,d\bZ_r$ with
$ 0\le t\le T.$
Then
\(
(I,I')\in \mathcal{D}_\bX^\alpha([0,T];\mathbb{R}^e)\), where \(I'_t:=H_tZ'_t\) and the product $H_tZ'_t$ denotes the composition of linear maps, namely
\(
(H_tZ'_t)v:=H_t(Z'_t v)\)
for \(v\in \mathbb{R}^d.
\)
\end{lemma}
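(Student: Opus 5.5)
We must show that $I'$ is $\alpha$-H\"older and that the remainder
\[
R^I_{s,t}:=I_{s,t}-H_sZ'_sX_{s,t}
\]
has finite $2\alpha$-H\"older seminorm. The estimate of Lemma~\ref{thm:controlledintegral} does most of the work. We first record the additivity $I_{s,t}=\sint_s^t H_r\,d\bZ_r$. It follows from the partition definition \eqref{eq:controlled-integral-definition}: the Riemann sums over $[0,t]$ may be taken along partitions containing $s$, so they split into a sum over $[0,s]$ and a sum over $[s,t]$. Independence of the partition then gives the additivity.

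\textbf{Regularity of $I'$.} Write
\[
I'_{s,t}=H_{s,t}Z'_t+H_sZ'_{s,t}.
\]
Both $H$ and $Z'$ are $\alpha$-H\"older. They are also bounded, since $\sup_t|H_t|\le |H_0|+\|H\|_{\alpha}T^\alpha$, with $\|H\|_\alpha\le |H_0'|\,\|X\|_\alpha+\|H'\|_{\alpha}T^\alpha\|X\|_\alpha+\|R^H\|_{2\alpha}T^{\alpha}$; the same bound holds for $Z'$. Hence $\|I'\|_{\alpha}\le \|H\|_\infty\|Z'\|_\alpha+\|H\|_\alpha\|Z'\|_\infty<\infty$. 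The same bounds show that $I$ itself is $\alpha$-H\"older, once the remainder estimate below is in place.

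\textbf{The remainder.} We decompose
\[
R^I_{s,t}=\Bigl(\sint_s^t H_r\,d\bZ_r-H_sZ_{s,t}-(H'_s\diamond Z'_s)\mathbb X_{s,t}\Bigr)+H_sR^Z_{s,t}+(H'_s\diamond Z'_s)\mathbb X_{s,t},
\]
using $Z_{s,t}=Z'_sX_{s,t}+R^Z_{s,t}$. We bound the three terms in turn.
\begin{itemize}
\item By Lemma~\ref{thm:controlledintegral}, the first term is $O(|t-s|^{3\alpha})\le T^\alpha O(|t-s|^{2\alpha})$.
\item The second term is at most $\|H\|_\infty\|R^Z\|_{2\alpha}|t-s|^{2\alpha}$.
\item The third term is at most $\|H'\|_\infty\|Z'\|_\infty\|\mathbb X\|_{2\alpha}|t-s|^{2\alpha}$.
\end{itemize}
Therefore $\|R^I\|_{2\alpha}<\infty$, and together with the regularity of $I'$ this gives $(I,I')\in\mathcal D^\alpha_{\bX}$.

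The only delicate point is the additivity of the integral, which justifies applying the local expansion of Lemma~\ref{thm:controlledintegral} to the increments $I_{s,t}$. All other steps are routine bounds combining sup norms and H\"older seminorms.
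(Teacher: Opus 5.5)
The paper gives no proof of this lemma; it imports it from \cite{LiGao2026}. Your argument is correct and is the standard proof for rough integrals (cf.\ \cite{FH2020}) adapted to a controlled integrator: additivity via partitions through $s$, the splitting $I'_{s,t}=H_{s,t}Z'_t+H_sZ'_{s,t}$, and the decomposition of $R^I_{s,t}$ into the $3\alpha$ local error from Lemma~\ref{thm:controlledintegral}, $H_sR^Z_{s,t}$, and $(H'_s\diamond Z'_s)\mathbb X_{s,t}$, with the only cosmetic slip being that $Z'$ is merely $\alpha$-H\"older rather than controlled, so its boundedness comes directly from $|Z'_t|\le|Z'_0|+\|Z'\|_{\alpha}T^\alpha$ rather than from ``the same bound'' as for $H$.
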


\label{subsec:controlled-driven-rdes}
 For $k\in \mathbb{Z}_+$, we let
$C_b^k(
\mathbb R^e;
\mathcal L(\mathbb{R}^{m},\mathbb R^e)
)$ be the space of $k$ times continuously
Fr\'echet differentiable maps $F:\mathbb{R}^e
\to\mathcal L(\mathbb{R}^{m},\mathbb R^e)$ satisfying
\[
\sup_{x\in \mathbb{R}^e}
\|D^jF(x)\|_{\mathcal L^j(\mathbb R^e;
\mathcal L(\mathbb{R}^{m},\mathbb R^e)
)}
<\infty,
\quad j=0,\ldots,k,
\]
with the convention that $$D^0F:=F, \qquad \mathcal L^0(\mathbb R^{e},\mathcal L(\mathbb{R}^{m},\mathbb R^e)
):=\mathcal L(\mathbb{R}^{m},\mathbb R^e).$$

Controlled paths are stable under smooth composition.
 
\begin{lemma} \cite{Gubinelli2004,LiGao2026}
\label{lem:composition}
Let
\((Y,Y')\in\mathcal D_\bX^\alpha([0,T];\mathbb R^e)\)
and
\(F\in C_b^2\bigl(
\mathbb R^e;
\mathcal L(\mathbb{R}^{m},\mathbb R^e)
\bigr)\).
Then $\bigl(F(Y),(F(Y))'\bigr)\in\mathcal D_\bX^\alpha
\bigl([0,T];
\mathcal L(\mathbb{R}^{m},\mathbb R^e)
\bigr)$ is an $\bX$-controlled rough path, where 
\begin{align*}
F(Y):[0,T] \rightarrow&  \mathcal L(\mathbb{R}^{m},\mathbb R^e), \qquad t\mapsto   F(Y_t)\\
(F(Y))':[0,T] \rightarrow& \mathcal L(\mathbb{R}^{d}, \mathcal L(\mathbb{R}^{m},\mathbb R^e)), \qquad t\mapsto DF(Y_t)Y'_t.
\end{align*}
\end{lemma}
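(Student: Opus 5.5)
We must show that $(F(Y),DF(Y)Y')$ is $\bX$-controlled. This means two things: both components are $\alpha$-H\"older, and the remainder
\[
R^{F(Y)}_{s,t}:=F(Y_t)-F(Y_s)-DF(Y_s)Y'_sX_{s,t}
\]
has finite $2\alpha$-H\"older seminorm. The plan is a direct Taylor expansion of $F$ along $Y$, combined with the controlled structure $Y_{s,t}=Y'_sX_{s,t}+R^Y_{s,t}$. Throughout, we use that $Y'$ is bounded on $[0,T]$, with $\sup_t|Y'_t|\le|Y'_0|+T^\alpha\|Y'\|_{\alpha}$; this is exactly why $|Y'_0|$ appears in the norm \eqref{eq:controlled-norm}.

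\textbf{H\"older regularity of the two components.}
\begin{itemize}
\item For $F(Y)$, the mean value theorem gives $|F(Y_t)-F(Y_s)|\le\|DF\|_\infty|Y_{s,t}|$. Since $Y\in C^\alpha$, so is $F(Y)$.
\item For the derivative, write
\[
DF(Y_t)Y'_t-DF(Y_s)Y'_s=\bigl(DF(Y_t)-DF(Y_s)\bigr)Y'_t+DF(Y_s)Y'_{s,t}.
\]
The first term is bounded by $\|D^2F\|_\infty|Y_{s,t}|\sup|Y'|$ and the second by $\|DF\|_\infty\|Y'\|_\alpha|t-s|^\alpha$. Both are $O(|t-s|^\alpha)$, using $F\in C_b^2$.
\end{itemize}

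\textbf{The remainder.} Use the second-order Taylor formula with integral remainder:
\[
F(Y_t)-F(Y_s)=DF(Y_s)Y_{s,t}+\int_0^1(1-\theta)D^2F(Y_s+\theta Y_{s,t})[Y_{s,t},Y_{s,t}]\,d\theta .
\]
Substituting $Y_{s,t}=Y'_sX_{s,t}+R^Y_{s,t}$ into the first-order term gives
\[
R^{F(Y)}_{s,t}=DF(Y_s)R^Y_{s,t}+\int_0^1(1-\theta)D^2F(Y_s+\theta Y_{s,t})[Y_{s,t},Y_{s,t}]\,d\theta .
\]
Each piece is estimated separately:
\begin{itemize}
\item the first is bounded by $\|DF\|_\infty\|R^Y\|_{2\alpha}|t-s|^{2\alpha}$;
\item the second is bounded by $\tfrac12\|D^2F\|_\infty|Y_{s,t}|^2\le\tfrac12\|D^2F\|_\infty\|Y\|_\alpha^2|t-s|^{2\alpha}$.
\end{itemize}
Here $\|Y\|_\alpha\le\sup|Y'|\,\|X\|_\alpha+\|R^Y\|_{2\alpha}T^\alpha$. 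Hence $\|R^{F(Y)}\|_{2\alpha}<\infty$.

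\textbf{Main obstacle.} The only delicate points are bookkeeping. First, $F$ is merely $C_b^2$, not $C_b^3$, so the remainder must be handled with the integral (or mean-value) form of Taylor's theorem and not expanded one order further. Second, the $\alpha$-H\"older norm of $Y$ and the supremum of $Y'$ must be controlled by the controlled-path norm \eqref{eq:controlled-norm} together with $\|X\|_\alpha$, as indicated above. With these in hand, the estimates are all routine.
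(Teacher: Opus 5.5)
Your proof is correct and is the standard second-order Taylor argument from the cited sources (Gubinelli; Friz--Hairer); the paper states this lemma with citations and gives no proof of its own. The decomposition $R^{F(Y)}_{s,t}=DF(Y_s)R^Y_{s,t}+\int_0^1(1-\theta)D^2F(Y_s+\theta Y_{s,t})[Y_{s,t},Y_{s,t}]\,d\theta$ is right, and so are your H\"older bounds on $F(Y)$, on $DF(Y)Y'$, and on the remainder.
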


We now introduce the controlled-driven rough differential equation. Unlike the standard rough differential equation driven directly by \(X\), the equation considered here is driven by a path \(Z\) controlled by \(X\). 

\begin{definition} \cite{LiGao2026} \label{def:controlled-driven-rde} Let $ (Z,Z')\in \mathcal{D}_\bX^\alpha([0,T];\mathbb{R}^{m})$
and
$
F\in C_b^2\bigl(\mathbb{R}^e;\mathcal{L}(\mathbb{R}^{m},\mathbb{R}^e)\bigr). $
For \(0<T_0\le T\), a pair $ (Y,Y')\in \mathcal{D}_\bX^\alpha([0,T_0];\mathbb{R}^e) $ is called \emph{a solution of the controlled-driven rough differential equation} 
\begin{equation}\label{eq:controlled-driven-rde} 
dY_t=F(Y_t)\,d\bZ_t,\quad Y_0=y_0\in\mathbb{R}^e,
\end{equation} 
on \([0,T_0]\) if it satisfies, for every \(0\le t\le T_0\),
\begin{equation}\label{eq:controlled-driven-fixed-point} 
Y_t = y_0+\sint_0^t F(Y_r)\,d\bZ_r, \qquad Y'_t = F(Y_t)Z'_t. 
\end{equation} 
\end{definition}

\begin{remark}
\begin{enumerate}
\item[\rm(i)]
If \(m=d\), \(Z_t=X_t\)  and \(Z'_t=\operatorname{Id}_{\mathbb R^d}\),
then \((Z,Z')=(X,\operatorname{Id}_{\mathbb R^d})\) and
\eqref{eq:controlled-driven-rde} reduces to the usual rough differential
equation
\[
dY_t=F(Y_t)d\bX_t,\qquad Y_0=y_0 .
\]
Moreover, \(Y'_t=F(Y_t)Z'_t=F(Y_t)\), which is the Gubinelli derivative of the path $Y$.

\item[\rm(ii)]
The integral in \eqref{eq:controlled-driven-fixed-point} is understood in
the sense of Lemma~\ref{thm:controlledintegral}. By Lemmas~
\ref{prop:integral-controlled} and \ref{lem:composition},
\(F(Y)\) is an admissible integrand with respect to \(Z\), and
\[
t\mapsto \sint_0^tF(Y_r)d\bZ_r
\]
is controlled by \(X\) with the Gubinelli derivative \(F(Y_t)Z'_t\).
\end{enumerate}
\end{remark}

We now recall the basic well-posedness results of~\meqref{eq:controlled-driven-rde}.

\begin{lemma} \cite[Theorem~3.9]{LiGao2026} \label{thm:local-well-posedness}
Let
\(
(Z,Z')\in \mathcal{D}_\bX^\alpha([0,T];\mathbb{R}^{m})\) and \(F\in C_b^3\bigl(\mathbb{R}^e;\mathcal{L}(\mathbb{R}^{m},\mathbb{R}^e)\bigr).
\) Then there exists \(\tau\in(0,T]\) such that the controlled-driven rough differential equation
\[
dY_t=F(Y_t)\,d\bZ_t,
\quad 
Y_0=y_0\in \mathbb{R}^e,
\]
admits a unique local solution
\(
(Y,Y')\in \mathcal{D}_\bX^\alpha([0,\tau];\mathbb{R}^e).
\)
Moreover, for any $0\le t\le \tau$,
\begin{equation}\label{eq:solution-gubinelli-derivative}
Y'_t=F(Y_t)Z'_t.
\end{equation}
\end{lemma}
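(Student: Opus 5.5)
We prove the statement by a Banach fixed point argument on a closed ball in the space of controlled paths with fixed initial data, on a short interval $[0,\tau]$. Set
\[
\mathcal B_\tau:=\bigl\{(Y,Y')\in\mathcal D^\alpha_\bX([0,\tau];\mathbb R^e):\ Y_0=y_0,\ Y'_0=F(y_0)Z'_0,\ \|Y'\|_{\alpha;[0,\tau]}+\|R^Y\|_{2\alpha;[0,\tau]}\le 1\bigr\}.
\]
This is a complete metric space for the distance induced by \eqref{eq:controlled-norm}. On it we define the map
\[
\mathcal M_\tau(Y,Y'):=\Bigl(y_0+\sint_0^{\cdot}F(Y_r)\,d\bZ_r,\ F(Y_\cdot)Z'_\cdot\Bigr).
\]
By Lemma~\ref{lem:composition}, the pair $(F(Y),DF(Y)Y')$ is controlled. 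Lemma~\ref{thm:controlledintegral} therefore makes the integral well defined. Lemma~\ref{prop:integral-controlled} shows that the image lies in $\mathcal D^\alpha_\bX$, with Gubinelli derivative $F(Y_t)Z'_t$. Consequently, any fixed point is a solution in the sense of Definition~\ref{def:controlled-driven-rde}, and \eqref{eq:solution-gubinelli-derivative} holds automatically.

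\textbf{Invariance.} We estimate the image seminorms by decomposing the remainder of the integral $I$ as
\[
R^I_{s,t}=\Bigl(\sint_s^t F(Y)\,d\bZ-F(Y_s)Z_{s,t}-(DF(Y_s)Y'_s\diamond Z'_s)\mathbb X_{s,t}\Bigr)+F(Y_s)R^Z_{s,t}+(DF(Y_s)Y'_s\diamond Z'_s)\mathbb X_{s,t}.
\]
The first term is $O(\tau^{\alpha}|t-s|^{2\alpha})$ by the $3\alpha$ bound of Lemma~\ref{thm:controlledintegral}. The controlled norm of $F(Y)$ is bounded in terms of $\|F\|_{C^2_b}$ and of the ball radius, which uses the standard composition estimate behind Lemma~\ref{lem:composition}.

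The terms $F(Y_s)R^Z_{s,t}$ and $\mathbb X_{s,t}$ are only $O(|t-s|^{2\alpha})$ without a small factor. This is the usual difficulty, and we handle it in the usual way. We work with the weaker H\"older exponent $\beta<\alpha$ (with $\beta>1/3$) for the contraction, or equivalently we exploit the fact that $\|R^Z\|_{2\alpha;[0,\tau]}|t-s|^{2\alpha}\le \tau^{\alpha-\beta}\cdots|t-s|^{2\beta}$ only after lowering the exponent. Alternatively, and more simply, we enlarge the ball radius to a constant $M$ depending on $\|F\|_{C^2_b}$, $|Z'_0|$, $\|(Z,Z')\|$ and $\|\bX\|$, so that the non-small contributions fit inside it. The same treatment applies to the derivative seminorm, since
\[
\|F(Y)Z'\|_\alpha\le \|F\|_{C^1_b}\|Y\|_\alpha\,\|Z'\|_\infty+\|F\|_\infty\|Z'\|_\alpha,
\]
and $\|Y\|_{\alpha}\le |Y'_0|+\tau^\alpha(\cdots)$.

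\textbf{Contraction.} For two elements of the ball we control the difference of the images. The difference $F(Y)-F(\tilde Y)$ is a controlled path whose norm is bounded by $C\|(Y-\tilde Y,Y'-\tilde Y')\|$. This step uses $F\in C^3_b$, because the Gubinelli derivative $DF(Y)Y'$ must be Lipschitz in controlled norm, which requires $D^2F$ to be Lipschitz.

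Bilinearity of the integral then gives a bound for the difference of the remainder and derivative parts. The key point is that both differences vanish at time $0$, so the non-small terms now carry a factor $\tau^\alpha$. For example,
\[
|F(Y_s)-F(\tilde Y_s)|\le C\sup|Y-\tilde Y|\le C\tau^\alpha\|Y-\tilde Y\|_\alpha .
\]
In the same way, $(DF(Y_s)Y'_s-DF(\tilde Y_s)\tilde Y'_s)$ is bounded by $\tau^\alpha$ times the distance. Hence
\[
d\bigl(\mathcal M_\tau(Y),\mathcal M_\tau(\tilde Y)\bigr)\le C_M\,\tau^{\alpha}\,d(Y,\tilde Y)
\]
(possibly with $\tau^{\alpha-\beta}$ if exponents are lowered). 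Choosing $\tau$ small gives a contraction, and the fixed point yields existence.

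\textbf{Uniqueness and main obstacle.} Uniqueness among all controlled solutions follows by applying the contraction estimate on a possibly smaller interval, chosen so that both solutions lie in a common ball; this is possible since any solution has finite norm. The norms can then be extended by continuity, and the argument iterated. The main obstacle is the invariance step: ensuring that the self-map estimate closes despite the $O(1)$ contributions from $R^Z$ and $\mathbb X$. I would first try the fixed-large-radius argument, and fall back on the exponent-lowering trick $\beta<\alpha$ if needed.
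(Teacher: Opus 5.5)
The paper does not prove this lemma; it imports it from \cite[Theorem~3.9]{LiGao2026}, so there is no in-paper argument to compare with. Your Banach fixed-point argument is the standard proof for such equations, and it is correct.

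In particular, the fixed-large-radius route closes on its own, so you do not need the exponent-lowering fallback. The contributions without a small factor are $\|F\|_\infty\|R^Z\|_{2\alpha}$, $\|F\|_\infty\|Z'\|_\alpha$, the $DF(Y)\cdot Z'$ term and the $\mathbb X$-term. They see $(Y,Y')$ only through $\sup_s|Y'_s|\le |F(y_0)Z'_0|+\tau^\alpha M$ and $\|Y\|_\alpha\le |Y'_0|\,\|X\|_\alpha+\tau^\alpha(\cdots)$; you dropped the factor $\|X\|_\alpha$ in the latter. The sewing error is at most $C(1+M)^2\tau^\alpha$. So you can take $M$ larger than twice the $M$-independent part, and then choose $\tau$ with $C(1+M)^2\tau^\alpha\le M/2$.

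For uniqueness, no invariance is needed. The contraction estimate alone, on a ball of radius $\max\{M,\|(\tilde Y,\tilde Y')\|_{X,\alpha;[0,\tau]}\}$ over a short interval, forces $Y=\tilde Y$ there. To extend to $[0,\tau]$, take the supremum $t^*$ of coincidence times and restart at $t^*$. This uses additivity of the controlled integral and the matching initial data $Y'_{t^*}=F(Y_{t^*})Z'_{t^*}=\tilde Y'_{t^*}$.
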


\begin{lemma} \cite[Theorem~3.11]{LiGao2026}
\label{cor:global-well-posedness}
The local solution as in Lemma~\ref{thm:local-well-posedness} extends uniquely to a solution
\(
(Y,Y')\in \mathcal{D}_\bX^\alpha([0,T];\mathbb{R}^e)
\)
on the whole interval $[0,T]$. Moreover, for any $0\le t\le T$,
\(
Y'_t=F(Y_t)Z'_t\).
\end{lemma}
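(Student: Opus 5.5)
The natural route is to patch together local solutions given by Lemma~\ref{thm:local-well-posedness}. The key requirement is that the local existence time $\tau$ can be chosen independently of the initial value $y_0$. Once that is in place, finitely many patching steps cover $[0,T]$. I would first return to the fixed-point argument behind Lemma~\ref{thm:local-well-posedness}. On an interval $[s,s+\tau]$, consider the map
\[
\mathcal M(Y,Y')_t=\Bigl(y_s+\sint_s^t F(Y_r)\,d\bZ_r,\;F(Y_t)Z'_t\Bigr).
\]
Using Lemma~\ref{thm:controlledintegral}, Lemma~\ref{prop:integral-controlled} and Lemma~\ref{lem:composition}, I would show that the controlled norm of the output is bounded by quantities depending only on $\|F\|_{C_b^3}$, $\|\mathbf X\|_{\alpha;[0,T]}$ and $\|(Z,Z')\|_{X,\alpha;[0,T]}$, with a factor $\tau^{\alpha}$ (or $\tau^{\alpha}$ times a polynomial in these norms) in front of the nonlinear contributions.

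The crucial observation is that the starting point $y_s$ enters only through $|Y_s|$ in the norm \eqref{eq:controlled-norm}. The quantities $Y'$, $R^Y$ and $F(Y)$ are controlled through the bounded derivatives of $F$, not through the size of $y_s$. To make the contraction argument uniform, I would therefore work on the closed set of paths with $Y_s=y_s$ and $Y'_s=F(y_s)Z'_s$, using the seminorm $\|Y'\|_{\alpha}+\|R^Y\|_{2\alpha}$. On this set $\mathcal M$ is invariant on a ball whose radius depends only on the data listed above, and it is a contraction once $\tau\le\tau_0$, where $\tau_0$ is independent of $y_s$. The contraction estimate uses the $C_b^3$ bound to get a Lipschitz estimate for $F(Y)-F(\tilde Y)$ in the controlled norm. \textbf{I expect this uniformity of $\tau_0$ to be the main obstacle.} The difficulty is tracking that every constant from Lemma~\ref{thm:controlledintegral} on subintervals is bounded by the corresponding global constant on $[0,T]$. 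This holds because the Hölder seminorms on subintervals are dominated by those on $[0,T]$.

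With $\tau_0$ in hand, I would set $t_k=k\tau_0\wedge T$. I would then solve successively on $[t_k,t_{k+1}]$ with initial value $Y_{t_k}$ given by the previous piece, and concatenate. To verify the concatenated pair is a solution on $[0,T]$, I would use the additivity of the controlled integral: $\sint_0^t=\sint_0^{t_k}+\sint_{t_k}^t$, which follows from the partition-limit definition \eqref{eq:controlled-integral-definition}. The relation $Y'_t=F(Y_t)Z'_t$ holds piecewise, hence everywhere. For the controlled-path property on $[0,T]$, I would show that the remainder of the concatenation is $2\alpha$-Hölder. For $s<u<t$ with $u$ a junction point,
\[
R^Y_{s,t}=R^Y_{s,u}+R^Y_{u,t}+Y'_{s,u}X_{u,t}.
\]
Iterating this identity over finitely many junctions gives $\|R^Y\|_{2\alpha;[0,T]}<\infty$, and similarly $Y'$ is globally $\alpha$-Hölder.

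Uniqueness follows by induction over the grid. Two solutions agree on $[0,t_1]$ by the local uniqueness in Lemma~\ref{thm:local-well-posedness}. Their restrictions to $[t_k,t_{k+1}]$ then solve the same equation with the same initial value, so local uniqueness applies again on each subinterval.
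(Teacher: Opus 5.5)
The paper gives no proof of this lemma; it is quoted from \cite[Theorem~3.11]{LiGao2026}, so there is no in-paper argument to compare against. Your route is the standard global-existence argument for equations with $C_b^3$ coefficients (cf.\ \cite{FH2020}), and it is sound.

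\textbf{Why the uniform existence time works.} On the affine set $\{Y_s=y_s,\ Y'_s=F(y_s)Z'_s\}$, the non-small contributions to $\|\mathcal M(Y)'\|_{\alpha}+\|R^{\mathcal M(Y)}\|_{2\alpha}$ involve only bounds on $F$, $DF$, $\sup|Z'|$, $\|Z'\|_\alpha$, $\|R^Z\|_{2\alpha}$ and $\|\mathbf X\|_\alpha$. Typical examples are $\|F\|_\infty\|R^Z\|_{2\alpha}$ and $\|DF\|_\infty\|F\|_\infty(\sup|Z'|)^2\|\mathbb X\|_{2\alpha}$; none of them involves $y_s$. For two elements of this set, the difference $\Delta=Y-\tilde Y$ satisfies $\Delta_s=0$ and $\Delta'_s=0$. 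Hence $\sup|\Delta'|\le\tau^\alpha\|\Delta'\|_\alpha$ and $\|\Delta\|_\alpha\lesssim\tau^\alpha(\|\Delta'\|_\alpha+\|R^\Delta\|_{2\alpha})$. This is exactly where the contraction factor comes from, uniformly in $y_s$. The concatenation step is also correct: it uses additivity of the controlled integral, the matching of $Y'_{t_k}=F(Y_{t_k})Z'_{t_k}$ from both sides, and the identity $R^Y_{s,t}=R^Y_{s,u}+R^Y_{u,t}+Y'_{s,u}X_{u,t}$ across finitely many junctions.

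\textbf{The step to tighten is uniqueness.} Your contraction yields uniqueness only inside the invariant ball. Lemma~\ref{thm:local-well-posedness} gives uniqueness on some $[0,\tau]$ whose length is not tied to your $\tau_0$ and may depend on the initial value. So the fixed grid $t_k=k\tau_0$ does not by itself give agreement of two arbitrary solutions on each $[t_k,t_{k+1}]$.

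A continuation argument avoids this. Suppose $t^*:=\sup\{t:\ Y=\tilde Y\text{ on }[0,t]\}<T$. By continuity, $Y_{t^*}=\tilde Y_{t^*}$, and therefore $Y'_{t^*}=F(Y_{t^*})Z'_{t^*}=\tilde Y'_{t^*}$. Both restrictions to a short interval after $t^*$ solve the equation restarted at $t^*$. Local uniqueness then makes them agree beyond $t^*$, contradicting the definition of $t^*$.
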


\section{Controlled B-series expansions}\label{sec:algebraic-framework}
This section develops the controlled B-series framework used in the analysis of controlled Runge-Kutta methods. We introduce the rooted-tree structure, elementary differentials, and controlled coefficients, and then use them to derive the exact controlled B-series expansion of the solution of the controlled-driven rough differential equation.

We begin with the underlying tree and forest notation. Let $\mathcal{A}:=\{1,\ldots,m\}$ be the set of decorations, and let $\mathcal{T}$ denote the set of rooted trees whose vertices are decorated by elements of $\mathcal{A}$. Let $\mathcal{F}^0$ be the free commutative monoid generated by $\mathcal{T}$, with unit $\etree$ representing the empty forest. We set \[ \mathcal{T}^0:=\mathcal{T}\sqcup\{\etree\}, \qquad \mathcal{F}:=\mathcal{F}^0\setminus\{\etree\}. \]
For a forest
\(
\eta=\tau_1\cdots\tau_n\in\mathcal F^0
\)
and a decoration \(a\in \mathcal A\), define
$
[\eta]_a=[\tau_1\cdots\tau_n]_a
$
to be the rooted tree obtained by adding a new root decorated by \(a\)
and connecting it to the roots of \(\tau_1,\ldots,\tau_n\). In
particular,
$
[\etree]_a=\bullet_a
$
is the one-vertex tree with decoration \(a\).
The order of a forest is the total number of vertices, defined
recursively by
\begin{equation}\label{C3-1}
|\etree|:=0,
\qquad
|\tau_1\cdots\tau_n|
:=
\sum_{i=1}^n|\tau_i|,
\qquad
|[\eta]_a|
:=
|\eta|+1.
\end{equation}
For a truncation level \(N\in\mathbb N\), define
\[
\mathcal T_{\leq N}
:=
\{\tau\in\mathcal T:|\tau|\leq N\},
\qquad
\mathcal F_{\leq N}
:=
\{\eta\in\mathcal F:|\eta|\leq N\}.
\]

We next recall the symmetry factor and the tree factorial, which are
standard in B-series theory and the tree calculus for Runge-Kutta
methods.

\begin{definition}\cite{Butcher1963,RR2022}
\label{def:symmetry-factor}
The \emph{symmetry factor} $\sigma:\mathcal T^0\to\mathbb N$ and the \emph{tree factorial} $\gamma:\mathcal T^0\to\mathbb N$ are recursively defined by
\begin{align*}
\sigma(\etree):=&\ \sigma(\bullet_a):=1,
\qquad
\sigma(\tau):=
\prod_{j=1}^{\ell}
m_j!\sigma(\rho_j)^{m_j},\\
\gamma(\etree):=&\ \gamma(\bullet_a):=1,
\qquad
\gamma(\tau)
:=
|\tau|
\prod_{i=1}^{k}\gamma(\tau_i),
\end{align*}
where
\[
\tau=[\tau_1\cdots\tau_k]_a
=
[\rho_1^{m_1}\cdots\rho_\ell^{m_\ell}]_a,
\qquad a\in\mathcal A,
\]
with \(\rho_1,\ldots,\rho_\ell\) distinct and \(m_1,\ldots,m_\ell\) their
multiplicities.  
\end{definition}

Recall in~\eqref{eq:controlled-driven-rde} that $Y$ takes values in \(\mathbb R^e\) and that the controlled driver is
\[
Z=(Z^1,\ldots,Z^m):[0,T]\to\mathbb R^m.
\]
Let
\(
F:\mathbb R^e\to\mathcal L(\mathbb R^m,\mathbb R^e),
\)
and let $e_1,\ldots,e_m$ denote the canonical basis of $\mathbb R^m$.
For each $y\in\mathbb R^e$, define
\begin{equation}\label{eq:component-vector-fields}
F_a(y):=F(y)e_a,
\qquad a\in\mathcal A.
\end{equation}
Then \(F_a:\mathbb R^e\to\mathbb R^e\) is the vector field associated with the
\(a\)-th driving channel. Since
$
dZ_t=\sum_{a=1}^m e_a\,dZ_t^a,
$
the rough differential equation
\[
dY_t=F(Y_t)\,d\bZ_t,
\qquad
Y_0=y_0,
\]
is equivalently written in component form as
\begin{equation}\label{eq:component-controlled-rde}
dY_t
=
\sum_{a=1}^m F_a(Y_t)\,d\bZ_t^a,
\qquad
Y_0=y_0.
\end{equation}

Next, we recall the elementary differentials associated with the vector fields
\(F_1,\ldots,F_m\).

\begin{definition}\cite{RR2022}\label{def:elementary-differentials}
Let
$
F_1,\ldots,F_m:\mathbb{R}^e\to\mathbb{R}^e
$  
be the vector fields as in \eqref{eq:component-vector-fields}.
For $a\in\mathcal A$,
\(y\in\mathbb{R}^e\) and \(\tau\in\mathcal{T}^0\), the elementary differential
\(F(\tau):\mathbb{R}^e\to\mathbb{R}^e\) is defined  by
\begin{equation*}
F(\etree)(y):=y,
\quad
F(\bullet_a)(y):=F_a(y),
\end{equation*}
and, for the tree $
\tau=[\tau_1\cdots\tau_n]_a,
$  $ n\ge 1,$
\begin{equation*}
F(\tau)(y)
:=
D^nF_a(y)
\bigl[
F(\tau_1)(y),\ldots,F(\tau_n)(y)
\bigr].
\end{equation*}
\end{definition}

We now define the coefficients recursively by controlled rough
integration with respect to the components of \(Z\). These coefficients
play the role of the tree coordinates used in branched rough paths
\cite{Gub2010,HK2015}.

\begin{definition}
\label{def:controlled-forest-coefficients}
Let $
\mathbf Z=(Z,Z')
\in \mathcal D_\bX^\alpha([0,T];\mathbb R^m) 
$. Fix \(s\in[0,T]\) and define the coefficient path
\[
Z^\eta_{s,\bigcdot}:[s,T]\rightarrow\mathbb R,
\qquad
t\mapsto Z^\eta_{s,t},
\]
recursively as follows.

\begin{enumerate}
\item[\rm(i)]
For the empty forest $\etree$, set 
$$
Z^\etree_{s,t}:=1\in \mathbb{R}.
$$

\item[\rm(ii)] 
If \(\eta=\tau_1\cdots\tau_n\in\mathcal F\) is a nonempty forest, set
$$
 Z^\eta_{s,t}
:=
\prod_{j=1}^n  Z^{\tau_j}_{s,t}\in \mathbb{R} .
$$

\item[\rm(iii)] If \(\tau=[\eta]_a\) is the tree obtained by grafting a forest
\(\eta\in\mathcal F^0\) onto a new root labelled by \(a\in\mathcal A\), set
\begin{equation}\label{eq:controlled-coefficient-tree}
 Z^\tau_{s,t}
:=
\sint_s^t  Z^\eta_{s,u}\,d\bZ_u^a \in \mathbb{R} .
\end{equation}
\end{enumerate}

The integral in \eqref{eq:controlled-coefficient-tree} is understood as the controlled rough integral in \meqref{eq:controlled-integral-definition}. 
\end{definition}

\begin{proposition}
\label{prop:well-posedness-controlled-coefficients}
Let
$
(Z,Z')
\in
\mathcal{D}_\bX^\alpha([0,T];\mathbb{R}^m).
$
Then, for any \(p\in\mathbb N\), there exists a constant
\(C_{p,T}>0\) such that, for $s \leq r \leq t \leq T$ and 
\(\eta\in\mathcal{F}_{\le p}\), the path $\mathbf Z^\eta_{s,\bigcdot} = (Z^\eta_{s,\bigcdot}, (Z^\eta_{s,\bigcdot})')$ is \(\bX\)-controlled  and satisfies  
\begin{equation}\label{eq:controlled-coefficient-size}
\begin{gathered}
| Z_{s,r}^{\eta}|
\le
C_{p,T}|r-s|^{|\eta|\alpha},
\qquad 
|(Z_{s,\bigcdot}^{\eta})'_r|
\le
C_{p,T}|r-s|^{(|\eta|-1)\alpha},
\\[2mm]
\|( Z_{s,\bigcdot}^{\eta})'\|_{\alpha;[s,t]}
+
\|R^{Z_{s,\bigcdot}^{\eta}}\|_{2\alpha;[s,t]}
\le
C_{p,T}|t-s|^{(|\eta|-2)_+\alpha}.
\end{gathered}
\end{equation}
where $$R_{u, v}^{Z_{s, \bigcdot}^\eta}:=Z_{s, v}^\eta-Z_{s, u}^\eta-\left(Z_{s, \bigcdot}^\eta\right)_u^{\prime} X_{u, v}.$$
\end{proposition}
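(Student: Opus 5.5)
The plan is a strong induction on the order $|\eta|$, carried out simultaneously for trees and forests. For each $\eta$ I will also identify the Gubinelli derivative explicitly:
\begin{itemize}
\item for the empty forest, $Z^{\etree}_{s,\bigcdot}\equiv 1$ with derivative $0$;
\item for a tree $\tau=[\eta]_a$, Lemma~\ref{prop:integral-controlled} (applied componentwise) gives $(Z^\tau_{s,\bigcdot})'_u=Z^\eta_{s,u}(Z^a)'_u$;
\item for a forest $\eta=\tau_1\cdots\tau_n$, the Leibniz rule gives $(Z^\eta_{s,\bigcdot})'_u=\sum_j (Z^{\tau_j}_{s,\bigcdot})'_u\prod_{i\ne j}Z^{\tau_i}_{s,u}$.
\end{itemize}
All constants may depend on $p$, $T$, $\|\bX\|_{\alpha}$ and $\|(Z,Z')\|_{X,\alpha}$. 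Because only finitely many forests have order $\le p$, a single constant $C_{p,T}$ can be taken at the end. The target exponents are $|\eta|\alpha$ for the value, $(|\eta|-1)\alpha$ for the derivative, and $(|\eta|-2)_+\alpha$ for $\|(\cdot)'\|_{\alpha}+\|R\|_{2\alpha}$. The key point is that these exponents are \emph{localized}: they are measured on $[s,t]$ from the base point $s$, using $Z^\eta_{s,s}=0$ for $\eta\ne\etree$.

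\textbf{Forest step.} Assume the three bounds hold for trees $\tau_1,\dots,\tau_n$ of order $<|\eta|$ or equal to it. The value bound $|Z^\eta_{s,r}|\le C|r-s|^{|\eta|\alpha}$ follows by multiplying. The derivative bound follows from the Leibniz formula, since $(|\tau_j|-1)+\sum_{i\ne j}|\tau_i|=|\eta|-1$.

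For the Hölder norm of the derivative, I split increments of products telescopically. Each increment of a factor over $[u,v]\subset[s,t]$ costs $|v-u|^\alpha$ times $\sup_{[s,t]}$ of a lower-order quantity, and each untouched factor contributes its sup on $[s,t]$.

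For the remainder I use the algebraic identity
\[
R^{fg}_{u,v}=R^f_{u,v}g_u+f_uR^g_{u,v}+f_{u,v}g_{u,v}.
\]
The cross term $f_{u,v}g_{u,v}$ is bounded by $\|f\|_\alpha\|g\|_\alpha|v-u|^{2\alpha}$, and $\|f\|_{\alpha;[s,t]}\le \sup|f'|\,\|X\|_\alpha+\|R^f\|_{2\alpha}|t-s|^{\alpha}$. Collecting exponents then gives $(|\eta|-2)_+\alpha$. When a factor is of order $1$, the relevant exponent is clipped at $0$, which is why the $(\cdot)_+$ appears.

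\textbf{Tree step.} Let $\tau=[\eta]_a$. If $\eta=\etree$, then $Z^\tau_{s,r}=Z^a_{s,r}$ and all bounds are immediate from $(Z,Z')$ being controlled. For $\eta\ne\etree$, I do not cite the bound of Lemma~\ref{thm:controlledintegral} as stated, because it is phrased with global norms and only yields $|r-s|^{3\alpha}$. Instead I rerun the sewing argument on $[s,r]$ with the germ
\[
\Xi_{u,v}=Z^\eta_{s,u}Z^a_{u,v}+(Z^\eta_{s,\bigcdot})'_u (Z^a)'_u\mathbb X_{u,v}.
\]
I will expand $\delta\Xi_{u,w,v}$ via Chen's relation into terms of the form
\[
R^{Z^\eta}_{u,w}Z^a_{w,v},\qquad (Z^\eta)'_{u,w}(Z^a)'_u\mathbb X_{w,v},\qquad (Z^\eta)'_w(Z^a)'_{u,w}\mathbb X_{w,v},
\]
together with a term combining $(Z^\eta)'_u$ and $R^{Z^a}_{u,w}$ against $X_{w,v}$. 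Each such term is bounded by an induction quantity on $[s,r]$ times $|v-u|^{3\alpha}$, giving
\[
\|\delta\Xi\|_{3\alpha;[s,r]}\le C|r-s|^{(|\eta|-2)_+\alpha}.
\]
The sewing lemma then yields
\[
|Z^\tau_{s,r}-\Xi_{s,r}|\le C|r-s|^{(|\eta|-2)_+\alpha+3\alpha}\le C|r-s|^{(|\eta|+1)\alpha}.
\]
Since $Z^\eta_{s,s}=0$, we have $\Xi_{s,r}=(Z^\eta)'_s(Z^a)'_s\mathbb X_{s,r}$. This is nonzero only if $|\eta|=1$, and then it is $O(|r-s|^{2\alpha})=O(|r-s|^{(|\tau|)\alpha})$.

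The derivative $(Z^\tau)'_r=Z^\eta_{s,r}(Z^a)'_r$ has size $|r-s|^{|\eta|\alpha}=|r-s|^{(|\tau|-1)\alpha}$. Its Hölder norm on $[s,t]$ is at most $\sup|Z^\eta|\,\|(Z^a)'\|_\alpha+\|Z^\eta\|_\alpha\sup|(Z^a)'|\le C|t-s|^{(|\eta|-1)\alpha}$, which is the required $(|\tau|-2)_+\alpha$.

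For the remainder I write
\[
R^{Z^\tau}_{u,v}=\bigl(Z^\tau_{s,v}-Z^\tau_{s,u}-\Xi_{u,v}\bigr)+Z^\eta_{s,u}R^{Z^a}_{u,v}+(Z^\eta)'_u(Z^a)'_u\mathbb X_{u,v}.
\]
The bracket is the same sewing estimate, now localized to $[u,v]$. The remaining two terms are bounded by $|t-s|^{|\eta|\alpha}|v-u|^{2\alpha}$ and $|t-s|^{(|\eta|-1)\alpha}|v-u|^{2\alpha}$ respectively.

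\textbf{Main obstacle.} The delicate step is the localized sewing estimate: showing that $\|\delta\Xi\|_{3\alpha;[s,r]}$ inherits the decay $|r-s|^{(|\eta|-2)_+\alpha}$, rather than the crude global constant. It requires careful bookkeeping of which factor carries the sup norm and which carries the Hölder seminorm, so that no exponent is lost. I expect the small-order cases $|\eta|\in\{1,2\}$, where the $(\cdot)_+$ clipping is active, to need separate checking.
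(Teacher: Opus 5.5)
Your proposal is correct and takes essentially the same route as the paper. Both use strong induction on $|\eta|$ with a Leibniz-rule step for forests of at least two trees. For a tree $\tau=[\eta]_a$, both use a controlled-integral (sewing) estimate localized to $[s,r]$ and $[u,v]$ with coefficient $C|t-s|^{(|\eta|-2)_+\alpha}$, together with $(Z^\tau_{s,\bigcdot})'=Z^\eta_{s,\bigcdot}(Z^a)'$.

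The only difference is packaging. The paper obtains the value bound from Lemma~\ref{lem:controlled-remainder-integration}, which tacitly relies on the localized form of Lemma~\ref{thm:controlledintegral}; you instead rerun the sewing argument explicitly. One small correction to your bookkeeping: in $\delta\Xi_{u,w,v}$ the term carrying $(Z^\eta_{s,\bigcdot})'_u$ is $(Z^\eta_{s,\bigcdot})'_uX_{u,w}\,(R^{Z^a}_{u,v}-R^{Z^a}_{u,w})$, not $R^{Z^a}_{u,w}$ paired with $X_{w,v}$. This does not change the $|v-u|^{3\alpha}$ count.
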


The proof of Proposition~\ref{prop:well-posedness-controlled-coefficients} needs the following technical lemma.

\begin{lemma}
\label{lem:controlled-remainder-integration}
Given \(0\le s<t\le T\), let $(R^s,(R^s)')\in
\mathcal D_\bX^\alpha([s,t];\mathbb R^e).$ Suppose that
\begin{enumerate}
\item[\rm(i)]
\(
R_s^s=0\), \(|R_r^s|
\le
M|r-s|^\beta\) and \(
|(R^s)'_r|
\le
M|r-s|^{\beta-\alpha},\,\beta \geq \alpha
\)
for every \(r\in[s,t]\), where  \(M\in \RR_{>0}\) is a positive constant, independent of \(s\) and \(t\).
\item[\rm(ii)]
\(
\|(R^s)'\|_{\alpha;[s,t]}
+
\|\mathcal R^{R^s}\|_{2\alpha;[s,t]}
\le
M|t-s|^{(\beta-2\alpha)_+}
\)
for every \(r\in[s,t]\), where  \(M\) is a positive constant, independent of \(s\) and \(t\), and
\[
\mathcal R^{R^s}_{u,v}
:=
R^s_{u,v}-(R^s)'_uX_{u,v},
\qquad
(\beta-2\alpha)_+
:=
\max\{\beta-2\alpha,0\}.
\]
\end{enumerate}
Then there exists a constant \(C>0\), depending only on
\(\alpha\), \(T\), \(X\) and \((Z,Z')\), such that
\[
\left|
\sint_s^t R_r^s\,d\bZ_r^a
\right|
\le
CM|t-s|^{\beta+\alpha},\quad a\in\mathcal A.
\]
\end{lemma}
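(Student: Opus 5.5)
The plan is to apply the local expansion of Lemma~\ref{thm:controlledintegral} at the left endpoint $s$, where the leading terms vanish, and then sharpen the generic $3\alpha$ remainder by running the sewing argument on $[s,t]$ with the weighted bounds (i)--(ii).

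\textbf{Step 1: the local terms.} With $H=R^s$, $H'=(R^s)'$ and driver $\mathbf Z^a=(Z^a,(Z^a)')$, the germ at $s$ is
\[
R^s_sZ^a_{s,t}+((R^s)'_s\diamond (Z^a)'_s)\mathbb X_{s,t}.
\]
The first term is zero because $R^s_s=0$. For the second, (i) gives $|(R^s)'_s|\le M|s-s|^{\beta-\alpha}$. When $\beta>\alpha$ this is zero. When $\beta=\alpha$ we only get $|(R^s)'_s|\le M$, and then
\[
|((R^s)'_s\diamond(Z^a)'_s)\mathbb X_{s,t}|\le M\,|(Z^a)'_s|\,\|\mathbb X\|_{2\alpha}|t-s|^{2\alpha}=CM|t-s|^{\beta+\alpha}.
\]
In either case the germ is bounded by $CM|t-s|^{\beta+\alpha}$. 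It remains to estimate the sewing remainder $\sint_s^tR^s\,d\bZ^a-(\text{germ at }s)$ by $CM|t-s|^{\beta+\alpha}$.

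\textbf{Step 2: the sewing remainder.} Let $\Xi_{u,v}:=R^s_uZ^a_{u,v}+((R^s)'_u\diamond(Z^a)'_u)\mathbb X_{u,v}$. By Chen's identity, the standard computation gives, for $u\le w\le v$,
\[
\delta\Xi_{u,w,v}=-\mathcal R^{R^s}_{u,w}Z^a_{w,v}-R^s_u\,\mathcal R^{Z^a}_{w,v}\ \text{-type terms}-(\text{increment of }(R^s)'\diamond(Z^a)')_{u,w}\mathbb X_{w,v}-((R^s)'_u\diamond R^{Z^a}\text{-type})\,X\otimes X .
\]
More precisely, each term is a product of one factor built from $R^s$ (namely $R^s_u$, $(R^s)'_u$, $(R^s)'_{u,w}$ or $\mathcal R^{R^s}_{u,w}$) with one factor built from $\mathbf Z^a$ and $\mathbf X$, whose norms are absorbed into $C$. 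On $[u,v]\subset[s,t]$ the $R^s$-factors are bounded as follows. From (i), $|R^s_u|\le M|t-s|^{\beta}$ and $|(R^s)'_u|\le M|t-s|^{\beta-\alpha}$. From (ii), $|(R^s)'_{u,w}|\le M|t-s|^{(\beta-2\alpha)_+}|v-u|^{\alpha}$ and $|\mathcal R^{R^s}_{u,w}|\le M|t-s|^{(\beta-2\alpha)_+}|v-u|^{2\alpha}$. Writing $|v-u|^{3\alpha}$ against the available powers then gives
\[
|\delta\Xi_{u,w,v}|\le CM\,|t-s|^{\beta-2\alpha}|v-u|^{3\alpha}\quad\text{when }\beta\ge2\alpha .
\]
In this case the sewing lemma on $[s,t]$ yields the remainder bound $CM|t-s|^{\beta-2\alpha}|t-s|^{3\alpha}=CM|t-s|^{\beta+\alpha}$.

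\textbf{Step 3: the case $\alpha\le\beta<2\alpha$ (main obstacle).} Here the $\mathcal R^{R^s}$ and $(R^s)'$ increments carry no extra smallness from (ii). The factors $R^s_u$ and $(R^s)'_u$ still carry $|t-s|^{\beta}$ and $|t-s|^{\beta-\alpha}$. The uniform estimate becomes $|\delta\Xi|\le CM|v-u|^{3\alpha}$, which sews to $CM|t-s|^{3\alpha}$. Since $3\alpha\ge\beta+\alpha$ exactly when $\beta\le2\alpha$, this already gives $CM|t-s|^{\beta+\alpha}$, using $|t-s|\le T$ to absorb the surplus power into $C$. Each term must then be checked to have total homogeneity at least $\beta+\alpha$ after using $|v-u|\le|t-s|$. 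The terms are of the form (coefficient bound)$\times|v-u|^{\kappa}$, for example $|\mathcal R^{R^s}_{u,w}||Z^a_{w,v}|\lesssim M|v-u|^{3\alpha}$ and $|R^s_u||\mathcal R^{Z^a}_{w,v}|\lesssim M|t-s|^{\beta}|v-u|^{2\alpha}$. The sewing lemma needs a summable gain, exponent $>1$ in $|v-u|$. The clean way is the uniform bound $|\delta\Xi|\le CM|v-u|^{3\alpha}$, obtained by bounding $|R^s_u|\le\|R^s\|$-type constants; this requires $|R^s_u|\lesssim M$ and $|(R^s)'_u|\lesssim M$, which follow from (i). The resulting $|t-s|^{3\alpha}$ then beats $|t-s|^{\beta+\alpha}$.

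\textbf{Expected difficulty.} The step I expect to be delicate is the regime $\beta>2\alpha$. There the terms involving $R^s_u$ and $(R^s)'_u$ contribute powers $|t-s|^{\beta}|v-u|^{2\alpha}$ and $|t-s|^{\beta-\alpha}|v-u|^{2\alpha}$, whose exponent in $|v-u|$ is not $3\alpha$. I would handle these by writing $R^s_u=R^s_{s,u}$ and $(R^s)'_u=(R^s)'_{s,u}$, which is valid when $\beta>\alpha$ since then both vanish at $s$. Each such term is then expanded via (ii) over $[s,u]$, converting spare powers of $|t-s|$ into the $|t-s|^{\beta-2\alpha}$ prefactor. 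If this bookkeeping becomes awkward, the fallback is to apply the sewing lemma on dyadic subintervals of $[s,t]$ and sum directly.
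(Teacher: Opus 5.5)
Your strategy is the paper's: expand at the left endpoint, kill the germ using $R^s_s=0$ and the bound on $(R^s)'_s$, and control the sewing remainder by the size of $(R^s,(R^s)')$ on $[s,t]$. Your Step~1 is exactly right. The gap is in your description of $\delta\Xi$.

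With $\Xi_{u,v}=R^s_uZ^a_{u,v}+((R^s)'_u\diamond(Z^a)'_u)\mathbb X_{u,v}$ and $\delta\Xi_{u,w,v}:=\Xi_{u,v}-\Xi_{u,w}-\Xi_{w,v}$, Chen's identity gives exactly
\[
\delta\Xi_{u,w,v}=-\mathcal R^{R^s}_{u,w}Z^a_{w,v}-\bigl((R^s)'_uX_{u,w}\bigr)\bigl(R^{Z^a}_{w,v}+(Z^a)'_{u,w}X_{w,v}\bigr)-\bigl((R^s)'_w\diamond(Z^a)'_w-(R^s)'_u\diamond(Z^a)'_u\bigr)\mathbb X_{w,v},
\]
where $(Z^a)'_{u,w}:=(Z^a)'_w-(Z^a)'_u$. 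No term contains $R^s_u$, and the term with $(R^s)'_u$ carries the extra factor $X_{u,w}$. So the contributions of size $|t-s|^{\beta}|v-u|^{2\alpha}$ and $|t-s|^{\beta-\alpha}|v-u|^{2\alpha}$ in your last paragraph do not exist.

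Had they existed, your remedy could not have repaired the argument. The sewing lemma needs an exponent in $|v-u|$ strictly larger than $1$, whereas $2\alpha\le 1$. Rewriting $R^s_u=R^s_{s,u}$ and expanding over $[s,u]$ changes only the $|t-s|$-prefactor, not the $|v-u|$-exponent. A dyadic decomposition does not make such a defect summable either. As written, the bounds you assert in Steps~2 and~3 do not follow from the terms you list: bounding $|R^s_u|$ by a constant against $R^{Z^a}_{w,v}$ gives only $|v-u|^{2\alpha}$. Your ``Expected difficulty'' paragraph effectively retracts those bounds.

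The repair is simply the correct formula. Every term above is $O(|v-u|^{3\alpha})$. Up to norms of $\mathbf X$ and $(Z^a,(Z^a)')$, its coefficient is one of $\|\mathcal R^{R^s}\|_{2\alpha;[s,t]}$, $\|(R^s)'\|_{\alpha;[s,t]}$, or $\sup_{[s,t]}|(R^s)'|\le|(R^s)'_s|+\|(R^s)'\|_{\alpha;[s,t]}|t-s|^{\alpha}$. All of these are bounded by $C\|(R^s,(R^s)')\|_{X,\alpha;[s,t]}\le CM|t-s|^{(\beta-2\alpha)_+}$, because:
\begin{itemize}
\item $R^s_s=0$;
\item if $\beta>\alpha$, then $(R^s)'_s=0$;
\item if $\beta=\alpha$, then $|(R^s)'_s|\le M$ and $(\beta-2\alpha)_+=0$.
\end{itemize}
The sewing lemma then bounds the remainder by $CM|t-s|^{3\alpha+(\beta-2\alpha)_+}\le CM|t-s|^{\beta+\alpha}$ in all regimes at once, with no special treatment of $\beta>2\alpha$.

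This is precisely the paper's proof. It packages the sewing step as the remainder estimate of Lemma~\ref{thm:controlledintegral} applied on $[s,t]$, which is linear in the controlled norm of the integrand, and so never opens up $\delta\Xi$. Note that it uses hypothesis (i) only at $r=s$.
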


\begin{proof}
By \eqref{eq:controlled-integral-definition}, we have
\[
\sint_s^t R_r^s\,d\bZ_r^a
=
R_s^sZ_{s,t}^a
+
\bigl((R^s)'_s\diamond(Z^a)'_s\bigr)\mathbb X_{s,t}
+
\mathcal E_{s,t},
\]
where
$
|\mathcal E_{s,t}|
\le
C
\|(R^s,(R^s)')\|_{X,\alpha;[s,t]}
|t-s|^{3\alpha},
$
and $\diamond$ is the contraction defined in \eqref{E2-3}.
Since \(R_s^s=0\), the first term vanishes.
For the second term, if \(\beta>\alpha\), then
$
(R^s)'_s=0,
$
while if \(\beta=\alpha\), then
$
|(R^s)'_s|\le M.
$
Therefore, in both cases,
\[
\left|
\bigl((R^s)'_s\diamond(Z^a)'_s\bigr)
\mathbb X_{s,t}
\right|
\le
CM|t-s|^{\beta+\alpha}.
\]

It remains to estimate the remainder term. By the assumptions $R_s^s=0$ and $|(R^s)'_s|\le M$, the controlled norm on $[s, t]$ satisfies
$$
\left\|\left(R^s,\left(R^s\right)^{\prime}\right)\right\|_{X, \alpha ;[s, t]}=\left|R_s^s\right|+\left|\left(R^s\right)_s^{\prime}\right|+\left\|\left(R^s\right)^{\prime}\right\|_{\alpha ;[s, t]}+\left\|\mathcal R^{R^s}\right\|_{2 \alpha ;[s, t]} \leq CM|t-s|^{(\beta-2 \alpha)_{+}} .
$$
Hence, 
\[
|\mathcal E_{s,t}|
\le
CM|t-s|^{3\alpha+(\beta-2\alpha)_+}.
\]
If \(\beta\ge2\alpha\), then
$
3\alpha+(\beta-2\alpha)_+=\beta+\alpha.
$
If \(\alpha\le\beta<2\alpha\), then
$
3\alpha+(\beta-2\alpha)_+=3\alpha>\beta+\alpha,
$
therefore
$$
|t-s|^{3\alpha}
\le
T^{2\alpha-\beta}|t-s|^{\beta+\alpha}.
$$
Absorbing the factor \(T^{2\alpha-\beta}\) into the constant gives
$
|\mathcal E_{s,t}|
\le
CM|t-s|^{\beta+\alpha}.
$

Combining the above estimates yields
$$
\left|
\sint_s^t R_r^s\,d\bZ_r^a
\right|
\le
CM|t-s|^{\beta+\alpha}.
$$
This completes the proof of Lemma~\ref{lem:controlled-remainder-integration}.
\end{proof}

\begin{proof}[Proof of Proposition \ref{prop:well-posedness-controlled-coefficients}]
We use induction on the order $|\eta|\geq 0$. For every forest \(\eta\), we prove that the pair
$
\mathbf Z^\eta_{s,\bigcdot}
=
\left(
Z^\eta_{s,\bigcdot},
(Z^\eta_{s,\bigcdot})'
\right)
$
is \(\bX\)-controlled on \([s,T]\) and satisfies
\eqref{eq:controlled-coefficient-size}
for all \(s\le r\le t\le T\).

{\bf (Initial step).}
For the empty forest \(\etree\), by Definition~\ref{def:controlled-forest-coefficients}(i), we have  
$
Z^\etree_{s,t}=1.
$
Hence
$$
\mathbf Z^\etree_{s,\bigcdot}
=
\left(
Z^\etree_{s,\bigcdot},
(Z^\etree_{s,\bigcdot})'
\right)
=
(1,0)
,$$ 
where \(1\) denotes the constant path and \(0\) denotes the zero Gubinelli derivative.
Therefore, \(\mathbf Z^\etree_{s,\bigcdot}\) is a constant \(\bX\)-controlled path with
zero controlled remainder.

We next verify the case of forests of order one.  Every forest of order one is a one-node tree $\bullet_a=[\etree]_a$. By Definition~\ref{def:controlled-forest-coefficients},
$
 Z^{\bullet_a}_{s,r}
=
\sint_s^r 1\,d\bZ_u^a
=
Z^a_{s,r}.
$
Since $\left(Z, Z^{\prime}\right) \in \mathcal{D}_\bX^\alpha\left([0, T] ; \mathbb{R}^m\right)$, the component $Z^a$ is $X$-controlled and $\alpha$-H\"older continuous. Therefore,
$$
|Z_{s, r}^{\bullet_a}|=|Z_{s, r}^a| \leq C|r-s|^\alpha=C|r-s|^{|\bullet_a|\alpha}.
$$
Moreover,
$
Z_{s, s}^{{\bullet}_a}=0$ and 
$$
|(Z_{s, \bigcdot}^{{\bullet}_a})_r^{\prime}|=|(Z^a)_r^{\prime}| \leq|(Z^a)_0^{\prime}|+\|(Z^a)^{\prime}\|_{\alpha ;[0, T]} r^\alpha \leq C=C|r-s|^{(|\bullet_a|-1) \alpha}, \quad r \in[s, T],
$$
where $C$ is independent of $s, r$ and $t$. For $s \leq u \leq v \leq T$, we have 
$$
R_{u, v}^{Z_{s, \bigcdot}^{\bullet_a}}  =(Z_{s, \bigcdot}^{\bullet_a})_{u, v}-(Z_{s, \bigcdot}^{\bullet_a})_u^{\prime} X_{u, v} 
 =Z_{u, v}^a-(Z^a)_u^{\prime} X_{u, v} 
 =R_{u, v}^{Z^a},
$$
and so
\[
\begin{aligned}
&\|(Z_{s,\bigcdot}^{\bullet_a})'\|_{\alpha;[s,t]}
+\|R^{Z_{s,\bigcdot}^{\bullet_a}}\|_{2\alpha;[s,t]}
\leq
\|(Z^a)'\|_{\alpha;[0,T]}
+\|R^{Z^a}\|_{2\alpha;[0,T]}
\leq C ,
\end{aligned}
\]
which proves the required estimate since \(|\bullet_a|=1\).

{\bf (Inductive step).}
Assume that the conclusions of the proposition hold for every nonempty forest of order at most $k$ for some $k\geq 1$. We prove them for forests of order $k+1$. The remaining proof is decomposed into the following two steps.

\noindent{\bf Step 1.} Suppose that $\eta$ consists of at least two trees
$$
\eta=\tau_1 \cdots \tau_n, \quad n \geq 2, \quad|\eta|=\left|\tau_1\right|+\cdots+\left|\tau_n\right|=k+1 .
$$
Since $n \geq 2$, each tree satisfies $\left|\tau_j\right| \leq k$. The induction hypothesis therefore applies to all paths
$
Z_{s, \bigcdot}^{\tau_j}:[s, T]\rightarrow \RR.
$ 
By Definition~\ref{def:controlled-forest-coefficients}, we obtain
$
Z_{s, r}^\eta=\prod_{j=1}^n Z_{s, r}^{\tau_j} .
$
Since finite products of $\bX$-controlled paths are again $\bX$-controlled, the path
$\mathbf Z^\eta_{s,\bigcdot}
$
is $\bX$-controlled on $[s, T]$.
By the induction hypothesis,
$$
|Z_{s, r}^\eta|  =\prod_{j=1}^n|Z_{s, r}^{\tau_j}|  \leq C \prod_{j=1}^n|r-s|^{|\tau_j| \alpha}  =C|r-s|^{|\eta| \alpha}.
$$
Moreover, every factor vanishes at $r=s$. Hence
$
Z_{s, s}^\eta=0 .
$
The product rule gives
$$
(Z_{s, \bigcdot}^\eta)_r^{\prime}=\sum_{j=1}^n(Z_{s,\bigcdot}^{\tau_j})_r^{\prime} \prod_{\substack{i=1 \\ i \neq j}}^n Z_{s, r}^{\tau_i} .
$$
Therefore,
$$
|(Z_{s,\bigcdot}^\eta)_r^{\prime}|  \leq \sum_{j=1}^n|(Z_{s,\bigcdot}^{\tau_j})_r^{\prime}| \prod_{i \neq j}|Z_{s, r}^{\tau_i}|  \leq C \sum_{j=1}^n|r-s|^{(|\tau_j|-1) \alpha} \prod_{i \neq j}|r-s|^{|\tau_i| \alpha} 
=C|r-s|^{(|\eta|-1) \alpha} .
$$
To estimate the Hölder seminorm of the Gubinelli derivative, let
\(s\le u<v\le t\). Using the product rule, we obtain
\[
(Z_{s,\bigcdot}^\eta)'_v-(Z_{s,\bigcdot}^\eta)'_u
=
\sum_{j=1}^{n}
\left(
(Z_{s,\bigcdot}^{\tau_j})'_v
\prod_{i\neq j}Z_{s,v}^{\tau_i}
-
(Z_{s,\bigcdot}^{\tau_j})'_u
\prod_{i\neq j}Z_{s,u}^{\tau_i}
\right).
\]

By the induction hypothesis and the product estimate for controlled paths, it follows that
\[
|(Z_{s,\bigcdot}^\eta)'_v-(Z_{s,\bigcdot}^\eta)'_u|
\leq
C|t-s|^{(|\eta|-2)\alpha}|v-u|^\alpha .
\]
Then 
\[
\|(Z_{s,\bigcdot}^\eta)'\|_{\alpha;[s,t]}
\leq
C|t-s|^{(|\eta|-2)\alpha}.
\]
In addition, for every \(j=1,\ldots,n\) and \(s\le u<v\le t\), the
controlled expansion and the induction hypothesis give
\[
|
( Z^{\tau_j}_{s,\bigcdot})_{u,v}
|
\le
|
( Z^{\tau_j}_{s,\bigcdot})'_u
|
|X_{u,v}|
+
|
R^{ Z^{\tau_j}_{s,\bigcdot}}_{u,v}
|
\le
C|t-s|^{(|\tau_j|-1)\alpha}|v-u|^\alpha
+
C|t-s|^{(|\tau_j|-2)_+\alpha}|v-u|^{2\alpha}
\le
C|t-s|^{(|\tau_j|-1)\alpha}|v-u|^\alpha.
\]
Hence
$$
\|
 Z^{\tau_j}_{s,\bigcdot}
\|_{\alpha;[s,t]}
\le
C|t-s|^{(|\tau_j|-1)\alpha}.
$$

It remains to estimate the controlled remainder. Expanding the product
at the left endpoint \(u\) gives
\[
R^{ Z^\eta_{s,\bigcdot}}_{u,v}
=
\sum_{j=1}^n
R^{Z^{\tau_j}_{s,\bigcdot}}_{u,v}
\prod_{i\ne j} Z^{\tau_i}_{s,u}
+
\sum_{\substack{J\subseteq\{1,\ldots,n\}\\ |J|\ge2}}
\left(
\prod_{j\in J}
(Z^{\tau_j}_{s,\bigcdot})_{u,v}
\right)
\left(
\prod_{i\notin J}
 Z^{\tau_i}_{s,u}
\right).
\]
For the first sum, the induction hypothesis gives
$$
|
R^{Z^{\tau_j}_{s,\bigcdot}}_{u,v}
\prod_{i\ne j} Z^{\tau_i}_{s,u}
|
\le
C|t-s|^{(|\eta|-2)\alpha}|v-u|^{2\alpha}.
$$
If $|J|=q\ge2$, then
$$
|
\prod_{j\in J}
( Z^{\tau_j}_{s,\bigcdot})_{u,v}
\prod_{i\notin J}
Z^{\tau_i}_{s,u}
|
\le
C|t-s|^{(|\eta|-q)\alpha}|v-u|^{q\alpha}\le
C|t-s|^{(|\eta|-2)\alpha}|v-u|^{2\alpha}.
$$
Consequently,
$$
\|
R^{Z^\eta_{s,\bigcdot}}
\|_{2\alpha;[s,t]}
\le
C|t-s|^{(|\eta|-2)\alpha}.
$$
Combining the last two estimates yields
\[
\|
( Z^\eta_{s,\bigcdot})'
\|_{\alpha;[s,t]}
+
\|
R^{ Z^\eta_{s,\bigcdot}}
\|_{2\alpha;[s,t]}
\le
C|t-s|^{(|\eta|-2)_+\alpha}.
\]
Hence all the stated estimates hold for forests of order \(k+1\)
consisting of at least two trees.

\noindent{\bf Step 2.} 
Consider a forest \(\eta\) of order \(k+1\ge2\)
consisting of a single tree \(\tau\). Write
$
\eta=\tau=[\eta']_a,
$
with \(\eta'\in\mathcal F^0\), \(a\in\mathcal A\) and
\(|\eta'|=k\).
Since \(|\eta'|=k\), the induction hypothesis applies to
\(Z_{s,\bigcdot}^{\eta^{\prime}}\). In particular,
\[
 Z^{\eta'}_{s,s}=0,
\quad
|Z^{\eta'}_{s,r}|
\le
C|r-s|^{|\eta'|\alpha},\quad
|
(Z^{\eta'}_{s,\bigcdot})'_r
|
\le
C|r-s|^{(|\eta'|-1)\alpha}
\]
and
\[
\|
( Z^{\eta'}_{s,\bigcdot})'
\|_{\alpha;[s,t]}
+
\|
R^{ Z^{\eta'}_{s,\bigcdot}}
\|_{2\alpha;[s,t]}
\le
C|t-s|^{(|\eta'|-2)_+\alpha}.
\]

By Definition~\ref{def:controlled-forest-coefficients}, we have
$ Z^\tau_{s,r}
=
\sint_s^r
 Z^{\eta'}_{s,u}\,d\bZ_u^a.
$
Lemma~\ref{lem:controlled-remainder-integration} applies with 
$
R^s=Z^{\eta'}_{s,\bigcdot}$
and
$
\beta=|\eta'|\alpha.
$
Hence,
\[
| Z^\tau_{s,r}|=
\left|
\sint_s^r
Z^{\eta'}_{s,u}\,d\bZ_u^a
\right|
\le
C|r-s|^{(|\eta'|+1)\alpha}
=
C|r-s|^{|\tau|\alpha}.
\]
Moreover,
$
 Z^\tau_{s,s}=0.
$
By Lemma~\ref{prop:integral-controlled}, the path
$\mathbf Z^\tau_{s,\bigcdot}$ is \(\bX\)-controlled on \([s,T]\), and
its Gubinelli derivative is
$
\left(Z^\tau_{s,\bigcdot}\right)'_r
=
 Z^{\eta'}_{s,r}(Z^a)'_r.
$
Since \((Z^a)'\) is bounded, it follows that
\[
|
( Z^\tau_{s,\bigcdot})'_r
|
\le
C| Z^{\eta'}_{s,r}|
\le
C|r-s|^{|\eta'|\alpha}
=
C|r-s|^{(|\tau|-1)\alpha}.
\]

We next estimate the local H\"older seminorm of the Gubinelli
derivative. For \(s\le u<v\le t\), the controlled expansion and the
induction hypothesis give
\[
|
( Z^{\eta'}_{s,\bigcdot})_{u,v}
|
\le
|
( Z^{\eta'}_{s,\bigcdot})'_u
|
|X_{u,v}|
+
|
R^{ Z^{\eta'}_{s,\bigcdot}}_{u,v}
|
\le
C|t-s|^{(|\eta'|-1)\alpha}|v-u|^\alpha.
\]
Using
$
( Z^\tau_{s,\bigcdot})'_r
=
Z^{\eta'}_{s,r}(Z^a)'_r,
$
we obtain
\[
|
( Z^\tau_{s,\bigcdot})'_v
-
( Z^\tau_{s,\bigcdot})'_u
|
\le
|
( Z^{\eta'}_{s,\bigcdot})_{u,v}
|
|(Z^a)'_v|
+
| Z^{\eta'}_{s,u}|
|
(Z^a)'_v-(Z^a)'_u
|
\le
C|t-s|^{(|\eta'|-1)\alpha}|v-u|^\alpha.
\]
Therefore,
\[
\|
(Z^\tau_{s,\bigcdot})'
\|_{\alpha;[s,t]}
\le
C|t-s|^{(|\eta'|-1)\alpha}
=
C|t-s|^{(|\tau|-2)_+\alpha}.
\]
It remains to estimate the controlled remainder. By the local
controlled rough integral expansion,
\[
( Z^\tau_{s,\bigcdot})_{u,v}
=
 Z^{\eta'}_{s,u}Z^a_{u,v}
+
(
(Z^{\eta'}_{s,\bigcdot})'_u
\diamond
(Z^a)'_u
)\mathbb X_{u,v}
+
\mathcal E_{u,v},
\]
where
$$
|\mathcal E_{u,v}|
\le
C|t-s|^{(|\eta'|-2)_+\alpha}|v-u|^{3\alpha}.
$$
Since
$$
Z^a_{u,v}
=
(Z^a)'_uX_{u,v}
+
R^{Z^a}_{u,v}, \qquad 
(Z^\tau_{s,\bigcdot})'_u
=
Z^{\eta'}_{s,u}(Z^a)'_u,
$$
it follows that
\[
R^{Z^\tau_{s,\bigcdot}}_{u,v}
=
Z^{\eta'}_{s,u}R^{Z^a}_{u,v}+
(
( Z^{\eta'}_{s,\bigcdot})'_u
\diamond
(Z^a)'_u
)\mathbb X_{u,v}
+
\mathcal E_{u,v}.
\]
The induction hypothesis and the bounds for \(Z^a\) and \(\mathbb X\)
give
\[
|
Z^{\eta'}_{s,u}R^{Z^a}_{u,v}
|
\le
C|t-s|^{|\eta'|\alpha}|v-u|^{2\alpha},
\qquad
|
(
(Z^{\eta'}_{s,\bigcdot})'_u
\diamond
(Z^a)'_u
)\mathbb X_{u,v}
|
\le
C|t-s|^{(|\eta'|-1)\alpha}|v-u|^{2\alpha}.
\]
Moreover, since $|v-u|\le |t-s|$, we have 
$$  
|\mathcal E_{u,v}|
\le
C|t-s|^{(|\eta'|-1)\alpha}|v-u|^{2\alpha}.
$$
Consequently,
$$
\|
R^{ Z^\tau_{s,\bigcdot}}
\|_{2\alpha;[s,t]}
\le
C|t-s|^{(|\eta'|-1)\alpha}
=
C|t-s|^{(|\tau|-2)_+\alpha}.
$$
Combining the last two estimates yields
\[
\|
( Z^\tau_{s,\bigcdot})'
\|_{\alpha;[s,t]}
+
\|
R^{ Z^\tau_{s,\bigcdot}}
\|_{2\alpha;[s,t]}
\le
C|t-s|^{(|\tau|-2)_+\alpha}.
\]
Hence all the stated estimates hold for trees of order \(k+1\).
Together with the product-forest case, they hold for every nonempty
forest of order \(k+1\). The induction is complete.

For fixed \(p\), there are only finitely many decorated forests in
\(\mathcal F_{\le p}\). The constant may therefore be chosen uniformly over
all \(\eta\in\mathcal F_{\le p}\). This proves Proposition~\ref{prop:well-posedness-controlled-coefficients}. 
\end{proof}

In analogy to the classical B-series
\cite{Butcher1963,EW1996},
we combine the elementary differentials from
Definition~\ref{def:elementary-differentials}
with the controlled coefficients from
Definition~\ref{def:controlled-forest-coefficients}.
This yields a B-series expansion in which the classical B-series coefficients are replaced by the recursively defined coefficients \(Z^\tau_{s,t}\).

Throughout this section, $p$ denotes the truncation or expansion order according to the context. Additional assumptions on $p$ are imposed when required for the corresponding estimates.

\begin{definition}[Controlled B-series]
\label{def:controlled-b-series}
Let \(\phi:\mathcal T^0\to\mathbb R\) be a weight map satisfying
\(\phi(\etree)=1\), and denote by
$
\mathcal Z=( Z^\eta)_{\eta\in\mathcal F^0}
$
the family of controlled coefficients defined in
Definition~\ref{def:controlled-forest-coefficients}.
For \(y\in\mathbb R^e\) and \(0\le s\le t\le T\), the
\emph{controlled B-series associated with the weight map \(\phi\)
and the family of controlled coefficients \(\mathcal Z\)} is the formal series
\begin{equation*}
B(\phi;y, \mathcal Z_{s,t})
:=
y+
\sum_{\tau\in\mathcal T}
\frac{\phi(\tau)}{\sigma(\tau)}
F(\tau)(y)\,
 Z^\tau_{s,t},
\end{equation*}
where \(F(\tau)\) denotes the elementary differential associated with
\(\tau\), and \(\sigma(\tau)\) is the symmetry factor of $\tau$.
\end{definition}

For later use, we also define the \emph{truncated controlled B-series of order \(p\)} by
$$
B_p(\phi;y,\mathcal Z_{s,t})
:=
y
+
\sum_{\tau\in\mathcal T_{\leq p}}
\frac{\phi(\tau)}{\sigma(\tau)}
F(\tau)(y)\,
 Z^\tau_{s,t}.
$$
\begin{remark}
If \(Z=X\) and \(Z'=\operatorname{Id}\), then the controlled coefficient
\(Z^\tau_{s,t}\), where
\(Z^\tau_{s,\bigcdot}:[s,T]\to\mathbb R\),
\(r\mapsto Z^\tau_{s,r}\),
reduces to the classical tree coefficient of the rough path expansion.
Moreover, \(F(\tau)\) becomes the usual elementary differential in
Runge-Kutta theory. Hence the controlled B-series reduces to the
classical B-series in this case.
\end{remark}

The notation above separates the two ingredients of the expansion: the elementary
differentials \(F(\tau)\), which depend only on the vector fields \(F_1,\ldots,F_m\), and the
coefficients \(Z^\tau_{s,t}\), which are generated by the controlled driver \(Z\).
This notation will be used to express the local expansion of the exact solution
and to compare it with the Runge-Kutta one-step maps introduced below. It is the
controlled-driver analogue of the B-series formalism used for classical rough
differential equations driven by branched rough paths; see
\cite{RR2022}.

\begin{theorem} (Exact controlled B-series expansion of solutions)
\label{thm:exact-controlled-bseries}
Let \((Y,Y')\in\mathcal D_\bX^\alpha([0,T];\mathbb R^e)\) be the solution of
\eqref{eq:component-controlled-rde} and 
$
F_a\in C_b^{p+1}(\mathbb R^e;\mathbb R^e) 
$
for  $a\in\mathcal A$, where $
(p+1)\alpha>1
$. Then, for every \(0\le s\le t\le T\),
\begin{equation}
\label{eq:exact-controlled-bseries-expansion}
Y_t
=
Y_s
+
\sum_{\tau\in\mathcal T_{\leq p}}
\frac{1}{\sigma(\tau)}
F(\tau)(Y_s)\,
Z^\tau_{s,t}
+
R_{s,t}^{(p+1)}=B_p(1;Y_s,\mathcal Z_{s,t})
+
R_{s,t}^{(p+1)},
\end{equation}
where $1$ denotes the constant weight map, \(F(\tau)\) is the
elementary differential defined in Definition~\ref{def:elementary-differentials}, and the remainder $R_{s,t}^{(p+1)}$ satisfies
$$
|R_{s,t}^{(p+1)}|
\le
C_{p,T}|t-s|^{(p+1)\alpha}.
$$
\end{theorem}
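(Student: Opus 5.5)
We argue by iterating the integral equation and Taylor expanding $F_a(Y_r)$ around $Y_s$. The aim is to show by induction on $q=1,\dots,p$ a stronger, path-level statement: for fixed $s$, define
\[
\mathcal R^{(q)}_{s,r}:=Y_r-Y_s-\sum_{\tau\in\mathcal T_{\le q}}\frac{1}{\sigma(\tau)}F(\tau)(Y_s)\,Z^\tau_{s,r},\qquad r\in[s,T].
\]
We claim that $r\mapsto \mathcal R^{(q)}_{s,r}$ is an $\bX$-controlled path on $[s,T]$ satisfying the hypotheses of Lemma~\ref{lem:controlled-remainder-integration} with $\beta=(q+1)\alpha$ and a constant $M$ independent of $s,t$. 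The statement of the theorem is the case $q=p$, evaluated at $r=t$. One subtlety must be handled at the start: for $(q+1)\alpha>1$ the bound $|\mathcal R^{(q)}_{s,r}|\le M|r-s|^{(q+1)\alpha}$ is still meaningful, but the derivative and remainder bounds in hypothesis (ii) must be tracked only with the exponent $(\beta-2\alpha)_+$ as in the lemma, which is what the induction provides.

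The base case $q=0$ is $\mathcal R^{(0)}_{s,r}=Y_{s,r}$. This is controlled, with derivative $F(Y_r)Z'_r$ by Lemma~\ref{cor:global-well-posedness}, so the hypotheses hold with $\beta=\alpha$. For the inductive step we write
\[
Y_{s,t}=\sum_a\sint_s^t F_a(Y_r)\,d\bZ^a_r .
\]
We Taylor expand $F_a(Y_r)=F_a(Y_s+Y_{s,r})$ to order $q$ in the increment $Y_{s,r}$, and then substitute $Y_{s,r}=\sum_{|\tau|\le q}\sigma(\tau)^{-1}F(\tau)(Y_s)Z^\tau_{s,r}+\mathcal R^{(q)}_{s,r}$. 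Multilinearity of $D^nF_a(Y_s)$ together with the product rule $Z^{\tau_1\cdots\tau_n}=\prod_j Z^{\tau_j}$ (Definition~\ref{def:controlled-forest-coefficients}(ii)) groups the polynomial part into terms
\[
\frac{1}{n!}\,D^nF_a(Y_s)[F(\tau_1),\dots,F(\tau_n)](Y_s)\prod_j\frac{Z^{\tau_j}_{s,r}}{\sigma(\tau_j)} .
\]
Integrating against $d\bZ^a$ and using Definition~\ref{def:controlled-forest-coefficients}(iii) turns each such term into $F([\tau_1\cdots\tau_n]_a)(Y_s)\,Z^{[\tau_1\cdots\tau_n]_a}_{s,t}$, times a combinatorial factor. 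This factor is the standard one: summing over ordered tuples $(\tau_1,\dots,\tau_n)$ with $1/n!$ produces $1/\prod_j m_j!$ for multiset multiplicities, and combined with $\prod\sigma(\tau_j)$ this gives exactly $1/\sigma([\tau_1\cdots\tau_n]_a)$ by Definition~\ref{def:symmetry-factor}. Keeping only trees with $|\tau|\le q+1$ gives the order-$(q+1)$ B-series.

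The leftover $\mathcal R^{(q+1)}_{s,t}$ is then the controlled integral $\sum_a\sint_s^t G^a_{s,r}\,d\bZ^a_r$ of a path $G^a_{s,r}$ consisting of three kinds of term. The first is the Taylor remainder $\int_0^1\frac{(1-\theta)^{q}}{q!}D^{q+1}F_a(Y_s+\theta Y_{s,r})[Y_{s,r}]^{\otimes(q+1)}d\theta$. The second consists of terms multilinear in $Z^{\tau_j}_{s,r}$ and $\mathcal R^{(q)}_{s,r}$ in which at least one slot is $\mathcal R^{(q)}$. The third consists of products of coefficients of total order exceeding $q$, which are truncated away. Each is of size $|r-s|^{(q+1)\alpha}$: the first by $|Y_{s,r}|\lesssim|r-s|^\alpha$, the second by the inductive hypothesis, and the third by Proposition~\ref{prop:well-posedness-controlled-coefficients}. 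The controlled structure of each, with derivative bound $|r-s|^{q\alpha}$ and seminorm bounds $|t-s|^{(q-1)_+\alpha}$, follows from Lemma~\ref{lem:composition} applied to $D^{q+1}F_a$, from product estimates exactly as in Step~1 of the proof of Proposition~\ref{prop:well-posedness-controlled-coefficients}, and from the inductive hypothesis.

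Applying Lemma~\ref{lem:controlled-remainder-integration} with $\beta=(q+1)\alpha$ then gives $|\mathcal R^{(q+1)}_{s,t}|\le C|t-s|^{(q+2)\alpha}$. By Lemma~\ref{prop:integral-controlled} the path $\mathcal R^{(q+1)}_{s,\bigcdot}$ has Gubinelli derivative $G^a_{s,r}(Z^a)'_r$, and repeating the estimates of Step~2 of that proof propagates the full set of hypotheses. The regularity assumption $F_a\in C_b^{p+1}$ is exactly what is needed: it makes the order-$p$ Taylor remainder, which involves $D^{p}F_a$ and, through Lemma~\ref{lem:composition}, one further derivative, a controlled path with bounded norms. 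The main obstacle is this bookkeeping of the controlled norm of the Taylor remainder term. Its Gubinelli derivative involves $D^{q+2}F_a$ when $q<p$, which is available, but at the last step we must instead expand only to order $p-1$ with an integral remainder using $D^{p}F_a$, so that one derivative is left for the composition lemma. Uniformity of constants in $s$ follows because only finitely many trees occur and all bounds depend on $s$ only through $|r-s|$.
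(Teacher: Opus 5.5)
Your skeleton is the paper's. You Taylor-expand $F_a(Y_r)$ at $Y_s$, substitute the order-$q$ expansion, recover $1/\sigma([\eta]_a)$ from the multinomial count, and feed the integrand remainder into Lemma~\ref{lem:controlled-remainder-integration} with $\beta=(q+1)\alpha$; your $G^a_{s,\bigcdot}$ is exactly the paper's $E^s_{a,q}$. You differ in two ways. You carry the stronger hypothesis that $\mathcal R^{(q)}_{s,\bigcdot}$ itself satisfies the hypotheses of that lemma, and you split $G^a$ into a Taylor piece, pieces containing $\mathcal R^{(q)}$, and truncated forest products. This is a genuine improvement. The paper carries only the pointwise statement $\mathcal P_q$ and gets the two-parameter bounds on $E^s_{a,q}$ from a sketched cancellation in $(E^s_{a,q})'$. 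By contrast, your product pieces and the propagation of the hypothesis via Lemma~\ref{prop:integral-controlled} go through exactly as in Steps~1--2 of Proposition~\ref{prop:well-posedness-controlled-coefficients}.

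The step that fails as written is the one you flag yourself. Lemma~\ref{lem:composition} needs the composed map in $C_b^2$, so applying it to $D^{q+1}F_a$ requires $F_a\in C_b^{q+3}$. At the last step $q=p-1$ this is $C_b^{p+2}$, one derivative more than assumed. Concretely, the H\"older seminorm of the Gubinelli derivative of $D^pF_a(Y_s+\theta Y_{s,r})$ needs a Lipschitz bound on $D^{p+1}F_a$. The crude bound using only $\|D^{p+1}F_a\|_\infty$ gives increments of size $|t-s|^{p\alpha}$, not $|t-s|^{(p-2)\alpha}|v-u|^{\alpha}$. Your proposed remedy changes nothing: at the step $q=p-1$ you already expand to order $p-1$, and the remainder already involves $D^pF_a$.

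The repair is to avoid the integral form and write the Taylor piece as $\Phi(Y_r)$, where $\Phi(y):=F_a(y)-\sum_{n=0}^{q}\frac{1}{n!}D^nF_a(Y_s)[(y-Y_s)^{\otimes n}]$. Since $D\Phi$ and $D^2\Phi$ are Taylor remainders of $DF_a$ and $D^2F_a$, you get $|D\Phi(y)|\lesssim|y-Y_s|^{q}$ and $|D^2\Phi(y)|\lesssim|y-Y_s|^{(q-1)_+}$ using only $\|D^{q+1}F_a\|_\infty$ and $\|D^2F_a\|_\infty$. Take $(\Phi(Y))'=D\Phi(Y)Y'$ and
\[
R^{\Phi(Y)}_{u,v}=D\Phi(Y_u)R^Y_{u,v}+\int_0^1(1-\theta)D^2\Phi(Y_u+\theta Y_{u,v})[Y_{u,v},Y_{u,v}]\,d\theta .
\]
All points involved lie within $C|t-s|^\alpha$ of $Y_s$, so the bounds $|(\Phi(Y))'_r|\lesssim|r-s|^{q\alpha}$ and $\|(\Phi(Y))'\|_{\alpha;[s,t]}+\|R^{\Phi(Y)}\|_{2\alpha;[s,t]}\lesssim|t-s|^{(q-1)_+\alpha}$ follow directly. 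This needs only $F_a\in C_b^{\max\{q+1,2\}}$, well within $C_b^{p+1}$. With this replacement your induction closes under the stated hypothesis.
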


\begin{proof}
Fix \(0\le s\le t\le T\). By \meqref{eq:controlled-driven-fixed-point}, we see that
\begin{equation}
\label{eq:solution-integral-start}
Y_t-Y_s
=
\sum_{a=1}^m
\sint_s^t F_a(Y_r)\,d\bZ_r^a.
\end{equation}
For \(q=0,\ldots,p\), let \(\mathcal P_q\) denote the assertion that, uniformly for
\(r\in[s,t]\),
\begin{equation}
\label{eq:induction-statement}
Y_r
=
Y_s
+
\sum_{\tau\in\mathcal T_{\leq q}}
\frac{1}{\sigma(\tau)}
F(\tau)(Y_s)\,
 Z^\tau_{s,r}
+
O(|r-s|^{(q+1)\alpha}).
\end{equation}
Here and afterward, the remaining estimates are uniform for \(0\le s\le r\le t\le T\).
For \(q=0\), the sum over \(\mathcal T_{\le 0}\) is empty. Since \(Y\) is \(\alpha\)-H\"older,
$
|Y_r-Y_s|\lesssim |r-s|^\alpha.
$
Thus \(\mathcal P_0\) holds.
Assume that \(\mathcal P_q\) is true for some \(0\le q\le p-1\). We prove
\(\mathcal P_{q+1}\). 

Fix \(a\in\mathcal A\). Taylor's formula at \(Y_s\) gives 
\[
F_a(Y_r)
=
\sum_{n=0}^{q}
\frac{1}{n!}
D^nF_a(Y_s)
\bigl[
Y_{s,r},\ldots,Y_{s,r}
\bigr]
+
O(|Y_{s,r}|^{q+1}).
\]
Since \(Y\) is \(\alpha\)-H\"older continuous,
$
|Y_{s,r}|^{q+1}
\lesssim
|r-s|^{(q+1)\alpha}.
$
Using the induction hypothesis \eqref{eq:induction-statement}, we substitute the
tree expansion of \(Y_{s,r}\) into the Taylor polynomial. The products of the tree
terms are encoded by forests. Keeping all products in total tree order at most
\(q\), we obtain
\begin{equation}
\label{eq:integrand-forest-expansion}
F_a(Y_r)
=
\sum_{\substack{\eta\in\mathcal F^0\\ |\eta|\le q}}
\frac{1}{\sigma([\eta]_a)}
F([\eta]_a)(Y_s)\,
 Z^\eta_{s,r}
+
O(|r-s|^{(q+1)\alpha}).
\end{equation}
The coefficient
\(\sigma([\eta]_a)^{-1}\)
in \eqref{eq:integrand-forest-expansion}
follows from the same combinatorial argument as in the definition of the
symmetry factor. Indeed, if
\(\eta=\rho_1^{r_1}\cdots\rho_\ell^{r_\ell}\),
then the multinomial coefficient from the Taylor expansion cancels the
factor \(n!\) and gives
\[
\frac{1}{r_1!\cdots r_\ell!}
\prod_{j=1}^{\ell}
\frac{1}{\sigma(\rho_j)^{r_j}}
=
\frac{1}{\sigma([\eta]_a)}.
\]

We now justify the integration of the remainder term in \eqref{eq:integrand-forest-expansion}. For this, define
\[
E_{a,q}^s(r)
:=
F_a(Y_r)
-
\sum_{\substack{\eta\in\mathcal F^0\\ |\eta|\le q}}
\frac{1}{\sigma([\eta]_a)}
F([\eta]_a)(Y_s)\,
Z^\eta_{s,r},
\quad r\in[s,t].
\]
Since \( Z^\etree_{s,s}=1\) and \( Z^\eta_{s,s}=0\) for every
non-empty forest \(\eta\), it follows that
$
E_{a,q}^s(s)=0.
$
Then \eqref{eq:integrand-forest-expansion} can be written as
\[
F_a(Y_r)
=
\sum_{\substack{\eta\in\mathcal F^0\\ |\eta|\le q}}
\frac{1}{\sigma([\eta]_a)}
F([\eta]_a)(Y_s)\,
Z^\eta_{s,r}
+
E_{a,q}^s(r).
\]

By Lemma~\ref{lem:composition} and
Proposition~\ref{prop:well-posedness-controlled-coefficients}, 
\(F_a(Y)\) and $
\mathbf Z^\eta_{s,\bigcdot}
=
\left(
Z^\eta_{s,\bigcdot},
(Z^\eta_{s,\bigcdot})'
\right)
$ 
for every
\(\eta\in\mathcal F^0\) are \(\bX\)-controlled. Hence
\((E_{a,q}^s,(E_{a,q}^s)')\) is also an \(\bX\)-controlled rough path on
\([s,t]\).
For \(q=0\), the required estimates follow directly from the composition
estimate applied to
\(E_{a,0}^s(r)=F_a(Y_r)-F_a(Y_s)\). In the remainder of the
argument, let \(1\le q\le p-1\).
The Gubinelli derivative of \(E_{a,q}^s\) is
$$
(E_{a,q}^s)'_r
=
DF_a(Y_r)Y'_r-
\sum_{\substack{\eta\in\mathcal F^0\\ |\eta|\le q}}
\frac{1}{\sigma([\eta]_a)}
F([\eta]_a)(Y_s)
(Z^\eta_{s,\bigcdot})'_r.
$$
Since
$
Y'_r=\sum_{b=1}^{m}F_b(Y_r)(Z^b)'_r,
$
the first term becomes
$$
DF_a(Y_r)Y'_r
=
\sum_{b=1}^{m}
DF_a(Y_r)F_b(Y_r)(Z^b)'_r.
$$
For the empty forest, the corresponding derivative is zero.
For each \(b=1,\ldots,m\), expand
\(DF_a(Y_r)F_b(Y_r)\) at \(Y_s\) up to order \(q-1\), and substitute
the induction expansion of \(Y_{s,r}\). By the same calculation as in~\eqref{eq:integrand-forest-expansion},
the forest terms of order at most \(q-1\) cancel. Therefore, only terms
of order at least \(q\) and the Taylor remainder
\(O(|Y_{s,r}|^q)\) remain. Since
$
|Y_{s,r}|^q\le C|r-s|^{q\alpha},
$
Proposition~\ref{prop:well-posedness-controlled-coefficients} and the
boundedness of \((Z^b)'\) yield
\[
|(E_{a,q}^s)'_r|
\le
C|r-s|^{q\alpha}.
\]
Together with~\eqref{eq:integrand-forest-expansion}, the definition of
\(E_{a,q}^s\) gives
$
|E_{a,q}^s(r)|
\le
C|r-s|^{(q+1)\alpha}.
$

Let \(s\le u<v\le t\). Applying the above expansion at \(u\) and \(v\)
and using Proposition~\ref{prop:well-posedness-controlled-coefficients},
we obtain
$$
|(E_{a,q}^s)'_v-(E_{a,q}^s)'_u|
\le
C|t-s|^{(q-1)\alpha}|v-u|^\alpha.
$$
Hence
$$
\|(E_{a,q}^s)'\|_{\alpha;[s,t]}
\le
C|t-s|^{(q-1)\alpha}.
$$
Similarly, using the controlled expansions of \(F_a(Y)\) and
\(\mathbf Z^\eta_{s,\bigcdot}\), together with the same cancellation as
above, we obtain
$$
|R^{E_{a,q}^s}_{u,v}|
\le
C|t-s|^{(q-1)\alpha}|v-u|^{2\alpha}.
$$
Therefore,
\[
\|R^{E_{a,q}^s}\|_{2\alpha;[s,t]}
\le
C|t-s|^{(q-1)\alpha}.
\]
Combining these estimates with the case \(q=0\) gives
\[
\|(E_{a,q}^s)'\|_{\alpha;[s,t]}
+
\|R^{E_{a,q}^s}\|_{2\alpha;[s,t]}
\le
C|t-s|^{(q-1)_+\alpha}.
\]
Here the constant \(C\) is uniform for \(0\le s\le r\le t\le T\). Since
\(q\le p-1\), the required derivatives are bounded by the assumption
\(F_a\in C_b^{p+1}(\mathbb R^e;\mathbb R^e)\).
Applying Lemma~\ref{lem:controlled-remainder-integration} with
$
\beta=(q+1)\alpha
$
yields
\[
\left|
\sint_s^t E_{a,q}^s(r)\,d\bZ_r^a
\right|
\le
C|t-s|^{(q+2)\alpha}.
\]
Consequently, by Definition~\ref{def:controlled-forest-coefficients},  
\[
\begin{aligned}
\sint_s^t F_a(Y_r)\,d\bZ_r^a
&=
\sum_{\substack{\eta\in\mathcal F^0\\ |\eta|\le q}}
\frac{1}{\sigma([\eta]_a)}
F([\eta]_a)(Y_s)
\sint_s^t  Z^\eta_{s,r}\,d\bZ_r^a
+
O(|t-s|^{(q+2)\alpha})
\\
&=
\sum_{\substack{\eta\in\mathcal F^0\\ |\eta|\le q}}
\frac{1}{\sigma([\eta]_a)}
F([\eta]_a)(Y_s)
 Z^{[\eta]_a}_{s,t}
+
O(|t-s|^{(q+2)\alpha}).
\end{aligned}
\]
Summing over \(a=1,\ldots,m\) in \eqref{eq:solution-integral-start}, we get
\[
Y_t-Y_s
=
\sum_{a=1}^m
\sum_{\substack{\eta\in\mathcal F^0\\ |\eta|\le q}}
\frac{1}{\sigma([\eta]_a)}
F([\eta]_a)(Y_s)
 Z^{[\eta]_a}_{s,t}
+
O(|t-s|^{(q+2)\alpha}).
\]
Every tree \(\tau\in\mathcal T_{\le q+1}\) can be written uniquely as
$
\tau=[\eta]_a
$
for some \(a\in\mathcal A\) and some \(\eta\in\mathcal F^0\) satisfying
$
|\eta|\le q.
$
Hence
\[
Y_t
=
Y_s
+
\sum_{\tau\in\mathcal T_{\le q+1}}
\frac{1}{\sigma(\tau)}
F(\tau)(Y_s)
Z^\tau_{s,t}
+
O(|t-s|^{(q+2)\alpha}).
\]
This proves \(\mathcal P_{ q+1}\).
By induction, \(\mathcal P_p\) holds. Therefore,
\[
Y_t
=
Y_s
+
\sum_{\tau\in\mathcal T_{\le p}}
\frac{1}{\sigma(\tau)}
F(\tau)(Y_s)
 Z^\tau_{s,t}
+
R_{s,t}^{(p+1)},
\]
where
$$
|R_{s,t}^{(p+1)}|
\le
C_{p,T}|t-s|^{(p+1)\alpha}.
$$
The equivalent formulation
$
Y_t
=
B_p(1;Y_s,\mathcal Z_{s,t})
+
R_{s,t}^{(p+1)}
$
follows directly from Definition~\ref{def:controlled-b-series}. This completes the proof of Theorem~\ref{thm:exact-controlled-bseries}.
\end{proof}

We next introduce an abstract controlled B-series form for a generic one-step
approximation. This provides the mechanism for deriving local order conditions
by comparison with the exact expansion in
Theorem~\ref{thm:exact-controlled-bseries}.

\begin{definition}
\label{def:numerical-controlled-bseries}
Let $p\in \NN$ and
$
\Psi=\{\Psi_{s,t}\}_{0\le s\le t\le T}
$ be a family of one-step maps 
$
\Psi_{s,t}:\mathbb R^e\to\mathbb R^e
$. We say that \(\Psi\) \emph{admits a controlled B-series
expansion of order \(p\)} if there exist a weight map
\(\phi_\Psi:\mathcal T_{\leq p}\to\mathbb R\) and remainder maps
\(R_{\Psi,s,t}^{(p+1)}:\mathbb R^e\to\mathbb R^e\) such that, for all
\(y\in\mathbb R^e\) and \(0\le s\le t\le T\),
\begin{equation}\label{eq:numerical-controlled-bseries}
\Psi_{s,t}(y)
=
B_p(\phi_\Psi;y,\mathcal Z_{s,t})
+
R_{\Psi,s,t}^{(p+1)}(y)
=
y+
\sum_{\tau\in\mathcal T_{\le p}}
\frac{\phi_\Psi(\tau)}{\sigma(\tau)}
F(\tau)(y) Z^\tau_{s,t}
+
R_{\Psi,s,t}^{(p+1)}(y).
\end{equation}
The remainder is assumed to satisfy
\begin{equation*}
|R_{\Psi,s,t}^{(p+1)}(y)|
\le
C_{p,T}(1+|y|)|t-s|^{(p+1)\alpha},
\end{equation*}
where \(C_{p,T}>0\) is independent of \(s,t\) and \(y\).
\end{definition}

\begin{proposition}
\label{prop:tree-matching}
Let $(Y, Y')$ be the exact solution of \eqref{eq:component-controlled-rde} and
\(\Psi\)  admit a controlled B-series
expansion of order \(p\). If
$ \phi_\Psi(\tau)=1$ for $\tau\in\mathcal T_{\le p},
$
then the local truncation error satisfies
\begin{equation}
\label{eq:local-truncation-error}
|Y_t-\Psi_{s,t}(Y_s)|
\le
C_{p,T}(1+|Y_s|)\,|t-s|^{(p+1)\alpha},
\qquad 0\le s\le t\le T.
\end{equation}
\end{proposition}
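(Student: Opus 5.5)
The plan is to compare the two expansions at the same base point $y=Y_s$ and observe that the tree parts cancel exactly. By Theorem~\ref{thm:exact-controlled-bseries}, for $0\le s\le t\le T$,
\[
Y_t=B_p(1;Y_s,\mathcal Z_{s,t})+R^{(p+1)}_{s,t},\qquad |R^{(p+1)}_{s,t}|\le C_{p,T}|t-s|^{(p+1)\alpha}.
\]
Since $\Psi$ admits a controlled B-series expansion of order $p$ (Definition~\ref{def:numerical-controlled-bseries}), evaluating \eqref{eq:numerical-controlled-bseries} at $y=Y_s$ gives
\[
\Psi_{s,t}(Y_s)=B_p(\phi_\Psi;Y_s,\mathcal Z_{s,t})+R^{(p+1)}_{\Psi,s,t}(Y_s).
\]

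Under the hypothesis $\phi_\Psi(\tau)=1$ for all $\tau\in\mathcal T_{\le p}$, both truncated series are identical finite sums. They contain the same elementary differentials $F(\tau)(Y_s)$, the same symmetry factors $\sigma(\tau)$, and the same controlled coefficients $Z^\tau_{s,t}$. Hence $B_p(\phi_\Psi;Y_s,\mathcal Z_{s,t})=B_p(1;Y_s,\mathcal Z_{s,t})$. Subtracting the two expansions then yields
\[
Y_t-\Psi_{s,t}(Y_s)=R^{(p+1)}_{s,t}-R^{(p+1)}_{\Psi,s,t}(Y_s).
\]

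The two remainders are bounded separately. The numerical remainder is controlled by the bound in Definition~\ref{def:numerical-controlled-bseries}:
\[
|R^{(p+1)}_{\Psi,s,t}(Y_s)|\le C_{p,T}(1+|Y_s|)|t-s|^{(p+1)\alpha}.
\]
The exact remainder carries no factor $1+|Y_s|$. Since $1\le 1+|Y_s|$, we have $C_{p,T}|t-s|^{(p+1)\alpha}\le C_{p,T}(1+|Y_s|)|t-s|^{(p+1)\alpha}$. Applying the triangle inequality and renaming the constant gives \eqref{eq:local-truncation-error}.

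There is no genuine obstacle here; the only point needing care is bookkeeping of constants. The constant in Theorem~\ref{thm:exact-controlled-bseries} must be uniform in $0\le s\le t\le T$, which that proof provides. The constant in Definition~\ref{def:numerical-controlled-bseries} must be independent of $s$, $t$ and $y$, which holds by assumption. The resulting $C_{p,T}$ is simply the sum of the two. Finiteness of $\mathcal T_{\le p}$ guarantees that the truncated series are finite sums, so the termwise identification of the two series is legitimate.
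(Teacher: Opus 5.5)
Your proposal is correct and follows the paper's proof essentially step for step. You evaluate both expansions at $y=Y_s$, cancel the tree sums using $\phi_\Psi(\tau)=1$ on $\mathcal T_{\le p}$, and bound the two remainders, absorbing the exact remainder via $1\le 1+|Y_s|$. Both your argument and the paper's implicitly rely on the hypotheses $F_a\in C_b^{p+1}$ and $(p+1)\alpha>1$ of Theorem~\ref{thm:exact-controlled-bseries}.
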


\begin{proof}
By Theorem~\ref{thm:exact-controlled-bseries}, we see that
$$
Y_t
=
Y_s
+
\sum_{\tau\in\mathcal T_{\le p}}
\frac{1}{\sigma(\tau)}
F(\tau)(Y_s)\, Z^\tau_{s,t}
+
R_{s,t}^{(p+1)},
$$
where
$
|R_{s,t}^{(p+1)}|
\le
C_{p,T}|t-s|^{(p+1)\alpha}.
$
On the other hand, from \eqref{eq:numerical-controlled-bseries} with
\(y=Y_s\), it follows that
\[
\Psi_{s,t}(Y_s)
=
Y_s
+
\sum_{\tau\in\mathcal T_{\le p}}
\frac{\phi_\Psi(\tau)}{\sigma(\tau)}
F(\tau)(Y_s)\, Z^\tau_{s,t}
+
R_{\Psi,s,t}^{(p+1)}(Y_s).
\]
Subtracting the two identities gives
\[
Y_t-\Psi_{s,t}(Y_s)
=
\sum_{\tau\in\mathcal T_{\le p}}
\frac{1-\phi_\Psi(\tau)}{\sigma(\tau)}
F(\tau)(Y_s)\, Z^\tau_{s,t}
+
R_{s,t}^{(p+1)}
-
R_{\Psi,s,t}^{(p+1)}(Y_s).
\]
Since \(\phi_\Psi(\tau)=1\) for every \(\tau\in\mathcal T_{\le p}\), the sum vanishes term by term. Hence,
\[
|Y_t-\Psi_{s,t}(Y_s)|
\le
|R_{s,t}^{(p+1)}|
+
|R_{\Psi,s,t}^{(p+1)}(Y_s)|
\le
C_{p,T}(1+|Y_s|)\,|t-s|^{(p+1)\alpha},
\]
which completes the proof of \eqref{eq:local-truncation-error}.
\end{proof}

\section{Controlled Runge-Kutta methods}
\label{sec:runge-kutta-methods}
In this section, we introduce controlled Runge-Kutta methods whose coefficients may depend on the controlled coefficients $\{Z_{u,v}^{\tau}:\tau\in\mathcal T_{
\le p}\}. $ The exact controlled B-series expansion in Theorem~\ref{thm:exact-controlled-bseries} identifies the tree coefficients that a one-step approximation must reproduce. By comparing the B-series expansion of the numerical method with this exact expansion, we derive the corresponding order conditions and local error estimates.

Let \([u,v]\subset[0,T]\) be a one-step interval. We use a generic interval
because the local truncation error analysis is performed locally.

\begin{definition} (Controlled Runge-Kutta methods)
\label{def:controlled-rk}
Fix \(q\in\mathbb N\). For each
\([u,v]\subset[0,T]\) and \(a\in\mathcal A\), let
\(
\mathcal Z_{u,v}^{(a)}\in\mathbb R^{q\times q}\) and 
\(\zeta_{u,v}^{(a)}\in\mathbb R^q,
\)
whose entries may depend on the stepwise coefficients
\(
\{ Z^\eta_{u,v}:\eta\in\mathcal F_{\leq p}^0\}.
\)
For \(y\in\mathbb R^e\), the \emph{stage values} \(Y_i^{u,v}(y)\in\mathbb R^e\) are defined by  
\begin{equation}
\label{eq:controlled-rk-stages}
Y_i^{u,v}(y)
:=
y+
\sum_{a=1}^m
\sum_{j=1}^q
\zuvij
F_a\bigl(Y_j^{u,v}(y)\bigr),
\qquad
i=1,\ldots,q.
\end{equation}
The associated \emph{controlled Runge-Kutta map} $\Psi_{u,v}^{\crk}:\RR^e\rightarrow \RR^e$ is given by  
\begin{equation}
\label{eq:controlled-rk-one-step}
\Psi_{u,v}^{\crk}(y)
:=
y+
\sum_{a=1}^m
\sum_{i=1}^q
\zeuvi
F_a\bigl(Y_i^{u,v}(y)\bigr).
\end{equation}
\end{definition}

\begin{remark}
If the driving signal is reduced to the time variable,
i.e. \(m=1\) and \(Z_t=t\), and the controlled Runge-Kutta
coefficients are chosen as
\[
(\mathcal Z^{(a)}_{u,v})_{i,j}=(v-u)a_{ij},
\qquad
(\zeta^{(a)}_{u,v})_i=(v-u)b_i,
\]
where \(a_{ij}\) and \(b_i\) are the classical Runge-Kutta
coefficients, then the controlled Runge-Kutta scheme reduces to
the classical Runge-Kutta method with step size \(v-u\).
\end{remark}
 
\begin{assumption}
\label{ass:rk-solvability-size}
Let \(|\cdot|_\infty\) be the max norm on \(\mathbb R^q\) and 
\(\|\cdot\|_\infty\) the matrix norm on \(\mathbb R^{q\times q}\).
We assume that there exists a constant \(C_{\mathrm{CRK}}>0\) such that, for every
\([u,v]\subset[0,T]\) and every 
\(a\in\mathcal A\),
\begin{equation}
\label{eq:rk-coefficient-size}
\|\mathcal Z_{u,v}^{(a)}\|_\infty
+
|\zeta_{u,v}^{(a)}|_\infty
\le
C_{\mathrm{CRK}}|v-u|^\alpha.
\end{equation}
We also assume that the stage system
\eqref{eq:controlled-rk-stages} admits a solution on each one-step interval under consideration.
\end{assumption}
This assumption will be verified for the specific coefficients considered
below. To compare the numerical one-step map with the exact controlled
B-series, we introduce the associated coefficients following the
algebraic viewpoint of Runge-Kutta methods \cite{Butcher1972}. These
coefficients are the numerical counterparts of the coefficients
\(Z^\tau_{u,v}\) in the exact expansion and may depend on controlled
coefficients in the present setting.
We use \(\odot\) to denote the componentwise product on \(\mathbb R^q\):
\[
(x\odot y)_i :=x_i y_i,\qquad x,y\in\mathbb R^q, \qquad  i=1,\ldots,q,
\]
with the convention that the empty product is
\(\mathbf 1_q=(1,\ldots,1)^\top \in \RR^q\).

\begin{definition}\label{def:rk-tree-coefficients}
For \(a\in\mathcal A\), let
\(
\mathcal Z_{u,v}^{(a)}\in\mathbb R^{q\times q}\) and 
\(\zeta_{u,v}^{(a)}\in\mathbb R^q.
\)
For \(0\le u\le v\le T\), the \emph{recursive coefficient maps}
\(\Phi_{u,v}:\mathcal T^0\to\mathbb R^q\) and
\(a_{u,v}:\mathcal T^0\to\mathbb R\)
are defined as follows.
\begin{enumerate}
\item[\rm(i)]
For the empty tree \(\etree\), let
\[
\Phi_{u,v}(\etree) :=\mathbf 1_q,
\qquad
a_{u,v}(\etree) :=1 .
\]

\item[\rm(ii)]
For a tree
\(
\tau=[\tau_1\cdots\tau_n]_a,
\)
set
\begin{equation}
\label{eq:stage-tree-coefficient}
\Phi_{u,v}(\tau)
:=
\mathcal Z_{u,v}^{(a)}
\bigl(
\Phi_{u,v}(\tau_1)\odot\cdots\odot
\Phi_{u,v}(\tau_n)
\bigr),
\qquad
a_{u,v}(\tau)
:=
\left\langle
\zeta_{u,v}^{(a)},
\Phi_{u,v}(\tau_1)\odot\cdots\odot
\Phi_{u,v}(\tau_n)
\right\rangle_{\mathbb R^q}.
\end{equation}
\end{enumerate}
\end{definition}

In particular, for the single-node tree \(\bullet_a=[\etree]_a\),  
\[
\Phi_{u,v}(\bullet_a)
=
\mathcal Z_{u,v}^{(a)}\mathbf 1_q,
\qquad
a_{u,v}(\bullet_a)
=
\left\langle
\zeta_{u,v}^{(a)},\mathbf 1_q
\right\rangle_{\mathbb R^q}.
\]

\begin{proposition}
\label{prop:rk-tree-coefficient-scaling}
For every \(p\in\mathbb Z_{\ge1}\),
there exist constants $C_{p,\mathrm{CRK}}$  and $\widetilde{C}_{p,\mathrm{CRK}}>0$ such that, for all
\(\tau\in\mathcal T_{\le p}^0\) and all \(0\le u\le v\le T\),
\begin{equation}
\label{eq:stage-coefficient-scaling}
|\Phi_{u,v}(\tau)|_\infty
\le
C_{p,\mathrm{CRK}}|v-u|^{|\tau|\alpha},
\qquad
|a_{u,v}(\tau)|
\le
\widetilde{C}_{p,\mathrm{CRK}}|v-u|^{|\tau|\alpha}.
\end{equation}
\end{proposition}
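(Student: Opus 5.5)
Since $\Phi_{u,v}$ and $a_{u,v}$ are defined recursively over the tree structure (Definition~\ref{def:rk-tree-coefficients}), I will argue by strong induction on the order $|\tau|$, using only the size bound \eqref{eq:rk-coefficient-size} from Assumption~\ref{ass:rk-solvability-size}.

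\textbf{Base case.} For $\tau=\etree$ we have $|\Phi_{u,v}(\etree)|_\infty=|\mathbf 1_q|_\infty=1=|v-u|^{0}$ and $a_{u,v}(\etree)=1$, so both bounds hold with constant $1$.

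\textbf{Inductive step.} Let $\tau=[\tau_1\cdots\tau_n]_a$ with $|\tau|\le p$, so that $|\tau|=1+\sum_k|\tau_k|$ and each $|\tau_k|\le p-1$. For the componentwise product on $\mathbb R^q$ we have the elementary estimate $|x\odot y|_\infty\le |x|_\infty|y|_\infty$ (and for $n=0$ the empty product is $\mathbf 1_q$). The induction hypothesis then gives
\[
\bigl|\Phi_{u,v}(\tau_1)\odot\cdots\odot\Phi_{u,v}(\tau_n)\bigr|_\infty\le C^n|v-u|^{(|\tau|-1)\alpha}.
\]
Combining this with $\|\mathcal Z^{(a)}_{u,v}\|_\infty\le C_{\mathrm{CRK}}|v-u|^\alpha$ (the induced max-row-sum norm is compatible with $|\cdot|_\infty$) yields the bound on $\Phi_{u,v}(\tau)$ with exponent $|\tau|\alpha$.

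For $a_{u,v}(\tau)$, the inner product is bounded by $|\langle x,y\rangle|\le q|x|_\infty|y|_\infty$. Using $|\zeta^{(a)}_{u,v}|_\infty\le C_{\mathrm{CRK}}|v-u|^\alpha$, this gives the second bound, with the factor $q$ absorbed into $\widetilde C_{p,\mathrm{CRK}}$.

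\textbf{Uniformity of constants.} The only bookkeeping issue is making the constants uniform in $\tau$. The set $\mathcal T^0_{\le p}$ is finite, since the decorations range over the finite set $\mathcal A$, so I will take maxima over it. Alternatively, an explicit choice works: $C_{p,\mathrm{CRK}}=\max(1,C_{\mathrm{CRK}})^{p}$ propagates through the recursion, because the number of multiplications by $C_{\mathrm{CRK}}$ along the recursion equals the number of vertices.

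The exponents need no adjustment: no factor $T^{\cdot}$ is required, since every power of $|v-u|$ arises exactly as $|\tau|\alpha$. I therefore expect no real obstacle. The only point requiring care is checking that the norms used are submultiplicative and consistent with each other, namely $\|\cdot\|_\infty$ as the operator norm induced by $|\cdot|_\infty$, and the bound $|x\odot y|_\infty\le|x|_\infty|y|_\infty$ for the Hadamard product.
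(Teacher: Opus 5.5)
Your proposal is correct and follows essentially the same route as the paper: induction on $|\tau|$ from the base case $\etree$. It uses the same three bounds, $\|\mathcal Z^{(a)}_{u,v}\|_\infty\le C_{\mathrm{CRK}}|v-u|^\alpha$, $|x\odot y|_\infty\le|x|_\infty|y|_\infty$ and $|\langle\zeta,x\rangle|\le q|\zeta|_\infty|x|_\infty$, and gets uniform constants by taking maxima over the finite set $\mathcal T^0_{\le p}$ (your explicit alternative also works, since the recursion gives exactly $C_\tau=C_{\mathrm{CRK}}^{|\tau|}$ and $\widetilde C_\tau=qC_{\mathrm{CRK}}^{|\tau|}$).
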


\begin{proof}
We use induction on the order \(|\tau|\geq0\).
The estimates are first established for each fixed tree \(\tau\).
Since \(\mathcal T_{\le p}^0\) is finite for fixed \(p\), the corresponding
constants can be chosen uniformly over all
\(\tau\in\mathcal T_{\le p}^0\).

For the empty tree \(\tau=\etree\), we have \(|\etree|=0\) by
\eqref{C3-1}. Definition~\ref{def:rk-tree-coefficients} gives
$
\Phi_{u,v}(\etree)=\mathbf 1_q
$ and $
a_{u,v}(\etree)=1.
$
Since \(|\mathbf 1_q|_\infty=1\), both estimates hold for
\(\tau=\etree\).
Now let $\tau=[\tau_1\cdots\tau_n]_a$ with $n\geq 0$ and $a\in \mathcal{A}$. Then 
$$
|\tau|=1+\sum_{j=1}^n|\tau_j|.
$$
Suppose that the estimates hold for all trees of order strictly
smaller than \(|\tau|\). By
\eqref{eq:stage-tree-coefficient}, the compatibility of the induced
matrix norm with the max norm, and
$$
\left|
x^{(1)}\odot\cdots\odot x^{(n)}
\right|_\infty
\leq
\prod_{j=1}^n |x^{(j)}|_\infty,
$$
we obtain
$$
|\Phi_{u,v}(\tau)|_\infty
\leq
\|\mathcal Z_{u,v}^{(a)}\|_\infty
\prod_{j=1}^n
|\Phi_{u,v}(\tau_j)|_\infty
\leq
C_{\mathrm{CRK}}|v-u|^\alpha
\prod_{j=1}^n
C_{\tau_j}|v-u|^{|\tau_j|\alpha}
=
C_\tau |v-u|^{|\tau|\alpha},
$$
where
$
C_\tau
:=
C_{\mathrm{CRK}}\prod_{j=1}^n C_{\tau_j}.
$
Similarly, by \eqref{eq:stage-tree-coefficient} and
$
\left|
\langle\zeta,x\rangle_{\mathbb R^q}
\right|
\leq
q|\zeta|_\infty|x|_\infty,
$
we have
$$
|a_{u,v}(\tau)|
\leq
q|\zeta_{u,v}^{(a)}|_\infty
\prod_{j=1}^n
|\Phi_{u,v}(\tau_j)|_\infty
\leq
qC_{\mathrm{CRK}}|v-u|^\alpha
\prod_{j=1}^n
C_{\tau_j}|v-u|^{|\tau_j|\alpha}
=
\widetilde C_\tau |v-u|^{|\tau|\alpha},
$$
where
$
\widetilde C_\tau
:=
qC_{\mathrm{CRK}}\prod_{j=1}^n C_{\tau_j}.
$
This completes the induction.
Since \(\mathcal T_{\le p}^0\) is finite for each fixed \(p\), we may set
\[
C_{p,\mathrm{CRK}}
:=
\max_{\tau\in\mathcal T_{\le p}^0}C_\tau,
\qquad
\widetilde C_{p,\mathrm{CRK}}
:=
\max_{\tau\in\mathcal T_{\le p}^0}\widetilde C_\tau.
\]
The estimates
\eqref{eq:stage-coefficient-scaling} then hold uniformly for all
\(\tau\in\mathcal T_{\le p}^0\).
\end{proof}

We now derive the controlled B-series expansion of the
numerical map \(\Psi_{u,v}^{\crk}\) in~\eqref{eq:controlled-rk-one-step}. The argument is the usual rooted-tree Taylor
expansion for Runge-Kutta methods, with the stepwise matrices and weights
\(\mathcal Z_{u,v}^{(a)}\) and \(\zeta_{u,v}^{(a)}\) replacing the classical
Runge-Kutta coefficients.

\begin{theorem} (Numerical controlled B-series expansion of the controlled Runge-Kutta map) \label{thm:rk-bseries} 
For $(p+1) \alpha>1$, let $
F_a\in C_b^{p+1}(\mathbb R^e;\mathbb R^e) 
$ with $a\in\mathcal A$. Suppose that Assumption~\ref{ass:rk-solvability-size} holds. 
Then, for any \(R>0\), there exists a constant \(C_{p,R,T}>0\) such that, for any
\(0\le u\le v\le T\), and any initial state
\(y\in\mathbb R^e\) for the Runge-Kutta step on \([u,v]\)
satisfying \(|y|\le R\), the stage values in~\eqref{eq:controlled-rk-stages} satisfy
\begin{equation}
\label{eq:stage-bseries-expansion}
Y_i^{u,v}(y)
=
y+
\sum_{\tau\in\mathcal T_{\leq p}}
\frac{\bigl(\Phi_{u,v}(\tau)\bigr)_i}{\sigma(\tau)}
F(\tau)(y)
+
\mathcal R_{i,u,v}^{(p+1)}(y), \qquad i=1,\ldots, q,
\end{equation}
where \(F(\tau)\) is the elementary differential associated with the
vector fields \(F_a\) defined in Definition~\ref{def:elementary-differentials}, and the controlled Runge-Kutta map $\Psi_{u,v}^{\crk}$ in~\eqref{eq:controlled-rk-one-step} satisfies
\begin{equation}
\label{eq:rk-bseries-expansion}
\Psi_{u,v}^{\crk}(y)
=
y+
\sum_{\tau\in\mathcal T_{\leq p}}
\frac{a_{u,v}(\tau)}{\sigma(\tau)}
F(\tau)(y)
+
\mathcal R_{u,v}^{(p+1)}(y). 
\end{equation}
Moreover,  
\begin{equation}
\label{eq:rk-bseries-remainder}
\max_{1\le i\le q}
|
\mathcal R_{i,u,v}^{(p+1)}(y)
|
+
|
\mathcal R_{u,v}^{(p+1)}(y)
|
\le
C_{p,R,T}|v-u|^{(p+1)\alpha}.
\end{equation}
\end{theorem}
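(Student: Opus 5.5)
We will prove \eqref{eq:stage-bseries-expansion} by induction on a truncation level $q=0,1,\ldots,p$, following the classical rooted-tree Taylor expansion. The key difference from the classical case is that the small parameter is $|v-u|^\alpha$, which enters through Assumption~\ref{ass:rk-solvability-size} and Proposition~\ref{prop:rk-tree-coefficient-scaling}. Let $\mathcal S_q$ be the assertion that, for $|y|\le R$,
\[
Y_i^{u,v}(y)=y+\sum_{\tau\in\mathcal T_{\le q}}\frac{(\Phi_{u,v}(\tau))_i}{\sigma(\tau)}F(\tau)(y)+O(|v-u|^{(q+1)\alpha}),
\]
uniformly in $i$, $u\le v$ and $|y|\le R$.

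\textbf{Base case.} For $q=0$, \eqref{eq:controlled-rk-stages} and boundedness of $F_a$ give $|Y_i^{u,v}(y)-y|\le mqC_{\mathrm{CRK}}\|F\|_\infty|v-u|^\alpha$. This step uses only that \emph{some} stage solution exists, which Assumption~\ref{ass:rk-solvability-size} provides. No a priori bound on the stages beyond this is needed, because $F_a\in C_b^{p+1}$ has bounded derivatives globally. The dependence on $R$ is therefore harmless, and may even be absent.

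\textbf{Inductive step.} Assuming $\mathcal S_q$ with $q\le p-1$, expand $F_a(Y_j^{u,v}(y))$ by Taylor's formula at $y$ to order $q$. The Taylor remainder is $O(|Y_j-y|^{q+1})=O(|v-u|^{(q+1)\alpha})$. Next substitute the expansion of $Y_j-y$ from $\mathcal S_q$ into each multilinear term $D^nF_a(y)[\cdot,\ldots,\cdot]$. This produces products of stage coefficients $(\Phi(\tau_1))_j\cdots(\Phi(\tau_n))_j$, which are exactly the components of $\Phi(\tau_1)\odot\cdots\odot\Phi(\tau_n)$. Terms with total order $\sum|\tau_k|>q$ are $O(|v-u|^{(q+1)\alpha})$ by Proposition~\ref{prop:rk-tree-coefficient-scaling}. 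Cross terms involving the $O(|v-u|^{(q+1)\alpha})$ remainder of $\mathcal S_q$ are at least as small, since $D^nF_a$ is bounded. The same multinomial and symmetry-factor bookkeeping used in \eqref{eq:integrand-forest-expansion} then gives
\[
F_a(Y_j^{u,v}(y))=\sum_{\eta\in\mathcal F^0,\,|\eta|\le q}\frac{1}{\sigma([\eta]_a)}F([\eta]_a)(y)\,\bigl(\Phi(\tau_1)\odot\cdots\odot\Phi(\tau_n)\bigr)_j+O(|v-u|^{(q+1)\alpha}),
\]
where $\eta=\tau_1\cdots\tau_n$. Multiply by $(\mathcal Z^{(a)}_{u,v})_{i,j}$, sum over $j$ and $a$, and use $\|\mathcal Z^{(a)}_{u,v}\|_\infty\le C|v-u|^\alpha$. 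The remainder becomes $O(|v-u|^{(q+2)\alpha})$, and the main terms become $(\Phi_{u,v}([\eta]_a))_i$ by \eqref{eq:stage-tree-coefficient}. Since every $\tau\in\mathcal T_{\le q+1}$ is uniquely of the form $[\eta]_a$ with $|\eta|\le q$, this proves $\mathcal S_{q+1}$.

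\textbf{Output map and main obstacle.} Apply the step once more with $q=p-1$, now using the weights $\zeta^{(a)}_{u,v}$ in place of the rows of $\mathcal Z^{(a)}_{u,v}$. This gives \eqref{eq:rk-bseries-expansion} with $a_{u,v}(\tau)$, and the remainder bound uses $|\langle\zeta,x\rangle|\le q|\zeta|_\infty|x|_\infty$. The smoothness needed is at most $D^pF_a$, available since $F_a\in C_b^{p+1}$. The constants are uniform because $\mathcal T_{\le p}$ is finite and $|v-u|\le T$. The main obstacle is the combinatorial identification of the coefficient as $1/\sigma([\eta]_a)$ when forests have repeated trees. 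The multinomial coefficient must exactly cancel the $1/n!$ from Taylor's formula. I would handle this by citing the identical computation in the proof of Theorem~\ref{thm:exact-controlled-bseries}, rather than repeating it.
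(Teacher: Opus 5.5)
Your proposal is correct and follows essentially the same route as the paper: an a priori bound $|Y_i^{u,v}(y)-y|\le C|v-u|^\alpha$ from boundedness of $F_a$ and Assumption~\ref{ass:rk-solvability-size}, then Taylor expansion of $F_a(Y_j^{u,v}(y))$ at $y$, then induction on tree order. In that induction the coefficients are identified through $\odot$ and \eqref{eq:stage-tree-coefficient}, with the factor $1/\sigma(\tau)$ coming from the multinomial count, and the same substitution with $\zeta^{(a)}_{u,v}$ gives the output map. The differences are cosmetic: you re-expand to Taylor order $q$ at each induction level instead of fixing order $p-1$ once, and you reuse the letter $q$, which in the paper already denotes the number of stages.
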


\begin{proof} 
Fix $0 \leq u \leq v \leq T$. In the interval $[u, v]$, the matrices $\mathcal{Z}_{u, v}^{(a)}$ and the vectors $\zeta_{u, v}^{(a)}$ with $a\in \mathcal{A}$ are fixed numerical coefficients. Therefore, the classical rooted-tree Taylor expansion for Runge-Kutta methods applies to the stage equations; see, for example, \cite{HLW2006}.

By \eqref{eq:controlled-rk-stages}, we deduce that
$$
Y_i^{u, v}(y)-y=\sum_{a=1}^m \sum_{j=1}^q (\mathcal{Z}^{(a)}_{u, v})_{i,j}F_a(Y_j^{u, v}(y)) .
$$
Since $F_a \in C_b^{p+1}\left(\mathbb{R}^e ; \mathbb{R}^e\right)$, each $F_a$ is bounded. Assumption~\ref{ass:rk-solvability-size} gives
$
\|\mathcal{Z}_{u, v}^{(a)}\|_{\infty} \leq C_{\mathrm{CRK}}|v-u|^\alpha.
$
Consequently,
\begin{equation}\label{E4-2}
|Y_i^{u, v}(y)-y|  \leq \sum_{a=1}^m\|\mathcal{Z}_{u, v}^{(a)}\|_{\infty} \max _{1 \leq j \leq q}|F_a(Y_j^{u, v}(y))| 
 \leq \sum_{a=1}^m\|\mathcal{Z}_{u, v}^{(a)}\|_{\infty}\|F_a\|_{\infty} 
 \leq C|v-u|^\alpha,
\end{equation}
uniformly for $|y| \leq R$ and $i=1, \ldots, q$. Hence $$\max _{1 \leq i \leq q}|Y_i^{u, v}(y)-y| \leq C|v-u|^\alpha.$$
For all $a=1, \ldots, m$ and $j=1, \ldots, q$, Taylor's formula at $y$ yields
\begin{equation}
\label{E4-1}
F_a(Y_j^{u, v}(y))=  \sum_{n=0}^{p-1} \frac{1}{n!} D^n F_a(y)(Y_j^{u, v}(y)-y, \ldots, Y_j^{u, v}(y)-y) 
 +O(|Y_j^{u, v}(y)-y|^p) .
\end{equation}
Since
$
|Y_j^{u, v}(y)-y| \leq C|v-u|^\alpha,
$ the Taylor remainder satisfies  
$$
O(|Y_j^{u, v}(y)-y|^p)=O(|v-u|^{p \alpha}) .
$$
After multiplication by
$(\mathcal{Z}^{(a)}_{u, v})_{i,j}=O (|v-u|^\alpha )$, the corresponding contribution to \eqref{eq:controlled-rk-stages} is 
$
O (|v-u|^{(p+1) \alpha} ) .
$

We identify the terms in the Taylor expansion by induction on the tree order for $|\tau| \geq 1$.
For the one-node tree
$
\bullet_a=[\mathbf{1}]_a,
$
Definition~\ref{def:rk-tree-coefficients} gives
$
\Phi_{u, v}\left(\bullet_a\right)=\mathcal{Z}_{u, v}^{(a)} \mathbf{1}_q$ and $\mathbf{1}_q=(1, \ldots, 1)^{\top}.
$
By
$
F\left(\bullet_a\right)(y)=F_a(y)$ and
$\sigma\left(\bullet_a\right)=1,$ we have
$$
\sum_{a=1}^m \sum_{j=1}^q (\mathcal{Z}_{u, v}^{(a)})_{i, j} F_a(y) =\sum_{a=1}^m \frac{\left(\Phi_{u, v}\left(\bullet_a\right)\right)_i}{\sigma\left(\bullet_a\right)} F\left(\bullet_a\right)(y) .
$$
Thus, the zeroth-order Taylor term produces exactly the one-node trees.
The remaining part of the Taylor expansion in \eqref{E4-1} contains at least one factor $Y_j^{u, v}(y)-y$. 
The estimate in \eqref{E4-2} shows that
$$
|Y_j^{u, v}(y)-y| \leq C|v-u|^\alpha
.$$ 
After multiplication by $(\mathcal{Z}^{(a)}_{u, v})_{i, j}=O(|v-u|^\alpha)$, the remainder is $O(|v-u|^{2 \alpha})$. Hence,
\begin{equation}\label{E4-3}
Y_i^{u, v}(y)=y+\sum_{a=1}^m \frac{\left(\Phi_{u, v}\left(\bullet_a\right)\right)_i}{\sigma\left(\bullet_a\right)} F\left(\bullet_a\right)(y)+O (|v-u|^{2 \alpha} ) .
\end{equation}

To see how the next tree order is generated, consider the linear Taylor term
$
D F_a(y) (Y_j^{u, v}(y)-y ) .
$
From \eqref{E4-3}, the first-order expansion of the $j$-th stage increment is
$$
Y_j^{u, v}(y)-y=\sum_{b=1}^m \frac{\left(\Phi_{u, v}\left(\bullet_b\right)\right)_j}{\sigma\left(\bullet_b\right)} F\left(\bullet_b\right)(y)+O(|v-u|^{2 \alpha}) .
$$

Assume that for some $r \in\{1, \ldots, p-1\}$, all terms associated with trees of order at most $r$ have been identified, and
\begin{equation}\label{E4-7}
Y_j^{u, v}(y)-y=\sum_{\tau \in \mathcal{T}_{\le r}} \frac{\left(\Phi_{u, v}(\tau)\right)_j}{\sigma(\tau)} F(\tau)(y)+O (|v-u|^{(r+1) \alpha} )
\end{equation}
holds uniformly for $j=1, \ldots, q$ and $|y| \leq R$. We prove that the expansion can be extended to the tree order $r+1$.
Consider the $n$-th Taylor term
$$
\frac{1}{n!} D^n F_a(y)(Y_j^{u, v}(y)-y, \ldots, Y_j^{u, v}(y)-y) . $$
From the $n$ copies of $Y_j^{u, v}(y)-y$, choose the terms associated with the trees
$\tau_1, \ldots, \tau_n$.
According to the induction hypothesis \eqref{E4-7}, the term selected from the $\ell$-th copy is
$$
\frac{\left(\Phi_{u, v}\left(\tau_{\ell}\right)\right)_j}{\sigma\left(\tau_{\ell}\right)} F\left(\tau_{\ell}\right)(y).
$$
Before collecting terms corresponding to the same unordered collection of branches, the contribution to the \(i\)-th stage is
$$
\frac{1}{n!} \sum_{j=1}^q (\mathcal{Z}^{(a)}_{u, v})_{i, j} \prod_{\ell=1}^n \frac{\left(\Phi_{u, v}\left(\tau_{\ell}\right)\right)_j}{\sigma\left(\tau_{\ell}\right)} 
 D^n F_a(y)\left(F\left(\tau_1\right)(y), \ldots, F\left(\tau_n\right)(y)\right).
$$
Let
$
\tau=\left[\tau_1 \cdots \tau_n\right]_a .
$
By Definition~\ref{def:elementary-differentials}, we have
\begin{equation}\label{E4-8}
D^n F_a(y)\left(F\left(\tau_1\right)(y), \ldots, F\left(\tau_n\right)(y)\right) =F\left(\left[\tau_1 \cdots \tau_n\right]_a\right)(y) =F(\tau)(y) .
\end{equation}
The order of $\tau$ is
$
|\tau|=1+\sum_{\ell=1}^n\left|\tau_{\ell}\right|.
$
Hence, branches satisfying
\(\sum_{\ell=1}^n|\tau_\ell|\le r\)
generate trees of order at most \(r+1\),
whereas those satisfying
\(\sum_{\ell=1}^n|\tau_\ell|\ge r+1\)
generate trees of order at least \(r+2\) and are absorbed into the remainder. Next, we identify the numerical coefficient of $\tau$. The $j$-th component of the componentwise product satisfies
$$
\left(\Phi_{u, v}\left(\tau_1\right) \odot \cdots \odot \Phi_{u, v}\left(\tau_n\right)\right)_j=\prod_{\ell=1}^n\left(\Phi_{u, v}\left(\tau_{\ell}\right)\right)_j . 
$$
Therefore,
\begin{equation}\label{E4-9}
\begin{aligned}
\sum_{j=1}^q \mathcal{Z}_{u, v, i j}^{(a)} \prod_{\ell=1}^n\left(\Phi_{u, v}\left(\tau_{\ell}\right)\right)_j 
& =\sum_{j=1}^q \mathcal{Z}_{u, v, i j}^{(a)}\left(\Phi_{u, v}\left(\tau_1\right) \odot \cdots \odot \Phi_{u, v}\left(\tau_n\right)\right)_j  \\
& =\left[\mathcal{Z}_{u, v}^{(a)}\left(\Phi_{u, v}\left(\tau_1\right) \odot \cdots \odot \Phi_{u, v}\left(\tau_n\right)\right)\right]_i \\
& =\left(\Phi_{u, v}(\tau)\right)_i.
\end{aligned}
\end{equation}
It remains to determine the combinatorial factor.

Let \(\rho_1,\ldots,\rho_s\) be the distinct branches of \(\tau\), with
multiplicities \(m_1,\ldots,m_s\), where
\(m_1+\cdots+m_s=n\).
The same unordered collection of branches appears
$
\frac{n!}{m_1!\cdots m_s!}
$
times in the \(n\)-th Taylor term. Combining this multinomial factor with
the Taylor coefficient \(1/n!\) and the symmetry factors from the induction
hypothesis gives
\begin{equation}\label{E4-10}
\frac{1}{m_{1}!\cdots m_{s}!\sigma\left(\rho_1\right)^{m_1} \cdots \sigma\left(\rho_s\right)^{m_s}} =\frac{1}{\sigma(\tau)}. 
\end{equation}
Combining \eqref{E4-8}, \eqref{E4-9} and \eqref{E4-10}, the complete contribution associated with the tree $\tau=\left[\tau_1 \cdots \tau_n\right]_a$ is
$$
\frac{\left(\Phi_{u, v}(\tau)\right)_i}{\sigma(\tau)} F(\tau)(y) .
$$
It remains to estimate the omitted terms. By the induction hypothesis, any term that contains the preceding remainder is
\(O(|v-u|^{(r+1)\alpha})\) before substitution.
Multiplication by \(\mathcal{Z}_{u,v}^{(a)}=O(|v-u|^\alpha)\)
yields \(O(|v-u|^{(r+2)\alpha})\). If $\sum_{\ell=1}^n\left|\tau_{\ell}\right| \geq r+1$, then Proposition~\ref{prop:rk-tree-coefficient-scaling} yields
$$
\|\mathcal{Z}_{u, v}^{(a)}\|_{\infty} \prod_{\ell=1}^n|\Phi_{u, v}(\tau_{\ell})|_{\infty} \leq C|v-u|^{(1+\sum|\tau_{\ell}|) \alpha} \leq C|v-u|^{(r+2) \alpha} .
$$
The Taylor remainder is $O(|v-u|^{(p+1) \alpha})$, which is also $O(|v-u|^{(r+2) \alpha})$.  Hence all omitted terms are $O(|v-u|^{(r+2) \alpha})$.
These estimates show that
$$
Y_i^{u, v}(y)=y+\sum_{\tau \in \mathcal{T}_{\le{r+1}}} \frac{\left(\Phi_{u, v}(\tau)\right)_i}{\sigma(\tau)} F(\tau)(y)+O(|v-u|^{(r+2) \alpha}) .
$$
This completes the induction step.

Proceeding inductively from tree order one to tree order $p$, we obtain
\begin{equation}\label{E4-11}
Y_i^{u, v}(y)=y+\sum_{\tau \in \mathcal{T}_{\le p}} \frac{\left(\Phi_{u, v}(\tau)\right)_i}{\sigma(\tau)} F(\tau)(y)+O(|v-u|^{(p+1) \alpha}) .
\end{equation}
All elementary differentials associated with trees of order at most $p+1$ are uniformly bounded because
$
F_a \in C_b^{p+1}\left(\mathbb{R}^e ; \mathbb{R}^e\right), \, a \in\mathcal A.
$
For fixed $p$, finiteness of decorated trees of order $\leq p+1$ ensures uniform constants on all such trees, $i$, and $|y| \leq R$. Thus, in the theorem's remainder notation, \eqref{E4-11} becomes
$$
Y_i^{u, v}(y)=y+\sum_{\tau \in \mathcal{T}_{\le p}} \frac{\left(\Phi_{u, v}(\tau)\right)_i}{\sigma(\tau)} F(\tau)(y)+\mathcal{R}_{i, u, v}^{(p+1)}(y), 
$$
where
$$
\max _{1 \leq i \leq q}|\mathcal{R}_{i, u, v}^{(p+1)}(y)| \leq C_{p, R, T}|v-u|^{(p+1) \alpha} .
$$
This proves \eqref{eq:stage-bseries-expansion}.

We next consider the one-step map. By
\eqref{eq:controlled-rk-one-step}, we have  
$$
\Psi_{u,v}^{\mathrm{CRK}}(y)-y
=
\sum_{a=1}^{m}\sum_{i=1}^{q}
\zeta_{u,v,i}^{(a)}
F_a(Y_i^{u,v}(y)).
$$
Since
$
|Y_i^{u,v}(y)-y|
\leq C|v-u|^\alpha,
$
it follows that 
$$
F_a(Y_i^{u,v}(y))
=
\sum_{n=0}^{p-1}\frac{1}{n!}
D^nF_a(y)
(
Y_i^{u,v}(y)-y,\ldots,
Y_i^{u,v}(y)-y
)
+
O(
|Y_i^{u,v}(y)-y|^p
).
$$
Hence, the Taylor remainder is
\(O(|v-u|^{p\alpha})\). Since
Assumption~\ref{ass:rk-solvability-size} gives
$
|\zeta_{u,v}^{(a)}|_\infty
\leq
C_{\mathrm{CRK}}|v-u|^\alpha,
$
its contribution to the one-step map is
\(O(|v-u|^{(p+1)\alpha})\).

We now substitute the stage expansion
\eqref{eq:stage-bseries-expansion} into the Taylor
polynomial. Consider a tree
$
\tau=[\tau_1\cdots\tau_n]_a.
$
Choosing from the \(n\) copies of
\(Y_i^{u,v}(y)-y\) the terms associated with
\(\tau_1,\ldots,\tau_n\) produces the elementary
differential
$$
D^nF_a(y)
\left(
F(\tau_1)(y),\ldots,F(\tau_n)(y)
\right)
=
F([\tau_1\cdots\tau_n]_a)(y)
=
F(\tau)(y).
$$
The corresponding numerical coefficient is
$$
\sum_{i=1}^{q}
(\zeta^{(a)}_{u,v})_{i}
\prod_{\ell=1}^{n}
\left(\Phi_{u,v}(\tau_\ell)\right)_i.
$$
By the definition of the componentwise product and
Definition~\ref{def:rk-tree-coefficients}, we have
$$
\sum_{i=1}^{q}
(\zeta^{(a)}_{u,v})_{i}
\prod_{\ell=1}^{n}
\left(\Phi_{u,v}(\tau_\ell)\right)_i
 =
\left\langle
\zeta_{u,v}^{(a)},
\Phi_{u,v}(\tau_1)\odot\cdots\odot
\Phi_{u,v}(\tau_n)
\right\rangle_{\mathbb R^q}
=
a_{u,v}(\tau).
$$
The same calculation of the symmetry-factor as in the stage
expansion shows that the corresponding combinatorial
factor is \(1/\sigma(\tau)\). Therefore, the complete
contribution associated with \(\tau\) is
$
\frac{a_{u,v}(\tau)}{\sigma(\tau)}F(\tau)(y).
$
It remains to estimate the omitted terms. Every term containing the stage remainder $\mathcal{R}_{i, u, v}^{(p+1)}(y)$ is multiplied by $\zeta_{u, v}^{(a)}=O(|v-u|^\alpha)$, and therefore has order at least $O(|v-u|^{(p+2) \alpha})$. 

Moreover, for any unretained tree term, we have
$
1+\sum_{\ell=1}^n\left|\tau_{\ell}\right| \geq p+1.
$
By Assumption~\ref{ass:rk-solvability-size} and
Proposition~\ref{prop:rk-tree-coefficient-scaling}, we obtain
$$
|
\sum_{i=1}^{q}
(\zeta^{(a)}_{u,v})_{i}
\prod_{\ell=1}^{n}
\left(\Phi_{u,v}(\tau_\ell)\right)_i
|
\leq
q|\zeta_{u,v}^{(a)}|_\infty
\prod_{\ell=1}^{n}
|\Phi_{u,v}(\tau_\ell)|_\infty
 \leq
C|v-u|^{
(1+\sum_{\ell=1}^{n}|\tau_\ell|)\alpha
}
\leq
C_{p,T}|v-u|^{(p+1)\alpha}.
$$
The uniform boundedness of all derivatives and elementary differentials
appearing in the finite expansion up to order \(p\), which follows from
\(F_a\in C_b^{p+1}\), combined with the Taylor-remainder estimate,
gives  
\[
\Psi_{u,v}^{\mathrm{CRK}}(y)
=
y+
\sum_{\tau\in\mathcal T_{\le p}}
\frac{a_{u,v}(\tau)}{\sigma(\tau)}
F(\tau)(y)
+
\mathcal R_{u,v}^{(p+1)}(y),
\]
where
$$
|
\mathcal R_{u,v}^{(p+1)}(y)
|
\leq
C_{p,R,T}|v-u|^{(p+1)\alpha}.
$$
Combining the estimate for $\mathcal{R}_{u, v}^{(p+1)}(y)$ with \eqref{eq:stage-bseries-expansion} gives
\[
\max_{1\leq i\leq q}
|
\mathcal R_{i,u,v}^{(p+1)}(y)
|
+
|
\mathcal R_{u,v}^{(p+1)}(y)
|
\leq
C_{p,R,T}|v-u|^{(p+1)\alpha}.
\]
This proves \eqref{eq:rk-bseries-expansion} and
\eqref{eq:rk-bseries-remainder}.
\end{proof}

We now compare the exact controlled B-series expansion of the solution to the controlled-driven rough differential equation \meqref{eq:controlled-driven-rde} with the controlled B-series expansion of its controlled Runge-Kutta one-step approximation, as given respectively in Theorem~\ref{thm:exact-controlled-bseries} and Theorem~\ref{thm:rk-bseries}.
This comparison yields the following tree matching condition.
Recall that the controlled Runge-Kutta method is given in
Definition~\ref{def:controlled-rk}.

\begin{definition}
\label{def:tree-order}
Let $p\in\mathbb{Z}_{\geq 1}$. We say that \emph{the controlled
Runge-Kutta method has tree order \(p\)} if
\begin{equation}
\label{eq:tree-order-condition}
a_{u,v}(\tau)
=
 Z^\tau_{u,v},  
\qquad
\tau\in\mathcal T_{\leq p},
\qquad
0\le u\le v\le T.
\end{equation}
\end{definition}

\begin{theorem} (Local truncation error) \label{thm:local-truncation-error-rk}
Let  
\((Y,Y')\in \mathcal D_\bX^\alpha([0,T];\mathbb R^e)\) be the exact solution of
\eqref{eq:component-controlled-rde}. Suppose that the controlled Runge-Kutta
method has tree order \(p\).
Then there exists a constant $C_{p, Y, T}>0$ such that, for every sufficiently small interval $[u, v] \subset$ $[0, T]$, the one-step approximation with initial value $Y_u$ satisfies 
\begin{equation}
\label{eq:local-truncation-error-rk}
|
Y_v-\Psi_{u,v}^{\mathrm{CRK}}(Y_u)
|
\le
C_{p,Y,T}|v-u|^{(p+1)\alpha}.
\end{equation}
\end{theorem}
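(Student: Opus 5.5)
The plan is to compare the two finite expansions already available and use the tree matching condition \eqref{eq:tree-order-condition} to cancel the retained tree terms. The exact side comes from Theorem~\ref{thm:exact-controlled-bseries} with $s=u$, $t=v$. It writes
$Y_v=Y_u+\sum_{\tau\in\mathcal T_{\le p}}\sigma(\tau)^{-1}F(\tau)(Y_u)Z^\tau_{u,v}+R^{(p+1)}_{u,v}$ with $|R^{(p+1)}_{u,v}|\le C_{p,T}|v-u|^{(p+1)\alpha}$. The numerical side comes from Theorem~\ref{thm:rk-bseries} applied with initial state $y=Y_u$, which gives $\Psi^{\mathrm{CRK}}_{u,v}(Y_u)=Y_u+\sum_{\tau\in\mathcal T_{\le p}}\sigma(\tau)^{-1}a_{u,v}(\tau)F(\tau)(Y_u)+\mathcal R^{(p+1)}_{u,v}(Y_u)$.

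To invoke Theorem~\ref{thm:rk-bseries} uniformly, I first fix the radius $R:=\sup_{t\in[0,T]}|Y_t|$. This is finite because $Y$ is continuous on the compact interval $[0,T]$; in fact $|Y_t|\le|y_0|+\|Y\|_{\alpha;[0,T]}T^\alpha$. Every initial value $Y_u$ then satisfies $|Y_u|\le R$, so the remainder bound \eqref{eq:rk-bseries-remainder} holds with a constant $C_{p,R,T}$ that depends on the solution only through $R$. This is the source of the dependence on $Y$ in $C_{p,Y,T}$.

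The phrase ``sufficiently small interval'' is where Assumption~\ref{ass:rk-solvability-size} enters. That assumption guarantees that the stage system \eqref{eq:controlled-rk-stages} is solvable on the steps under consideration, and gives the coefficient size bound \eqref{eq:rk-coefficient-size}. Both are hypotheses of Theorem~\ref{thm:rk-bseries}.

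I would restrict to intervals where Assumption~\ref{ass:rk-solvability-size} applies, for instance by a contraction argument for the stages. If $C_{\mathrm{CRK}}|v-u|^\alpha\,q\sum_a\|DF_a\|_\infty<1$, the stage map is a contraction, so the stage values are uniquely defined.

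With that in place, subtracting the two expansions gives
\[
Y_v-\Psi^{\mathrm{CRK}}_{u,v}(Y_u)=\sum_{\tau\in\mathcal T_{\le p}}\frac{Z^\tau_{u,v}-a_{u,v}(\tau)}{\sigma(\tau)}F(\tau)(Y_u)+R^{(p+1)}_{u,v}-\mathcal R^{(p+1)}_{u,v}(Y_u).
\]
Tree order $p$ means exactly that every coefficient $Z^\tau_{u,v}-a_{u,v}(\tau)$ in the sum vanishes. The triangle inequality then yields the bound \eqref{eq:local-truncation-error-rk} with $C_{p,Y,T}:=C_{p,T}+C_{p,R,T}$.

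The only genuine obstacle is the uniformity bookkeeping. The constant in Theorem~\ref{thm:rk-bseries} must be independent of $u$ and $v$, which is secured by the bound $|Y_u|\le R$. The exact remainder must be uniform over $0\le u\le v\le T$, which Theorem~\ref{thm:exact-controlled-bseries} already provides. I would also record that the elementary differentials $F(\tau)$ for $|\tau|\le p$ are bounded, since $F_a\in C_b^{p+1}$. Strictly this is not needed, because those terms cancel exactly, but it confirms that no hidden growth enters the estimate.
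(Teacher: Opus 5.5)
Your proposal is correct and follows essentially the same route as the paper: bound $|Y_u|$ uniformly (the paper takes $R_Y:=1+\max_{r}|Y_r|$, which also guarantees the $R>0$ that Theorem~\ref{thm:rk-bseries} requires), expand both $Y_v$ and $\Psi^{\mathrm{CRK}}_{u,v}(Y_u)$ at $Y_u$, cancel the tree terms via \eqref{eq:tree-order-condition}, and take $C_{p,Y,T}=C_{p,T}+C_{p,R,T}$. Your contraction argument for stage solvability on short steps is a harmless addition, whereas the paper simply takes solvability from Assumption~\ref{ass:rk-solvability-size} and never uses the smallness of $[u,v]$.
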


\begin{proof}
Since
$
(Y,Y')\in\mathcal D_\bX^\alpha([0,T];\mathbb R^e),
$
the path \(Y\) is continuous on \([0,T]\). Define
$$
R_Y:=1+\max_{0\leq r\leq T}|Y_r|.
$$
Then \(R_Y>0\) and \(|Y_u|\leq R_Y\) for every
\(u\in[0,T]\).
By \eqref{eq:exact-controlled-bseries-expansion}, applied with
\(s=u\) and \(t=v\), we have
\begin{equation}
\label{eq:exact-expansion-local-error-proof}
Y_v
=
Y_u
+
\sum_{\tau\in\mathcal T_{\le p}}
\frac{Z_{u,v}^{\tau}}{\sigma(\tau)}
F(\tau)(Y_u)
+
R_{u,v}^{(p+1)},
\end{equation}
where
$$
|R_{u,v}^{(p+1)}|
\leq
C_{p,T}|v-u|^{(p+1)\alpha}.
$$
By Theorem~\ref{thm:rk-bseries}, applied with
\(y=Y_u\) and \(R=R_Y\), we also have
\begin{equation}
\label{eq:rk-expansion-local-error-proof}
\Psi_{u,v}^{\mathrm{CRK}}(Y_u)
=
Y_u
+
\sum_{\tau\in\mathcal T_{\le p}}
\frac{a_{u,v}(\tau)}{\sigma(\tau)}
F(\tau)(Y_u)
+
\mathcal R_{u,v}^{(p+1)}(Y_u),
\end{equation}
where
$$
|
\mathcal R_{u,v}^{(p+1)}(Y_u)
|
\leq
C_{p,R_Y,T}|v-u|^{(p+1)\alpha}.
$$
Subtracting
\eqref{eq:rk-expansion-local-error-proof}
from
\eqref{eq:exact-expansion-local-error-proof}
gives
$$
Y_v-\Psi_{u,v}^{\mathrm{CRK}}(Y_u)
=
\sum_{\tau\in\mathcal T_{\le p}}
\frac{
Z_{u,v}^{\tau}-a_{u,v}(\tau)
}{
\sigma(\tau)
}
F(\tau)(Y_u)
+
R_{u,v}^{(p+1)}
-
\mathcal R_{u,v}^{(p+1)}(Y_u).
$$
By the tree order condition in
Definition~\ref{def:tree-order},
$
a_{u,v}(\tau)= Z_{u,v}^{\tau}
$
for all $\tau\in\mathcal T_{\le p}$.
Therefore,
$$
|
Y_v-\Psi_{u,v}^{\mathrm{CRK}}(Y_u)
|
\leq
|R_{u,v}^{(p+1)}|
+
|
\mathcal R_{u,v}^{(p+1)}(Y_u)
|
\leq
(
C_{p,T}+C_{p,R_Y,T}
)
|v-u|^{(p+1)\alpha}.
$$
Taking
$
C_{p,Y,T}:=C_{p,T}+C_{p,R_Y,T}
$, this completes the proof of  \eqref{eq:local-truncation-error-rk}.
\end{proof}
 
The conclusion of Theorem~\ref{thm:local-truncation-error-rk} remains valid if
the exact tree matching condition \eqref{eq:tree-order-condition} is replaced by
the approximate condition
\begin{equation}
|
a_{u,v}(\tau)- Z^\tau_{u,v}
|
\le
C_{p,T}|v-u|^{(p+1)\alpha},
\qquad
\tau\in\mathcal T_{\le p},
\qquad
0\le u\le v\le T.
\label{eq:appr}
\end{equation}
Indeed, after subtracting the two B-series expansions, the additional tree
contribution is then also of order \(|v-u|^{(p+1)\alpha}\), since
\(\mathcal T_{\le p}\) is finite and the elementary differentials are bounded on
bounded sets. The approximate form~\eqref{eq:appr} will be useful when considering the simplified
schemes in the next section. For the construction of Runge-Kutta schemes, we now return to the condition \eqref{eq:tree-order-condition} and derive its explicit form for low-order methods. 

\begin{proposition}
\label{prop:tree-order-conditions-order-three}
For \(p=3\), the exact tree matching condition 
\eqref{eq:tree-order-condition} is equivalent to the following identities, for every
\(0\le u\le v\le T\) and every \(a,b,c\in  \mathcal A\), 
\begin{equation}
\label{eq:order-one-condition}
\sum_{i=1}^q
\zeta_{u,v,i}^{(a)}
=
 Z_{u,v}^{\bullet_a},\qquad
\sum_{i,j=1}^q
(\zeta_{u,v}^{(a)})_{i}
(\mathcal Z_{u,v}^{(b)})_{i,j}
=
 Z_{u,v}^{[\bullet_b]_a},
\end{equation}

\begin{equation}
\label{eq:order-three-chain-condition}
\sum_{i,j,k=1}^q
(\zeta_{u,v}^{(a)})_{i}
(\mathcal Z_{u,v}^{(b)})_{i,j}
(\mathcal Z_{u,v}^{(c)})_{j,k}
=
 Z_{u,v}^{[[\bullet_c]_b]_a},
\qquad
\sum_{i=1}^q
(\zeta_{u,v}^{(a)})_{i}
\left(
\sum_{j=1}^q
(\mathcal Z_{u,v}^{(b)})_{i,j}
\right)
\left(
\sum_{k=1}^q
(\mathcal Z_{u,v}^{(c)})_{i,k}
\right)
=
Z_{u,v}^{[\bullet_b\bullet_c]_a}.
\end{equation}
\end{proposition}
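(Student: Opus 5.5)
The proof is a direct unfolding of Definition~\ref{def:rk-tree-coefficients} for every decorated tree of order at most three. Condition \eqref{eq:tree-order-condition} with $p=3$ is the finite family of identities $a_{u,v}(\tau)=Z^\tau_{u,v}$ for $\tau\in\mathcal T_{\le 3}$, so the whole task is to compute $a_{u,v}(\tau)$ for each such $\tau$.

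First, I would enumerate the decorated rooted trees of order at most three. Every such tree has the form $[\eta]_a$ with $|\eta|\le 2$. The possibilities for $\eta$ are the empty forest $\etree$, a single node $\bullet_b$, the single tree $[\bullet_c]_b$, and the forest $\bullet_b\bullet_c$. This gives exactly four shapes:
\[
\bullet_a,\qquad [\bullet_b]_a,\qquad [[\bullet_c]_b]_a,\qquad [\bullet_b\bullet_c]_a,
\]
with $a,b,c\in\mathcal A$ arbitrary.

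For $[\bullet_b\bullet_c]_a$ the forest $\bullet_b\bullet_c=\bullet_c\bullet_b$ is commutative. The corresponding identity is nevertheless stated for all ordered pairs $(b,c)$, and this is harmless because both sides are symmetric in $(b,c)$:
\begin{itemize}
\item the left side, since $\odot$ is commutative;
\item the right side, since $Z^{\bullet_b\bullet_c}_{u,s}=Z^{\bullet_b}_{u,s}Z^{\bullet_c}_{u,s}$ by Definition~\ref{def:controlled-forest-coefficients}(ii).
\end{itemize}
So quantifying over all $a,b,c$ gives exactly the set of tree conditions, with some repeated.

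Second, I would compute $a_{u,v}$ on each shape from \eqref{eq:stage-tree-coefficient}, starting from $\Phi_{u,v}(\etree)=\mathbf 1_q$:
\begin{itemize}
\item $a_{u,v}(\bullet_a)=\langle\zeta^{(a)}_{u,v},\mathbf 1_q\rangle=\sum_i\zeta^{(a)}_{u,v,i}$.
\item $\Phi_{u,v}(\bullet_b)=\mathcal Z^{(b)}_{u,v}\mathbf 1_q$, whose $i$-th entry is $\sum_j(\mathcal Z^{(b)}_{u,v})_{i,j}$. Hence $a_{u,v}([\bullet_b]_a)=\sum_{i,j}(\zeta^{(a)}_{u,v})_i(\mathcal Z^{(b)}_{u,v})_{i,j}$.
\item $\Phi_{u,v}([\bullet_c]_b)=\mathcal Z^{(b)}_{u,v}\mathcal Z^{(c)}_{u,v}\mathbf 1_q$. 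Taking the inner product with $\zeta^{(a)}_{u,v}$ gives the triple sum in \eqref{eq:order-three-chain-condition}.
\item $a_{u,v}([\bullet_b\bullet_c]_a)=\langle\zeta^{(a)}_{u,v},\Phi_{u,v}(\bullet_b)\odot\Phi_{u,v}(\bullet_c)\rangle$. Writing out the componentwise product gives $\sum_i(\zeta^{(a)}_{u,v})_i\bigl(\sum_j(\mathcal Z^{(b)}_{u,v})_{i,j}\bigr)\bigl(\sum_k(\mathcal Z^{(c)}_{u,v})_{i,k}\bigr)$.
\end{itemize}

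Third, I would equate each of these expressions with $Z^\tau_{u,v}$ for the corresponding $\tau$. Both directions of the equivalence are then immediate: the four identities, taken over all $a,b,c$, are literally the conditions $a_{u,v}(\tau)=Z^\tau_{u,v}$ for all $\tau\in\mathcal T_{\le3}$. One could also rewrite the right-hand sides via Definition~\ref{def:controlled-forest-coefficients}, for example $Z^{\bullet_a}_{u,v}=Z^a_{u,v}$, but this is not needed for the equivalence.

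There is no real analytic obstacle; the argument is purely combinatorial. The only points needing care are the completeness of the tree enumeration, including the symmetric bushy tree $[\bullet_b\bullet_c]_a$, and keeping the index placement consistent with the left-multiplication convention $\Phi(\tau)=\mathcal Z^{(a)}(\cdots)$. That convention is what places $\mathcal Z^{(b)}$ before $\mathcal Z^{(c)}$ in the chain condition.
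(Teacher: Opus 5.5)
Your proposal is correct and takes essentially the same route as the paper. Both compute $\Phi_{u,v}$ and $a_{u,v}$ on the four shapes $\bullet_a$, $[\bullet_b]_a$, $[[\bullet_c]_b]_a$, $[\bullet_b\bullet_c]_a$ directly from Definition~\ref{def:rk-tree-coefficients}, expand the inner products, and use that these shapes exhaust $\mathcal T_{\le 3}$; your remark on the $(b,c)$-symmetry of the bushy-tree condition is a harmless well-definedness check that the paper leaves implicit.
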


\begin{proof}
By Definition~\ref{def:rk-tree-coefficients}, we have
\[
\Phi_{u,v}(\mathbf 1)=\mathbf 1_q,
\qquad
\Phi_{u,v}(\bullet_b)
=
\mathcal Z_{u,v}^{(b)}\mathbf 1_q,
\qquad
\Phi_{u,v}([\bullet_c]_b)
=
\mathcal Z_{u,v}^{(b)}
\mathcal Z_{u,v}^{(c)}
\mathbf 1_q.
\]
Consequently, the corresponding coefficients are
\[ a_{u,v}(\bullet_a) = \left\langle \zeta_{u,v}^{(a)},\mathbf 1_q \right\rangle_{\mathbb R^q},\quad a_{u,v}([\bullet_b]_a) = \left\langle \zeta_{u,v}^{(a)}, \mathcal Z_{u,v}^{(b)}\mathbf 1_q \right\rangle_{\mathbb R^q},\quad a_{u,v}([[\bullet_c]_b]_a) = \left\langle \zeta_{u,v}^{(a)}, \mathcal Z_{u,v}^{(b)} \mathcal Z_{u,v}^{(c)} \mathbf 1_q \right\rangle_{\mathbb R^q} \] and \[ a_{u,v}([\bullet_b\bullet_c]_a) = \left\langle \zeta_{u,v}^{(a)}, \bigl(\mathcal Z_{u,v}^{(b)}\mathbf 1_q\bigr) \odot \bigl(\mathcal Z_{u,v}^{(c)}\mathbf 1_q\bigr) \right\rangle_{\mathbb R^q}. \] 
Expanding the inner products and matrix products yields
\eqref{eq:order-one-condition}--\eqref{eq:order-three-chain-condition}, respectively.
Since every nonempty decorated rooted tree of order at most three is of one of the forms
\[
\bullet_a,\qquad
[\bullet_b]_a,\qquad
[[\bullet_c]_b]_a,\qquad
[\bullet_b\bullet_c]_a,
\]
where \(a,b,c\in\mathcal A\), the relations in \eqref{eq:order-one-condition}--\eqref{eq:order-three-chain-condition} are
equivalent to
$a_{u, v}(\tau)=Z_{u, v}^\tau$ for all  $\tau \in \mathcal{T}_{\leq 3}$ with $|\tau| \geq 1 $.
\end{proof}
\begin{remark}
\label{rem:relation-classical-branched}
Let \(\mathbf X^{\mathrm{br}}\) be a lifted branched rough path  of \(X\) and
\(
\mathbf Z = (Z,Z')=(X,\operatorname{Id}_{\mathbb R^d}).
\)
Then the coefficients \(Z^\tau_{u,v}\) given in Definition~\ref{def:controlled-forest-coefficients} coincide with the
corresponding branched rough path coordinates:
\(
 Z^\tau_{u,v}
=
\langle \mathbf X^{\mathrm{br}}_{u,v},\tau\rangle\) for
\(\tau\in\mathcal T_{\leq p}.
\)
Thus, the tree matching
condition
\(
a_{u,v}(\tau)=Z^\tau_{u,v}\) for
\(\tau\in\mathcal T_{\leq p},
\)
reduces to the classical Runge-Kutta order condition for rough differential
equations driven by branched rough paths.
\end{remark}

\section{Simplified controlled Runge-Kutta methods}
\label{sec:simplified-rk}
In this section, we introduce a simplified controlled Runge-Kutta method based on a piecewise linear approximation of the effective driver $Z$. The controlled Runge-Kutta method developed in Section~\ref{sec:runge-kutta-methods} involves coefficients $Z^\tau_{u,v}$ defined recursively through controlled rough integration. The piecewise linear approximation replaces these coefficients by explicit increment-based expressions, leading to a more directly implementable scheme in the spirit of simplified methods for rough differential equations \cite{DNT2012,FV2010}. 
 
Let
$
\pi_h:=\{0=t_0<t_1<\cdots<t_N=T\}
$
be a partition of \([0,T]\). We write
$
h_n:=t_{n+1}-t_n
$ and $
h:=\max_{0\le n\le N-1}h_n .
$
Define the piecewise (smooth) linear interpolation $Z^h$ of \(Z\) by 
\begin{equation}\label{eq:piecewise-linear-Z}
Z_t^h:=Z_{t_n}+\frac{t-t_n}{h_n}Z_{t_n,t_{n+1}},\qquad t\in (t_n,t_{n+1}], \qquad 0\leq n\leq N-1,
\end{equation}
with $Z_0^h:=Z_0.$ Such piecewise linear and discrete approximations are standard tools in the
numerical analysis of rough differential equations; see, for example,
\cite{Davie2008}. Since \(Z^h\) is linear on each interval
\([t_n,t_{n+1}]\), it follows that
\[
dZ_t^h=\frac{Z_{t_n,t_{n+1}}}{h_n}\,dt,
\qquad t\in(t_n,t_{n+1}).  
\]
Let
\(Y^h\) be the solution driven by \(Z^h\), satisfying
\begin{equation}
\label{eq:piecewise-linear-rde}
dY_t^h=F(Y_t^h)\,dZ_t^h,
\qquad
Y_0^h=y_0.
\end{equation}
On each interval \((t_n,t_{n+1})\), the path \(Z^h\) is differentiable, and
\eqref{eq:piecewise-linear-rde} is equivalent to the ordinary differential equation
\begin{equation}
\label{E5-3}
\frac{dY_t^h}{dt}
=
F(Y_t^h)
\frac{Z_{t_n,t_{n+1}}}{h_n}
=
\sum_{a=1}^m
F_a(Y_t^h)
\frac{Z^a_{t_n,t_{n+1}}}{h_n},
\qquad
t\in(t_n,t_{n+1}),
\end{equation}
where \(Z^a_{t_n,t_{n+1}}\) denotes the \(a\)-th component of the increment
\(Z_{t_n,t_{n+1}}\). 
We use the notation
\[
\Delta Z_n:=Z_{t_n,t_{n+1}},
\qquad
\Delta Z_{u,v}:=Z_v-Z_u,
\qquad
\Delta Z_n^a:=Z^a_{t_n,t_{n+1}},\qquad 
a\in\mathcal A.
\]

For each fixed partition $\pi_h = \{0=t_0<t_1<\cdots<t_N=T\}$, regard the piecewise linear
interpolation $Z^h$ as the $\mathbf X$-controlled path
$\mathbf Z^h:=(Z^h,0)$, equipped with the zero Gubinelli derivative.
On each mesh interval $[t_n,t_{n+1}]$, define the coefficients
$Z_{t_n,t_{n+1}}^{h,\tau}$ by the recursion in
Definition~\ref{def:controlled-forest-coefficients}, applied to the
restriction of $\mathbf Z^h$ to that interval. In this case, the
controlled rough integrals coincide with the ordinary
Riemann-Stieltjes integrals against $Z^h$.

\begin{proposition}
\label{prop:tree-coefficients-piecewise-linear} 
For every $\tau\in\mathcal T$ and every mesh interval
$[t_n,t_{n+1}]$ of $\pi_h$, we have
\begin{equation}
\label{eq:piecewise-linear-tree-coefficient}
Z_{t_n,t_{n+1}}^{h,\tau}
=
\frac{1}{\gamma(\tau)}
\prod_{v\in V(\tau)}
\Delta Z_n^{\ell(v)},
\end{equation}
where \(V(\tau)\) is the set of vertices of 
$\tau$, \(\ell(v)\in \mathcal{A}\) denotes the label of \(v\) and \(\gamma(\tau)\) is the tree factorial from Definition~\ref{def:symmetry-factor}.
\end{proposition}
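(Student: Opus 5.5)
The plan is to prove a slightly stronger, time-dependent identity by induction on $|\tau|$, and then evaluate it at $t=t_{n+1}$. Fix the mesh interval $[t_n,t_{n+1}]$ and set $s=t_n$. Since $\mathbf Z^h=(Z^h,0)$ has zero Gubinelli derivative, the second-order term $(H'_u\diamond (Z^h)'_u)\mathbb X_{u,v}$ in \eqref{eq:controlled-integral-definition} vanishes. The controlled rough integral is therefore the limit of Riemann--Stieltjes sums against the linear path $Z^h$. On $(t_n,t_{n+1})$ this gives $dZ^{h,a}_r=\frac{\Delta Z_n^a}{h_n}\,dr$, so for a continuous integrand
\[
\sint_{t_n}^{t} H_r\,d\bZ^{h,a}_r=\frac{\Delta Z^a_n}{h_n}\int_{t_n}^t H_r\,dr .
\]
The claim to establish is that, for every $\tau\in\mathcal T$ and every $t\in[t_n,t_{n+1}]$,
\[
Z^{h,\tau}_{t_n,t}=\frac{1}{\gamma(\tau)}\Bigl(\frac{t-t_n}{h_n}\Bigr)^{|\tau|}\prod_{v\in V(\tau)}\Delta Z_n^{\ell(v)} .
\]
Setting $t=t_{n+1}$ then yields \eqref{eq:piecewise-linear-tree-coefficient}.

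\textbf{Base case.} For $\tau=\bullet_a$ we have $Z^{h,\bullet_a}_{t_n,t}=Z^{h,a}_{t_n,t}=\frac{t-t_n}{h_n}\Delta Z^a_n$. This agrees with the claim because $\gamma(\bullet_a)=1$.

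\textbf{Forest step.} For a forest $\eta=\tau_1\cdots\tau_k$, Definition~\ref{def:controlled-forest-coefficients}(ii) and the induction hypothesis give
\[
Z^{h,\eta}_{t_n,t}=\Bigl(\tfrac{t-t_n}{h_n}\Bigr)^{|\eta|}\prod_{i=1}^k\frac{1}{\gamma(\tau_i)}\prod_{v\in V(\eta)}\Delta Z_n^{\ell(v)} .
\]
Here the exponents add because $|\eta|=\sum_i|\tau_i|$ by \eqref{C3-1}, and the vertex products multiply over the disjoint union $V(\eta)=\bigsqcup_i V(\tau_i)$. The empty forest also fits this formula, since both sides equal $1$.

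\textbf{Tree step.} For $\tau=[\eta]_a$, use \eqref{eq:controlled-coefficient-tree} together with the integral identity above:
\[
Z^{h,\tau}_{t_n,t}=\frac{\Delta Z^a_n}{h_n}\,C_\eta\int_{t_n}^t\Bigl(\frac{r-t_n}{h_n}\Bigr)^{|\eta|}dr
=\frac{C_\eta\,\Delta Z_n^a}{|\eta|+1}\Bigl(\frac{t-t_n}{h_n}\Bigr)^{|\eta|+1},
\]
where $C_\eta$ is the constant obtained in the forest step. Since $|\tau|=|\eta|+1$, Definition~\ref{def:symmetry-factor} gives $\gamma(\tau)=|\tau|\prod_i\gamma(\tau_i)$. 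The new root contributes the additional factor $\Delta Z^a_n$ with $\ell(\text{root})=a$. Together these give exactly the claimed formula for $\tau$, which closes the induction.

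\textbf{Main obstacle.} The only delicate point is justifying that the controlled rough integral against $(Z^h,0)$ reduces to the classical integral $\frac{\Delta Z^a_n}{h_n}\int H_r\,dr$. The integrands $Z^{h,\eta}_{t_n,\bigcdot}$ are polynomials in $t$ on the segment. Their Gubinelli derivatives with respect to $X$ are zero, because the derivative of the integral is $H_r(Z^h)'_r=0$ by Lemma~\ref{prop:integral-controlled}. Hence the compensated sums in \eqref{eq:controlled-integral-definition} reduce to plain Riemann--Stieltjes sums $\sum H_uZ^{h,a}_{u,v}$. Because $Z^h$ is $C^1$ on the closed segment and the integrand is continuous, these sums converge to the Riemann integral. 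Once this reduction is in place, the remainder of the argument is the classical computation of the B-series coefficients of the exact flow, $1/\gamma(\tau)$, with each node weighted by the increment of its label.
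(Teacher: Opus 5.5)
Your proposal is correct and follows the paper's proof essentially step for step. Both arguments prove the strengthened identity $Z^{h,\tau}_{t_n,t}=\gamma(\tau)^{-1}\bigl(\frac{t-t_n}{h_n}\bigr)^{|\tau|}\prod_{v\in V(\tau)}\Delta Z_n^{\ell(v)}$ by induction on $|\tau|$ through $\tau=[\eta]_a$, using the product over branches, the elementary integral of $(\frac{r-t_n}{h_n})^{|\eta|}$, and the recursion $\gamma(\tau)=|\tau|\prod_j\gamma(\tau_j)$, and then set $t=t_{n+1}$. Your explicit check that the term $(H'_u\diamond (Z^h)'_u)\mathbb X_{u,v}$ vanishes because $(Z^h)'=0$, so that the compensated sums become Riemann--Stieltjes sums, spells out what the paper only asserts in the sentence before the proposition.
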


\begin{proof}
Fix a grid interval
\([u,v]=[t_n,t_{n+1}]\) and write \(h_n=v-u = t_{n+1} -t_n\).
We prove by induction on \(|\tau|\geq 1\) the stronger identity
\[
 Z_{u,r}^{h,\tau}
=
\frac{1}{\gamma(\tau)}
\left(\frac{r-u}{h_n}\right)^{|\tau|}
\prod_{w\in V(\tau)}
\Delta Z_n^{\ell(w)},
\qquad r\in[u,v].
\]
Taking \(r=v\)  gives
\eqref{eq:piecewise-linear-tree-coefficient}.
For the one-node tree \(\tau=\bullet_a\), by Definition~\ref{def:controlled-forest-coefficients} and \eqref{eq:piecewise-linear-Z}, we have 
$$
 Z_{u,r}^{h,\bullet_a}
=
\sint_u^r d\bZ_s^{h,a}
=
Z_{u,r}^{h,a}
=
\frac{r-u}{h_n}\Delta Z_n^a.
$$
Since \(\gamma(\bullet_a)=1\), the identity  holds for \(|\tau|=1\).

Suppose that it holds for all trees of order strictly smaller
than \(|\tau|\), and write 
$
\tau=[\eta]_a$ with
$
\eta =\tau_1\cdots\tau_k 
$ and $\tau_1, \ldots,\tau_k\in \mathcal{T}$.
By the induction hypothesis, for \(s\in[u,v]\),
\[
 Z_{u,s}^{h,\tau_j}
=
\frac{1}{\gamma(\tau_j)}
\left(\frac{s-u}{h_n}\right)^{|\tau_j|}
\prod_{w\in V(\tau_j)}
\Delta Z_n^{\ell(w)}.
\]
Therefore,
\[
Z_{u,s}^{h,\eta}
=
\left(
\prod_{j=1}^k\frac{1}{\gamma(\tau_j)}
\right)
\left(\frac{s-u}{h_n}\right)^{|\eta|}
\prod_{w\in V(\eta)}
\Delta Z_n^{\ell(w)}.
\]
Since
$
dZ_s^{h,a}
=
\frac{\Delta Z_n^a}{h_n}\,ds,
$
Definition~\ref{def:controlled-forest-coefficients} gives
\[
\begin{aligned}
 Z_{u,r}^{h,\tau}
&=
\sint_u^r
 Z_{u,s}^{h,\eta}\,d\bZ_s^{h,a}
\\
&=
\left(
\prod_{j=1}^k\frac{1}{\gamma(\tau_j)}
\right)
\prod_{w\in V(\eta)}
\Delta Z_n^{\ell(w)}
\Delta Z_n^a
\sint_u^r
\left(\frac{s-u}{h_n}\right)^{|\eta|}
\frac{ds}{h_n}
\\
&=
\frac{1}{|\eta|+1}
\left(
\prod_{j=1}^k\frac{1}{\gamma(\tau_j)}
\right)
\left(\frac{r-u}{h_n}\right)^{|\eta|+1}
\prod_{w\in V(\tau)}
\Delta Z_n^{\ell(w)}.
\end{aligned}
\]
Since
$
|\tau|=|\eta|+1
$ and $
\gamma(\tau)
=
|\tau|
\prod_{j=1}^k\gamma(\tau_j),
$
it follows that
\[
 Z_{u,r}^{h,\tau}
=
\frac{1}{\gamma(\tau)}
\left(\frac{r-u}{h_n}\right)^{|\tau|}
\prod_{w\in V(\tau)}
\Delta Z_n^{\ell(w)}.
\]
This completes the induction. Taking \(r=v\) and using
\((v-u)/h_n=1\) yields
\[
 Z_{t_n,t_{n+1}}^{h,\tau}
=
\frac{1}{\gamma(\tau)}
\prod_{w\in V(\tau)}
\Delta Z_n^{\ell(w)},
\]
which proves \eqref{eq:piecewise-linear-tree-coefficient}.
\end{proof}
After replacing \(Z\) by its piecewise linear interpolation, the coefficients on each grid interval are explicit functions of the first-level increments \(\Delta Z_n\).

\begin{corollary}
\label{cor:exact-bseries-piecewise-linear}
Let \(Y^h\) be the solution of \eqref{eq:piecewise-linear-rde}. Assume that $(p+1)\alpha>1$ and $F_a\in C_b^{p+1}(\mathbb R^e;\mathbb R^e)$
for $a\in\mathcal A$. Then, on each
grid interval \([t_n,t_{n+1}]\), we have 
\begin{equation}
\label{eq:exact-piecewise-linear-bseries}
Y_{t_{n+1}}^h
=
Y_{t_n}^h
+
\sum_{\tau\in\mathcal T_{\le p}}
\frac{1}{\sigma(\tau)\gamma(\tau)}
F(\tau)(Y_{t_n}^h)
\prod_{v\in V(\tau)}
\Delta Z_n^{\ell(v)}
+
R_n^{(p+1),h},
\end{equation}
where
$
|R_n^{(p+1),h}|
\le
C_{p,T}h_n^{(p+1)\alpha}
$ and  \(F(\tau)\) is the elementary differential associated with the
vector fields \(F_a\) defined in Definition~\ref{def:elementary-differentials}.
\end{corollary}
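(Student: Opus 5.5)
The plan is to apply Theorem~\ref{thm:exact-controlled-bseries} on the single mesh interval $[t_n,t_{n+1}]$, with the controlled driver $\mathbf Z$ replaced by $\mathbf Z^h=(Z^h,0)$. Then we substitute the explicit coefficients from Proposition~\ref{prop:tree-coefficients-piecewise-linear}.

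First, restricted to $[t_n,t_{n+1}]$, the path $Z^h$ is linear. Hence $(Z^h,0)$ is an $\mathbf X$-controlled path on that interval, with remainder $R^{Z^h}_{u,v}=Z^h_{u,v}$. On this interval $Y^h$ solves the ODE \eqref{E5-3}. This ODE solution coincides with the CRDE solution driven by $\mathbf Z^h$, because the controlled integral against a path with zero Gubinelli derivative reduces to the Riemann--Stieltjes integral. Its Gubinelli derivative is $F(Y^h)\cdot 0=0$.

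Applying Theorem~\ref{thm:exact-controlled-bseries} with $s=t_n$ and $t=t_{n+1}$ gives
\[
Y^h_{t_{n+1}}=Y^h_{t_n}+\sum_{\tau\in\mathcal T_{\le p}}\frac{1}{\sigma(\tau)}F(\tau)(Y^h_{t_n})Z^{h,\tau}_{t_n,t_{n+1}}+R^{(p+1),h}_n .
\]
Inserting \eqref{eq:piecewise-linear-tree-coefficient} then yields \eqref{eq:exact-piecewise-linear-bseries}.

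The main obstacle is making the constant $C_{p,T}$ independent of $n$ and of the partition. The constant in Theorem~\ref{thm:exact-controlled-bseries} depends on the controlled norm of the driver, through Lemma~\ref{thm:controlledintegral} and Proposition~\ref{prop:well-posedness-controlled-coefficients}. So we need a bound on $\|(Z^h,0)\|_{X,\alpha;[t_n,t_{n+1}]}$ that is uniform in $h$.

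The derivative part of this norm vanishes. The remainder part satisfies
\[
|Z^h_{u,v}|=\frac{v-u}{h_n}|\Delta Z_n|\le \|Z\|_\alpha h_n^{\alpha-1}(v-u)\le \|Z\|_\alpha (v-u)^{\alpha}.
\]
This only gives $\alpha$-H\"older control, not $2\alpha$, so the $2\alpha$ remainder norm on the segment may blow up like $h_n^{-\alpha}$. I would therefore avoid the rough estimates altogether and argue directly.

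The direct argument expands the smooth ODE solution by classical Taylor/B-series. With driver speed $\Delta Z_n/h_n$, the standard B-series of the flow of the autonomous vector field $\sum_a F_a\,\Delta Z_n^a$ over unit time has elementary-differential coefficients $\frac{1}{\sigma(\tau)\gamma(\tau)}\prod_{v\in V(\tau)}\Delta Z_n^{\ell(v)}$, which matches Proposition~\ref{prop:tree-coefficients-piecewise-linear}. The truncation remainder after order $p$ is bounded by $C\|D^{p}F\|_\infty\,|\Delta Z_n|^{p+1}$, since each additional tree order contributes one factor $\Delta Z_n$. This bound uses $F_a\in C_b^{p+1}$ and the standard Lagrange remainder for the flow expansion in powers of the rescaled vector field.

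Finally, $|\Delta Z_n|\le\|Z\|_{\alpha;[0,T]}h_n^\alpha$. This gives $|R_n^{(p+1),h}|\le C_{p,T}h_n^{(p+1)\alpha}$, with a constant depending only on $\|Z\|_\alpha$ and the $C_b^{p+1}$ norms of the $F_a$, hence uniform in $n$ and $h$. I would present this direct ODE argument as the proof, remarking that it is the $Z^h$-instance of Theorem~\ref{thm:exact-controlled-bseries}.
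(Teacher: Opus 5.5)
Your proposal is correct and takes the same route as the paper's proof: reduce to the ODE \eqref{E5-3} on the mesh interval, rescale time by $\theta=(t-t_n)/h_n$ so that the truncation remainder is bounded by $C_{p,T}|\Delta Z_n|^{p+1}\le C_{p,T}h_n^{(p+1)\alpha}$, and insert the explicit coefficients from Proposition~\ref{prop:tree-coefficients-piecewise-linear}. Your observation that $\|R^{Z^h}\|_{2\alpha;[t_n,t_{n+1}]}$ can grow like $h_n^{-\alpha}$ correctly explains why quoting Theorem~\ref{thm:exact-controlled-bseries} with $\mathbf Z^h$ directly would not give an $h$-uniform constant, and the unit-time rescaling avoids exactly this problem.
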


\begin{proof}
Fix a grid interval \([t_n,t_{n+1}]\) and write
$
h_n:=t_{n+1}-t_n.
$
Since \(Z^h\) is linear on \([t_n,t_{n+1}]\), it follows that
$
dZ_t^{h,a}
=
\frac{\Delta Z_n^a}{h_n}\,dt.
$
Hence, \eqref{eq:piecewise-linear-rde} reduces on
\([t_n,t_{n+1}]\) to the  ordinary differential equation
\begin{equation}\label{eq:local-piecewise-linear-ode}
\frac{d}{dt}Y_t^h
=
\sum_{a=1}^m
F_a(Y_t^h)\frac{\Delta Z_n^a}{h_n},
\qquad
Y^h(t_n)=Y_{t_n}^h.
\end{equation}
The usual rooted-tree Taylor expansion gives
\[
Y_{t_{n+1}}^h
=
Y_{t_n}^h
+
\sum_{\tau\in\mathcal T_{\le p}}
\frac{1}{\sigma(\tau)}
F(\tau)(Y_{t_n}^h)
 Z_{t_n,t_{n+1}}^{h,\tau}
+
R_n^{(p+1),h},
\]
where \(Z_{t_n,t_{n+1}}^{h,\tau}\) denotes the
iterated-integral coefficient generated by the piecewise linear path \(Z^h\) on
\([t_n,t_{n+1}]\). The identification of elementary differentials and symmetry 
factors is the same as in the proof of
Theorem~\ref{thm:exact-controlled-bseries}.
To estimate the remainder, set
$
\theta=\frac{t-t_n}{h_n}.
$
Then \eqref{eq:local-piecewise-linear-ode} on \([0,1]\) becomes
\[
\frac{d}{d\theta}Y^h
=
\sum_{a=1}^m
F_a(Y^h)\Delta Z_n^a.
\]
Therefore, the remainder after truncation in tree order \(p\) satisfies
$
|R_n^{(p+1),h}|
\leq
C_{p,T}|\Delta Z_n|^{p+1}.
$
Since
$
|\Delta Z_n|
\leq
\|Z\|_{\alpha;[0,T]}h_n^\alpha,
$
it follows that
\[
|R_n^{(p+1),h}|
\leq
C_{p,T}h_n^{(p+1)\alpha}.
\]
By Proposition~\ref{prop:tree-coefficients-piecewise-linear},
for every \(\tau\in\mathcal T_{\le p}\), we have
\[
 Z_{t_n,t_{n+1}}^{h,\tau}
=
\frac{1}{\gamma(\tau)}
\prod_{v\in V(\tau)}
\Delta Z_n^{\ell(v)}.
\]
Substitution into the preceding B-series expansion yields
\[
Y_{t_{n+1}}^h
=
Y_{t_n}^h
+
\sum_{\tau\in\mathcal T_{\le p}}
\frac{1}{\sigma(\tau)\gamma(\tau)}
F(\tau)(Y_{t_n}^h)
\prod_{v\in V(\tau)}
\Delta Z_n^{\ell(v)}
+
R_n^{(p+1),h}.
\]
This proves
\eqref{eq:exact-piecewise-linear-bseries}.
\end{proof}

We now consider a simplified controlled Runge-Kutta scheme, 
which is a special case of the general controlled
Runge-Kutta method in Definition~\ref{def:controlled-rk} by setting
\begin{equation}
\label{eq:simplified-specialization}
\mathcal Z_{u,v}^{(a)}
=
\Delta Z_{u,v}^a A,
\qquad
\zeta_{u,v}^{(a)}
=
\Delta Z_{u,v}^a b,
\qquad
a\in\mathcal A.
\end{equation}

\begin{definition} (Simplified controlled Runge-Kutta method)
\label{def:simplified-controlled-rk}
Fix \(q\in\mathbb N\), a matrix
$
A=(a_{ij})_{1\le i,j\le q}\in\mathbb R^{q\times q}
$
and a weight vector
$
b=(b_1,\ldots,b_q)^\top\in\mathbb R^q.
$
For a step \([u,v]\subset[0,T]\) and \(y\in\mathbb R^e\), the \emph{stage values}
\(Y_i^{u,v}(y)\) are defined by
\begin{equation}
\label{eq:simplified-rk-stages}
Y_i^{u,v}(y)
:=
y+
\sum_{j=1}^q
a_{ij}
F\bigl(Y_j^{u,v}(y)\bigr)\Delta Z_{u,v},
\qquad
i=1,\ldots,q,
\end{equation}
where \(F(y)\Delta Z_{u,v}\) is the action of the linear map \(F(y)\in\mathcal L(\mathbb R^m,\mathbb R^e)\)  on the vector \(\Delta Z_{u,v}\in\mathbb R^m\).
The corresponding {\it simplified controlled Runge-Kutta map} is  
\begin{equation}
\label{eq:simplified-rk-one-step}
\Psi_{u,v}^{\mathrm{SCRK}}(y)
:=
y+
\sum_{i=1}^q
b_i
F\bigl(Y_i^{u,v}(y)\bigr)\Delta Z_{u,v}.
\end{equation}
\end{definition}

Equivalently, using \(F_a(y):=F(y)e_a\), \eqref{eq:simplified-rk-stages} and
\eqref{eq:simplified-rk-one-step} can be written componentwise as
\[
Y_i^{u,v}(y)
=
y+
\sum_{a=1}^m
\sum_{j=1}^q
a_{ij}\Delta Z_{u,v}^a
F_a\bigl(Y_j^{u,v}(y)\bigr),
\qquad
\Psi_{u,v}^{\mathrm{SCRK}}(y)
=
y+
\sum_{a=1}^m
\sum_{i=1}^q
b_i\Delta Z_{u,v}^a
F_a\bigl(Y_i^{u,v}(y)\bigr).
\]
We also define
$
c_i:=\sum_{j=1}^q a_{ij},
\,
i=1,\ldots,q.
$

\begin{remark}
\label{rem:explicit-implicit-srk}
If $A$ is strictly lower triangular, then the stage equations \eqref{eq:simplified-rk-stages} are explicit. Otherwise, the scheme is implicit, and we assume that the stage system is solvable at each step.
\end{remark}


The simplified scheme in Definition~\ref{def:simplified-controlled-rk} is obtained
from the general controlled Runge-Kutta framework by using coefficients depending
only on the first-level increment \(\Delta Z_{u,v}\). We now identify the resulting
coefficients.
By Definition~\ref{def:rk-tree-coefficients}, we define recursively
\[
\widehat\Phi:\mathcal T^0\to\mathbb R^q,
\qquad
\widehat a:\mathcal T^0\to\mathbb R
\]
by 
\begin{equation}
\label{eq:simplified-stage-weight}
\widehat\Phi(\etree):=\mathbf 1_q,
\qquad
\widehat\Phi([\tau_1\cdots\tau_n]_a)
:=
A\bigl(\widehat\Phi(\tau_1)\odot\cdots\odot\widehat\Phi(\tau_n)\bigr)
\end{equation}
and
\begin{equation}
\label{eq:simplified-output-weight}
\widehat a(\etree):=1,
\qquad
\widehat a([\tau_1\cdots\tau_n]_a)
:=
\bigl\langle
b,\,
\widehat\Phi(\tau_1)\odot\cdots\odot\widehat\Phi(\tau_n)
\bigr\rangle_{\mathbb R^q}.
\end{equation}


\begin{proposition}
\label{prop:simplified-factorization}
For \(\tau\in\mathcal T\) and \(0\le u\le v\le T\), let
$\Phi_{u,v}^{\mathrm{SCRK}}:\mathcal T^0\to\mathbb R^q$ and $a_{u,v}^{\mathrm{SCRK}}:\mathcal T^0\to\mathbb R$ denote the
stage and output coefficients associated with the simplified scheme
\eqref{eq:simplified-rk-stages} and~\eqref{eq:simplified-rk-one-step}, respectively. Then, for $i=1,\ldots, q,$
\begin{equation}
\label{eq:simplified-stage-factorization}
\bigl(\Phi_{u,v}^{\mathrm{SCRK}}(\tau)\bigr)_i
=
\bigl(\widehat\Phi(\tau)\bigr)_i
\prod_{w\in V(\tau)}
\Delta Z_{u,v}^{\ell(w)},
\qquad
a_{u,v}^{\mathrm{SCRK}}(\tau)
=
\widehat a(\tau)
\prod_{w\in V(\tau)}
\Delta Z_{u,v}^{\ell(w)}.
\end{equation}
\end{proposition}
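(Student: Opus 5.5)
The argument is a structural induction on the order $|\tau|\ge 1$, run on the recursion of Definition~\ref{def:rk-tree-coefficients} with the specialization \eqref{eq:simplified-specialization}. We prove the stage identity first, since the output identity follows from it in one line. Two facts will be used repeatedly. First, scalar factors commute with the componentwise product: $(\lambda_1x^{(1)})\odot\cdots\odot(\lambda_nx^{(n)})=(\prod_\ell\lambda_\ell)\,x^{(1)}\odot\cdots\odot x^{(n)}$. Second, the vertex set of $\tau=[\tau_1\cdots\tau_n]_a$ is the disjoint union of the root (label $a$) and $V(\tau_1),\dots,V(\tau_n)$. Hence
\[
\prod_{w\in V(\tau)}\Delta Z_{u,v}^{\ell(w)}=\Delta Z_{u,v}^{a}\prod_{\ell=1}^n\prod_{w\in V(\tau_\ell)}\Delta Z_{u,v}^{\ell(w)}.
\]

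\textbf{Base case.} Take $\tau=\bullet_a=[\etree]_a$, which is the case $n=0$. Then $\Phi^{\mathrm{SCRK}}_{u,v}(\bullet_a)=\mathcal Z^{(a)}_{u,v}\mathbf 1_q=\Delta Z^a_{u,v}A\mathbf 1_q=\Delta Z^a_{u,v}\widehat\Phi(\bullet_a)$. Likewise $a^{\mathrm{SCRK}}_{u,v}(\bullet_a)=\langle\Delta Z^a_{u,v}b,\mathbf 1_q\rangle=\Delta Z^a_{u,v}\widehat a(\bullet_a)$. Since $V(\bullet_a)$ consists of one vertex labelled $a$, both identities in \eqref{eq:simplified-stage-factorization} hold.

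\textbf{Inductive step.} Write $\tau=[\tau_1\cdots\tau_n]_a$ and assume the stage identity for each $\tau_\ell$, all of which have smaller order. By \eqref{eq:stage-tree-coefficient},
\[
\Phi^{\mathrm{SCRK}}_{u,v}(\tau)=\Delta Z^a_{u,v}A\Bigl(\bigodot_{\ell=1}^n\Bigl(\prod_{w\in V(\tau_\ell)}\Delta Z^{\ell(w)}_{u,v}\Bigr)\widehat\Phi(\tau_\ell)\Bigr).
\]
We pull the scalars out of the $\odot$-product and then through the linear map $A$. Applying the vertex decomposition and comparing with \eqref{eq:simplified-stage-weight} gives the stage identity for $\tau$. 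For the output coefficient, substitute the same factorized stage coefficients of the branches into $a^{\mathrm{SCRK}}_{u,v}(\tau)=\langle\Delta Z^a_{u,v}b,\bigodot_\ell\Phi^{\mathrm{SCRK}}_{u,v}(\tau_\ell)\rangle$. Bilinearity of the inner product then yields $\widehat a(\tau)\prod_{w\in V(\tau)}\Delta Z^{\ell(w)}_{u,v}$, using \eqref{eq:simplified-output-weight}.

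No genuine obstacle is expected. The only point requiring care is bookkeeping: check that the vertex decomposition is a disjoint union, so that each label is counted exactly once. One should also note that the empty tree is handled by the convention $\Phi(\etree)=\widehat\Phi(\etree)=\mathbf 1_q$ with an empty product equal to $1$. This convention makes the $n=0$ case of the inductive step coincide with the base case.
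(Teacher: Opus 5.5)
Your proposal is correct and follows essentially the same route as the paper: induction on $|\tau|$ with the one-node tree as base case, pulling the scalar increments out of the componentwise product and through $A$ (respectively out of the inner product with $b$), and using the decomposition of $V(\tau)$ into the root and the vertex sets of the branches. The only cosmetic point is that you use $\ell$ both as the branch index and as the label map $\ell(w)$; rename the index (the paper uses $j$) to avoid the clash.
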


\begin{proof}

We prove \eqref{eq:simplified-stage-factorization} by induction for \(|\tau|\geq 1\).
For the one-node tree $\tau=\bullet_a$ with $a\in \mathcal{A}$, applying Definition~
\ref{def:rk-tree-coefficients} together with \eqref {eq:simplified-stage-weight}, \eqref {eq:simplified-output-weight} and
\eqref{eq:simplified-specialization} yields
\begin{align*}
\Phi_{u,v}^{\mathrm{SCRK}}(\bullet_a)
=&\ 
\mathcal Z_{u,v}^{(a)}\mathbf 1_q
=
\Delta Z_{u,v}^a A\mathbf 1_q
=
\widehat\Phi(\bullet_a)\Delta Z_{u,v}^a, \\
a_{u,v}^{\mathrm{SCRK}}(\bullet_a)
=&\ 
\left\langle
\zeta_{u,v}^{(a)},\mathbf 1_q
\right\rangle_{\mathbb R^q}
=
\Delta Z_{u,v}^a
\left\langle
b,\mathbf 1_q
\right\rangle_{\mathbb R^q}
=
\widehat a(\bullet_a)\Delta Z_{u,v}^a.
\end{align*}

Since \(V(\bullet_a)\) consists only of the root labeled by \(a\),
both identities hold for \(|\tau|=1\).
Suppose that the identities hold for all trees of order strictly
smaller than \(|\tau|\), and let
$
\tau=[\tau_1\cdots\tau_n]_a$ with $|\tau|\geq2.
$
By Definition~\ref{def:rk-tree-coefficients},
\eqref{eq:simplified-specialization} and the induction hypothesis,
we obtain
\[
\begin{aligned}
\Phi_{u,v}^{\mathrm{SCRK}}(\tau)
&=
\mathcal Z_{u,v}^{(a)}
\bigl(
\Phi_{u,v}^{\mathrm{SCRK}}(\tau_1)
\odot\cdots\odot
\Phi_{u,v}^{\mathrm{SCRK}}(\tau_n)
\bigr)
\\
&=
\Delta Z_{u,v}^a
A\bigl(
\widehat\Phi(\tau_1)
\odot\cdots\odot
\widehat\Phi(\tau_n)
\bigr)
\prod_{j=1}^n
\prod_{w\in V(\tau_j)}
\Delta Z_{u,v}^{\ell(w)}
\\
&=
\widehat\Phi(\tau)
\prod_{w\in V(\tau)}
\Delta Z_{u,v}^{\ell(w)}.
\end{aligned}
\]
Here the last equality follows from the recursive definition of
\(\widehat\Phi\) and the fact that the vertices of \(\tau\) consist
of its root, labeled by \(a\), together with the vertices of
\(\tau_1,\ldots,\tau_n\).

Similarly, by the definition of the output coefficient and the induction
hypothesis, we obtain
\[
a_{u,v}^{\mathrm{SCRK}}(\tau)
=
\widehat a(\tau)
\prod_{w\in V(\tau)}
\Delta Z_{u,v}^{\ell(w)} .
\]
This proves \eqref{eq:simplified-stage-factorization} and completes the
induction.
\end{proof}

The B-series of the simplified scheme is therefore obtained by specializing the
general controlled Runge-Kutta B-series in Theorem~\ref{thm:rk-bseries} via
\eqref{eq:simplified-specialization}.

\begin{theorem} (Controlled B-series expansion of the simplified Runge-Kutta map)
\label{thm:simplified-bseries}
For $(p+1) \alpha>1$, let $
F_a\in C_b^{p+1}(\mathbb R^e;\mathbb R^e)$ for $a\in\mathcal A.
$
Then, for every \(R>0\), there exists a constant \(C_{p,R,T}>0\) such that, for all
\(0\le u\le v\le T\) and all \(|y|\le R\),
\begin{equation}
\label{eq:simplified-bseries}
\Psi_{u,v}^{\mathrm{SCRK}}(y)
=
y
+
\sum_{\tau\in\mathcal T_{\le p}}
\frac{\widehat a(\tau)}{\sigma(\tau)}
F(\tau)(y)
\prod_{w\in V(\tau)}
\Delta Z_{u,v}^{\ell(w)}
+
\mathcal R_{u,v}^{(p+1),\mathrm{SCRK}}(y),
\end{equation}
where
$$
|
\mathcal R_{u,v}^{(p+1),\mathrm{SCRK}}(y)
|
\le
C_{p,R,T}|v-u|^{(p+1)\alpha}.
$$
\end{theorem}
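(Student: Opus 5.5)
The plan is to obtain \eqref{eq:simplified-bseries} by specializing Theorem~\ref{thm:rk-bseries} to the coefficients \eqref{eq:simplified-specialization}, and then rewriting the numerical weights with Proposition~\ref{prop:simplified-factorization}.

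\textbf{Step 1: Assumption~\ref{ass:rk-solvability-size}.} For the coefficients \(\mathcal Z_{u,v}^{(a)}=\Delta Z_{u,v}^aA\) and \(\zeta_{u,v}^{(a)}=\Delta Z_{u,v}^ab\), we have
\[
\|\mathcal Z_{u,v}^{(a)}\|_\infty+|\zeta_{u,v}^{(a)}|_\infty
\le |\Delta Z^a_{u,v}|\bigl(\|A\|_\infty+|b|_\infty\bigr)
\le \|Z\|_{\alpha;[0,T]}\bigl(\|A\|_\infty+|b|_\infty\bigr)|v-u|^\alpha .
\]
Here \(\|Z\|_{\alpha;[0,T]}<\infty\) because \((Z,Z')\in\mathcal D^\alpha_{\mathbf X}\), so \eqref{eq:rk-coefficient-size} holds with \(C_{\mathrm{CRK}}:=\|Z\|_{\alpha;[0,T]}(\|A\|_\infty+|b|_\infty)\).

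For stage solvability, the statement itself carries no hypothesis. By Remark~\ref{rem:explicit-implicit-srk}, the stages are explicit when \(A\) is strictly lower triangular. In the implicit case, I would either invoke the standing solvability assumption of that remark, or give a short contraction argument. The stage map \((Y_j)_j\mapsto (y+\sum_{a,j}a_{ij}\Delta Z^a_{u,v}F_a(Y_j))_i\) has Lipschitz constant at most \(C\|DF\|_\infty|v-u|^\alpha\) in the max norm. It is therefore a contraction once \(|v-u|\) is small. For larger steps, the expansion is trivial: every term is bounded by a constant and \(|v-u|^{(p+1)\alpha}\ge c_T>0\) when \(|v-u|\) is bounded below, so the remainder can be absorbed into \(C_{p,R,T}\).

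\textbf{Step 2: apply Theorem~\ref{thm:rk-bseries}.} This gives
\[
\Psi^{\mathrm{SCRK}}_{u,v}(y)=y+\sum_{\tau\in\mathcal T_{\le p}}\frac{a^{\mathrm{SCRK}}_{u,v}(\tau)}{\sigma(\tau)}F(\tau)(y)+\mathcal R^{(p+1)}_{u,v}(y),
\]
with \(|\mathcal R^{(p+1)}_{u,v}(y)|\le C_{p,R,T}|v-u|^{(p+1)\alpha}\) uniformly in \(|y|\le R\). The SCRK map is exactly the CRK map with the specialized coefficients, as the componentwise form following Definition~\ref{def:simplified-controlled-rk} shows.

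\textbf{Step 3: factorize the weights.} Proposition~\ref{prop:simplified-factorization} gives \(a^{\mathrm{SCRK}}_{u,v}(\tau)=\widehat a(\tau)\prod_{w\in V(\tau)}\Delta Z^{\ell(w)}_{u,v}\). Substituting this into Step 2 yields \eqref{eq:simplified-bseries}, with \(\mathcal R^{(p+1),\mathrm{SCRK}}_{u,v}:=\mathcal R^{(p+1)}_{u,v}\).

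\textbf{Main obstacle.} The only nontrivial point is that the constant must be uniform over all \(0\le u\le v\le T\), which needs stage solvability (and the stage bound \eqref{E4-2}) for every step. I would handle it by the contraction argument for \(|v-u|\le\delta\), with \(\delta\) depending only on \(A\), \(\|Z\|_\alpha\) and \(\|DF\|_\infty\), together with the bounded-step absorption described in Step 1. Alternatively, I would defer to the solvability convention of Remark~\ref{rem:explicit-implicit-srk}.
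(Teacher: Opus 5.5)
Your proposal is correct and follows the paper's proof: verify Assumption~\ref{ass:rk-solvability-size} via $|\Delta Z^a_{u,v}|\le \|Z^a\|_{\alpha;[0,T]}|v-u|^\alpha$ together with the solvability convention of Remark~\ref{rem:explicit-implicit-srk}, apply Theorem~\ref{thm:rk-bseries}, and substitute the factorization from Proposition~\ref{prop:simplified-factorization}. Your extra contraction/absorption discussion is not needed, because the stage bound \eqref{E4-2} holds for any step size once a stage solution exists (since $F_a$ is bounded); moreover, your large-step absorption already presupposes that the implicit stages exist, so in the implicit case you cannot avoid the standing solvability assumption anyway.
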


\begin{proof}
The \(\alpha\)-H\"older continuity of \(Z\) yields
\[
|\Delta Z_{u,v}^a|
\leq
\|Z^a\|_{\alpha;[0,T]}|v-u|^\alpha,
\qquad a\in\mathcal A.
\]
Consequently,
$$
\|\mathcal Z_{u,v}^{(a)}\|_\infty
+
|\zeta_{u,v}^{(a)}|_\infty
=
|\Delta Z_{u,v}^a|
\bigl(
\|A\|_\infty+|b|_\infty
\bigr)
\leq
\bigl(
\|A\|_\infty+|b|_\infty
\bigr)
\|Z^a\|_{\alpha;[0,T]}
|v-u|^\alpha
\leq
C_{\mathrm{CRK}}|v-u|^\alpha.
$$
Together with the solvability condition stated in Remark~\ref{rem:explicit-implicit-srk}, Assumption~\ref{ass:rk-solvability-size} is satisfied. Applying Theorem~\ref{thm:rk-bseries} to the simplified controlled Runge-Kutta method gives
\[
\Psi_{u,v}^{\mathrm{SCRK}}(y)
=
y+
\sum_{\tau\in\mathcal T_{\le p}}
\frac{a_{u,v}^{\mathrm{SCRK}}(\tau)}{\sigma(\tau)}
F(\tau)(y)
+
\mathcal R_{u,v}^{(p+1),\mathrm{SCRK}}(y),
\]
where
$$
|
\mathcal R_{u,v}^{(p+1),\mathrm{SCRK}}(y)
|
\leq
C_{p,R,T}|v-u|^{(p+1)\alpha}.
$$
By Proposition~\ref{prop:simplified-factorization},  
$$
a_{u,v}^{\mathrm{SCRK}}(\tau)
=
\widehat a(\tau)
\prod_{w\in V(\tau)}
\Delta Z_{u,v}^{\ell(w)}.
$$
Substituting the expression for $a_{u, v}^{\mathrm{SCRK}}(\tau)$ into the preceding expansion yields
\[
\Psi_{u,v}^{\mathrm{SCRK}}(y)
=
y+
\sum_{\tau\in\mathcal T_{\le p}}
\frac{\widehat a(\tau)}{\sigma(\tau)}
F(\tau)(y)
\prod_{w\in V(\tau)}
\Delta Z_{u,v}^{\ell(w)}
+
\mathcal R_{u,v}^{(p+1),\mathrm{SCRK}}(y).
\]
Hence \eqref{eq:simplified-bseries} holds, and the remainder estimate
follows directly from Theorem~\ref{thm:rk-bseries}. This completes the
proof of Theorem~\ref{thm:simplified-bseries}.
\end{proof}



The exact and numerical controlled B-series expansions in
Corollary~\ref{cor:exact-bseries-piecewise-linear} and
Theorem~\ref{thm:simplified-bseries} share the same elementary
differential structure, while their coefficients are given by
\(\gamma(\tau)^{-1}\) and \(\widehat a(\tau)\), respectively.

\begin{definition}
\label{def:simplified-tree-order}
We say that the \emph{simplified controlled Runge-Kutta method
has tree order \(p \in \mathbb N\)} if
\begin{equation}
\label{eq:simplified-tree-order-condition}
\widehat a(\tau)
=
\frac{1}{\gamma(\tau)},
\qquad
\tau\in\mathcal T_{\le p} .
\end{equation}
\end{definition}
 
\begin{theorem}
\label{thm:simplified-local-error-piecewise-linear}
Assume that the simplified controlled
Runge-Kutta method has tree order \(p\) in the sense of
Definition~\ref{def:simplified-tree-order}. Then, for every \(R>0\), there exists
a constant \(C_{p,R,T}>0\) such that, whenever
$
|Y_{t_n}^h|\le R,
$
one has
\begin{equation}
\label{eq:simplified-local-error-piecewise-linear}
\left|
Y_{t_{n+1}}^h
-
\Psi_{t_n,t_{n+1}}^{\mathrm{SCRK}}(Y_{t_n}^h)
\right|
\le
C_{p,R,T} h_n^{(p+1)\alpha},
\end{equation}
for every grid interval \([t_n,t_{n+1}]\).
\end{theorem}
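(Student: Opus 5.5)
The argument subtracts the two B-series expansions already available on a single grid interval: the exact expansion of $Y^h$ from Corollary~\ref{cor:exact-bseries-piecewise-linear} and the numerical expansion of the simplified map from Theorem~\ref{thm:simplified-bseries}. It has the same structure as the proof of Theorem~\ref{thm:local-truncation-error-rk}.

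\textbf{Step 1 (exact side).} Fix a grid interval $[t_n,t_{n+1}]$ and set $y:=Y^h_{t_n}$ with $|y|\le R$. Apply Corollary~\ref{cor:exact-bseries-piecewise-linear} to write
\[
Y^h_{t_{n+1}}=y+\sum_{\tau\in\mathcal T_{\le p}}\frac{1}{\sigma(\tau)\gamma(\tau)}F(\tau)(y)\prod_{w\in V(\tau)}\Delta Z_n^{\ell(w)}+R_n^{(p+1),h}.
\]
Here $|R_n^{(p+1),h}|\le C_{p,T}h_n^{(p+1)\alpha}$. This bound comes from the rescaled ODE on $[0,1]$ with vector field $\sum_a F_a\Delta Z_n^a$. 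Its Taylor remainder is controlled by $|\Delta Z_n|^{p+1}$ and by bounded derivatives of $F_a$ up to order $p+1$. These derivatives are uniform in the state, so the constant is independent of $R$.

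\textbf{Step 2 (numerical side).} Apply Theorem~\ref{thm:simplified-bseries} with $u=t_n$, $v=t_{n+1}$ and the same $y$. Note that $\Delta Z_{u,v}=\Delta Z_n$. This gives the same sum with $\widehat a(\tau)$ in place of $1/\gamma(\tau)$, plus a remainder bounded by $C_{p,R,T}h_n^{(p+1)\alpha}$. The hypotheses of that theorem rest on Assumption~\ref{ass:rk-solvability-size}. Its size part was verified in the proof of Theorem~\ref{thm:simplified-bseries} from the H\"older bound $|\Delta Z_n^a|\le\|Z^a\|_{\alpha}h_n^\alpha$. Its solvability part is the standing assumption of Remark~\ref{rem:explicit-implicit-srk}, and it is automatic for explicit tableaux.

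\textbf{Step 3 (subtract).} Subtracting the two expansions gives
\[
Y^h_{t_{n+1}}-\Psi^{\mathrm{SCRK}}_{t_n,t_{n+1}}(y)=\sum_{\tau\in\mathcal T_{\le p}}\frac{\gamma(\tau)^{-1}-\widehat a(\tau)}{\sigma(\tau)}F(\tau)(y)\prod_{w\in V(\tau)}\Delta Z_n^{\ell(w)}+R_n^{(p+1),h}-\mathcal R^{(p+1),\mathrm{SCRK}}_{t_n,t_{n+1}}(y).
\]
The tree-order condition \eqref{eq:simplified-tree-order-condition} makes every term of the sum vanish. The triangle inequality then yields \eqref{eq:simplified-local-error-piecewise-linear} with constant $C_{p,T}+C_{p,R,T}$, which is uniform over all grid intervals.

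\textbf{Main obstacle.} The only delicate point is uniformity of the constants in $n$ and in the partition. Both remainder bounds depend only on $\|Z\|_{\alpha;[0,T]}$, the $C_b^{p+1}$ norms of the $F_a$, the tableau $(A,b)$ and $R$, and not on $h$. I would check this explicitly in Theorem~\ref{thm:rk-bseries} as invoked here. There the stage increments obey $|Y_i-y|\le C|\Delta Z_n|$ with $C$ depending on $\|A\|_\infty$ and $\sup|F_a|$. Hence all Taylor remainders are bounded by powers of $|\Delta Z_n|\lesssim h_n^\alpha$, with partition-independent constants.
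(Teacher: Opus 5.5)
Your proposal is correct and follows the paper's proof essentially verbatim. You expand $Y^h_{t_{n+1}}$ via Corollary~\ref{cor:exact-bseries-piecewise-linear} and $\Psi^{\mathrm{SCRK}}_{t_n,t_{n+1}}(Y^h_{t_n})$ via Theorem~\ref{thm:simplified-bseries}, cancel the tree sum using $\widehat a(\tau)=1/\gamma(\tau)$, and bound the two remainders; your additional check that the constants are independent of the partition usefully makes explicit what the paper leaves implicit.
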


\begin{proof}
Corollary~\ref{cor:exact-bseries-piecewise-linear} gives the expansion
\[
Y_{t_{n+1}}^h
=
Y_{t_n}^h
+
\sum_{\tau\in\mathcal T_{\le p}}
\frac{1}{\sigma(\tau)\gamma(\tau)}
F(\tau)(Y_{t_n}^h)
\prod_{v\in V(\tau)}
\Delta Z_n^{\ell(v)}
+
R_n^{(p+1),h},
\]
where
$$
|R_n^{(p+1),h}|
\le
C_{p,T}h_n^{(p+1)\alpha}.
$$
On the other hand, from Theorem~\ref{thm:simplified-bseries} with
\(u=t_n\), \(v=t_{n+1}\) and \(y=Y_{t_n}^h\), it follows that
\[
\Psi_{t_n,t_{n+1}}^{\mathrm{SCRK}}(Y_{t_n}^h)
=
Y_{t_n}^h
+
\sum_{\tau\in\mathcal T_{\le p}}
\frac{\widehat a(\tau)}{\sigma(\tau)}
F(\tau)(Y_{t_n}^h)
\prod_{v\in V(\tau)}
\Delta Z_n^{\ell(v)}
+
\mathcal R_{t_n,t_{n+1}}^{(p+1),\mathrm{SCRK}}(Y_{t_n}^h),
\]
with
$$
\left|
\mathcal R_{t_n,t_{n+1}}^{(p+1),\mathrm{SCRK}}(Y_{t_n}^h)
\right|
\le
C_{p,R,T}h_n^{(p+1)\alpha}.
$$
Subtracting the two expansions yields
\[
\begin{aligned}
Y_{t_{n+1}}^h
-
\Psi_{t_n,t_{n+1}}^{\mathrm{SCRK}}(Y_{t_n}^h)
={}&
\sum_{\tau\in\mathcal T_{\le p}}
\frac{1/\gamma(\tau)-\widehat a(\tau)}{\sigma(\tau)}
F(\tau)(Y_{t_n}^h)
\prod_{v\in V(\tau)}
\Delta Z_n^{\ell(v)}
+
R_n^{(p+1),h}
-
\mathcal R_{t_n,t_{n+1}}^{(p+1),\mathrm{SCRK}}(Y_{t_n}^h).
\end{aligned}
\]
By the tree order condition \eqref{eq:simplified-tree-order-condition}, the sum
over \(\mathcal T_{\le p}\) vanishes term by term. Therefore,
\[
|
Y_{t_{n+1}}^h
-
\Psi_{t_n,t_{n+1}}^{\mathrm{SCRK}}(Y_{t_n}^h)
|
\le
|R_n^{(p+1),h}|
+
|
\mathcal R_{t_n,t_{n+1}}^{(p+1),\mathrm{SCRK}}(Y_{t_n}^h)
|
\le
C_{p,R,T}h_n^{(p+1)\alpha}.
\]
This finishes the proof of \eqref{eq:simplified-local-error-piecewise-linear}.
\end{proof}
For $\alpha \in (\frac{1}{3}, \frac{1}{2}]$, the choice $p=2$ already gives
$(p+1) \alpha-1=3 \alpha-1>0$. In the sequel, we focus on $p=3$, which yields the convergence rate $4 \alpha-1$ and connects the tree order conditions with the classical third-order Runge-Kutta conditions. The next result makes this connection explicit.
\begin{proposition}
\label{prop:simplified-third-order-conditions}
The simplified controlled Runge-Kutta method has tree order \(3\) if and only if
\begin{align}\label{E5-1}
\sum_{i=1}^q b_i = 1,\quad
\sum_{i=1}^q b_i c_i = \frac{1}{2},\quad
\sum_{i=1}^q b_i c_i^2 = \frac{1}{3},\quad
\sum_{i,j=1}^q b_i a_{ij}c_j = \frac{1}{6}.
\end{align}
\end{proposition}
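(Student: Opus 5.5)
By Definition~\ref{def:simplified-tree-order}, tree order $3$ means $\widehat a(\tau)=1/\gamma(\tau)$ for every decorated tree $\tau$ with $1\le|\tau|\le 3$. The plan is to list these trees, compute both sides from the recursions \eqref{eq:simplified-stage-weight}--\eqref{eq:simplified-output-weight} and Definition~\ref{def:symmetry-factor}, and check that the resulting equations are exactly \eqref{E5-1}.

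\textbf{Enumerating trees and computing $\widehat a$.} As in the proof of Proposition~\ref{prop:tree-order-conditions-order-three}, every nonempty decorated tree of order at most three has one of the forms
\[
\bullet_a,\qquad [\bullet_b]_a,\qquad [[\bullet_c]_b]_a,\qquad [\bullet_b\bullet_c]_a,\qquad a,b,c\in\mathcal A.
\]
The recursion \eqref{eq:simplified-stage-weight} uses only $A$ and ignores the labels, so $\widehat\Phi(\bullet_b)=A\mathbf 1_q=c$, where $c=(c_1,\ldots,c_q)^\top$. Likewise $\widehat\Phi([\bullet_c]_b)=Ac$. Substituting into \eqref{eq:simplified-output-weight} gives
\[
\widehat a(\bullet_a)=\sum_i b_i,\qquad
\widehat a([\bullet_b]_a)=\sum_i b_ic_i,\qquad
\widehat a([[\bullet_c]_b]_a)=\sum_{i,j}b_ia_{ij}c_j,\qquad
\widehat a([\bullet_b\bullet_c]_a)=\sum_i b_ic_i^2 .
\]

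\textbf{Computing $\gamma$.} From the recursion $\gamma(\tau)=|\tau|\prod_i\gamma(\tau_i)$:
\begin{itemize}
\item $\gamma(\bullet_a)=1$;
\item $\gamma([\bullet_b]_a)=2$;
\item $\gamma([\bullet_b\bullet_c]_a)=3$;
\item $\gamma([[\bullet_c]_b]_a)=3\cdot\gamma([\bullet_c]_b)=3\cdot 2=6$.
\end{itemize}
None of these depends on the labels.

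\textbf{Conclusion and the one point to watch.} Equating the two lists tree by tree gives precisely the four identities \eqref{E5-1}. Because both $\widehat a(\tau)$ and $\gamma(\tau)$ are independent of the labels $a,b,c$, the family of conditions indexed by all labels collapses to these four equations. This gives both directions of the equivalence, since every tree in $\mathcal T_{\le3}$ was enumerated. The step needing most care is the label-independence of $\widehat\Phi$: the decorations enter only through the factors $\Delta Z^{\ell(w)}$ separated off in Proposition~\ref{prop:simplified-factorization}, not through $\widehat\Phi$ or $\widehat a$ themselves. I also need to confirm that the tall tree gives $6$ rather than $3$ in the $\gamma$ recursion. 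Otherwise the proof is a direct calculation.
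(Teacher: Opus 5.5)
Your proposal is correct and is essentially the paper's proof. The paper likewise tabulates $\widehat a(\tau)$ and $\gamma(\tau)$ for the four shapes $\bullet_a$, $[\bullet_b]_a$, $[\bullet_b\bullet_c]_a$, $[[\bullet_c]_b]_a$, and uses the fact that $\widehat a$ depends only on the tree shape to reduce the labelled conditions to the four identities \eqref{E5-1}.
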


\begin{proof}
By \eqref{eq:simplified-stage-weight}, \eqref{eq:simplified-output-weight} and 
\(c=A\mathbf 1_q\),  the scalar weights of the rooted trees of order at most
three are
\[
\begin{array}{c|c|c}
\tau & \widehat a(\tau) & \gamma(\tau) \\[2mm]
\hline
\bullet_a
&
\langle b,\mathbf 1_q\rangle
=
\displaystyle\sum_{i=1}^q b_i
&
1
\\[3mm]
[\bullet_b]_a
&
\langle b,c\rangle
=
\displaystyle\sum_{i=1}^q b_i c_i
&
2
\\[3mm]
[\bullet_b\bullet_c]_a
&
\langle b,c\odot c\rangle
=
\displaystyle\sum_{i=1}^q b_i c_i^2
&
3
\\[3mm]
[[\bullet_c]_b]_a
&
\langle b,Ac\rangle
=
\displaystyle\sum_{i,j=1}^q b_i a_{ij}c_j
&
6 
\end{array}
\]
These are the only rooted tree shapes of order at most three, up to vertex labels. Moreover, since \(\widehat a(\tau)\) depends only on the underlying tree shape, the simplified tree order condition
\[\widehat a(\tau)=\frac{1}{\gamma(\tau)},
\qquad
\tau\in\mathcal T_{\le 3},
 \]
is equivalent to \eqref{E5-1}.
\end{proof}

\begin{example}
\label{ex:heun-controlled}
The explicit Butcher table
\begin{equation*}
\begin{array}{c|ccc}
0   & 0   & 0   & 0 \\
\frac13 & \frac13 & 0 & 0 \\
\frac23 & 0 & \frac23 & 0 \\
\hline
& \frac14 & 0 & \frac34
\end{array}
\end{equation*}
satisfies \eqref{E5-1}.
The coefficients are chosen from a classical third-order
Runge-Kutta method; see, for example, \cite{Butcher1963}.
The resulting simplified controlled Runge-Kutta method has tree order 3.
\end{example}

The simplified scheme uses products of the increments
\(\Delta Z_{u,v}\) instead of controlled coefficients. The next
section combines the local error estimate with the approximation error
between \(Z\) and \(Z^h\) to derive the global error bound.

\section{Error Analysis and Numerical Experiments}
\label{sec:global-error}

In this section, we establish the global error bounds for the
controlled Runge-Kutta methods and validate the theoretical convergence
rates through numerical experiments.
\subsection{Global error analysis}
The local estimates in Sections~\ref{sec:runge-kutta-methods}
and~\ref{sec:simplified-rk} provide one-step error bounds for the
controlled Runge-Kutta method and the simplified method, respectively.
Set
$
\rho:=(p+1)\alpha>1 .
$
The global error bounds follow from the local estimates and the stability of the
solution flow.
Let
\[
\pi_h=\{0=t_0<t_1<\cdots<t_N=T\},
\qquad
h=\max_n\left|t_{n+1}-t_n\right|.
\]
For \(R>0\), denote \(B_R=\{y\in\mathbb R^e:|y|\le R\}\).
All constants below are independent of \(h\).

For \(0\le s\le t\le T\), let \(\varphi_{s,t}\) and
\(\varphi^h_{s,t}\) denote the solution flows of the controlled-driven
equation and its piecewise linear approximation, respectively. They
satisfy
\[
\varphi_{s,s}=\mathrm{Id},
\qquad
\varphi_{s,t}=\varphi_{r,t}\circ\varphi_{s,r},
\quad s\le r\le t .
\]
The stability estimate required below is the following Lipschitz bound
with respect to the initial condition.

\begin{proposition}
\label{prop:flow-stability}
Let
$
F\in C_b^3\bigl(\mathbb R^e;\mathcal L(\mathbb R^m,\mathbb R^e)\bigr).
$
Then, for every \(R>0\), there exists a constant \(L_{R,T}>0\) such that
\begin{equation}
\label{eq:flow-stability-controlled}
|\varphi_{s,t}(y)-\varphi_{s,t}(\widetilde y)|
\le
L_{R,T}|y-\widetilde y|,
\qquad
y,\widetilde y\in B_R,
\qquad
0\le s\le t\le T.
\end{equation}
Moreover, if the piecewise linear drivers \(Z^h\) have uniformly bounded
controlled rough path norms, then
\begin{equation}
\label{eq:flow-stability-piecewise-linear}
|\varphi^h_{s,t}(y)-\varphi^h_{s,t}(\widetilde y)|
\le
L_{R,T}|y-\widetilde y|,
\qquad
y,\widetilde y\in B_R,
\qquad
0\le s\le t\le T,
\end{equation}
with \(L_{R,T}\) independent of \(h\).
\end{proposition}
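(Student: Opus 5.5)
The plan is to reduce the Lipschitz bound \eqref{eq:flow-stability-controlled} to a local contraction estimate for the difference of two solutions, and then to iterate over a partition of $[s,t]$ into short intervals. Fix $y,\widetilde y\in B_R$ and $s\in[0,T]$. Let $(Y,Y')$ and $(\widetilde Y,\widetilde Y')$ be the solutions on $[s,T]$ started from $y$ and $\widetilde y$ at time $s$; they exist by Lemma~\ref{cor:global-well-posedness}, applied with time shifted to $s$. The first step is an a priori bound on $\|(Y,Y')\|_{X,\alpha;[s,T]}$ that is uniform in $y\in B_R$ and in $s$. It should follow from the fixed point construction in \cite{LiGao2026}, since $F$ and its derivatives are bounded. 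Set $D:=Y-\widetilde Y$, with Gubinelli derivative $D'=F(Y)Z'-F(\widetilde Y)Z'$. Then
\[
D_t=(y-\widetilde y)+\sint_s^t\bigl(F(Y_r)-F(\widetilde Y_r)\bigr)\,d\bZ_r .
\]

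The second step is the standard difference estimate for the composed integrand. Write $F(Y)-F(\widetilde Y)=G(Y,\widetilde Y)\,(Y-\widetilde Y)$ with $G=\int_0^1 DF(\widetilde Y+\theta D)\,d\theta$. Using $F\in C_b^3$, one checks that $(F(Y)-F(\widetilde Y),\,(F(Y)-F(\widetilde Y))')$ is controlled, with controlled norm on a subinterval $[a,b]$ at most $C_R\bigl(|D_a|+\|(D,D')\|_{X,\alpha;[a,b]}\bigr)$. Here $C_R$ depends on the a priori bounds from step one, and this is the step that uses the third derivative. Inserting this into Lemma~\ref{thm:controlledintegral}, with the $|t-s|^{3\alpha}$ remainder and the $Z$-terms, gives on an interval of length $\delta$
\[
\|D'\|_{\alpha;[a,b]}+\|R^D\|_{2\alpha;[a,b]}\le C_R\bigl(|D_a|+\delta^{\alpha}(\|D'\|_{\alpha;[a,b]}+\|R^D\|_{2\alpha;[a,b]})\bigr).
\]
Choosing $\delta$ small, depending only on $R$, $T$, $\|\mathbf X\|$ and $\|(Z,Z')\|$, we absorb the right-hand term. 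This yields $\sup_{[a,b]}|D|\le K|D_a|$.

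The third step is the iteration. Cover $[s,t]$ by at most $\lceil T/\delta\rceil$ intervals of length $\delta$. Chaining the local bound over these intervals gives $|D_t|\le K^{\lceil T/\delta\rceil}|y-\widetilde y|$, which is \eqref{eq:flow-stability-controlled} with $L_{R,T}:=K^{\lceil T/\delta\rceil}$. The a priori bound from step one is essential here: it keeps the solutions in a ball, of radius depending only on $R$ and $T$, on which the constants are uniform, even though the intermediate states $Y_a$ may leave $B_R$.

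For \eqref{eq:flow-stability-piecewise-linear}, the same argument is repeated with $\mathbf Z^h=(Z^h,0)$. Every constant above depends on the driver only through $\|\mathbf X\|_{\alpha;[0,T]}$ and the controlled norm of the driver. Under the stated uniform bound on the controlled norms of $Z^h$, the constants $\delta$, $K$, and hence $L_{R,T}$, are independent of $h$. The main obstacle is step two: obtaining a linear difference estimate for the composition $F(Y)-F(\widetilde Y)$ in the controlled norm, with constants uniform over $B_R$. I would follow the standard stability proof for RDEs in \cite[Thm.~8.5]{FH2020}, replacing $\mathbf X$ by $\mathbf Z$ and tracking the extra contraction $(H'\diamond Z')\mathbb X$.
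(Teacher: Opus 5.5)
Your proposal is correct and follows the same route as the paper. The paper's proof simply invokes the Lipschitz dependence of the controlled-driven solution map on the initial datum from \cite{LiGao2026}, and notes that the constant depends on the driver only through its controlled norm, hence is uniform in $h$ under the stated bound on $\mathbf Z^h=(Z^h,0)$; what you add is the argument behind that citation (uniform a priori bound, $C_b^3$ difference estimate for $F(Y)-F(\widetilde Y)$, absorption on intervals of length $\delta$, and chaining over $\lceil T/\delta\rceil$ steps), i.e.\ the standard Friz--Hairer stability proof with the extra contraction $(H'\diamond Z')\mathbb X$ tracked.
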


\begin{proof}
The estimate follows from the local Lipschitz continuity of the
solution map for the controlled-driven rough differential equation
with respect to the initial condition; see, e.g., \cite{LiGao2026}.
The same argument applies to the piecewise linear drivers \(Z^h\).
Since the controlled rough path norms of \(Z^h\) are uniformly bounded,
the Lipschitz constant can be chosen independently of \(h\).
\end{proof}

The following proposition shows how the local error and the stability
estimate yield a global error bound. In the subsequent applications,
the stability estimate is obtained from Proposition~\ref{prop:flow-stability},
while the local error estimate follows from the corresponding local
expansion results.

\begin{proposition}
\label{prop:local-to-global}
Let \(\{\psi_{u,v}\}_{0\le u\le v\le T}\) be a flow on \(\mathbb R^e\), that is,
$
\psi_{u,u}=\mathrm{Id},
\,
\psi_{v,w}\circ\psi_{u,v}=\psi_{u,w},
\,
0\le u\le v\le w\le T.
$
Let \(\Psi_{u,v}:\mathbb R^e\to\mathbb R^e\) be a one-step map, and let
\(\rho>1\). Suppose that there exist constants \(L_{R,T}>0\) and
\(C_{\mathrm{loc}}>0\) such that
\begin{equation}
\label{eq:abstract-flow-stability}
|\psi_{u,v}(y)-\psi_{u,v}(\widetilde y)|
\le
L_{R,T}|y-\widetilde y|,
\qquad
y,\widetilde y\in B_R,
\qquad
0\le u\le v\le T,
\end{equation}
and
\begin{equation}
\label{eq:abstract-local-defect}
|\psi_{u,v}(y)-\Psi_{u,v}(y)|
\le
C_{\mathrm{loc}}|v-u|^\rho,
\qquad
y\in B_R,
\qquad
0\le u\le v\le T.
\end{equation}
Define the reference values and the numerical sequence by
\[
Y_{t_n}^{\psi}:=\psi_{0,t_n}(Y_0),
\qquad
y_0=Y_0,
\qquad
y_{n+1}:=\Psi_{t_n,t_{n+1}}(y_n),
\qquad
n=0,\ldots,N-1.
\]
Assume that, for every \(n\), the points
$
y_n,\,
\psi_{t_n,t_{n+1}}(y_n),
\,
\Psi_{t_n,t_{n+1}}(y_n)
$
belong to \(B_R\). Then
\begin{equation}
\label{eq:abstract-global-error}
\max_{0\le n\le N}|Y_{t_n}^{\psi}-y_n|
\le
L_{R,T}C_{\mathrm{loc}}T h^{\rho-1}.
\end{equation}
\end{proposition}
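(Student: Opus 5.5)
The plan is the standard Lady Windermere's fan telescoping argument, using the flow property of \(\psi\) to move each local defect forward to the final time. Set \(e_n:=Y^{\psi}_{t_n}-y_n\). Since \(Y^{\psi}_{t_n}=\psi_{0,t_n}(Y_0)\), and \(\psi_{t_k,t_n}\circ\psi_{0,t_k}=\psi_{0,t_n}\) with \(\psi_{t_n,t_n}=\mathrm{Id}\), I will write the telescoping identity
\[
Y^{\psi}_{t_n}-y_n
=\sum_{k=0}^{n-1}\Bigl(\psi_{t_{k+1},t_n}\bigl(\psi_{t_k,t_{k+1}}(y_k)\bigr)-\psi_{t_{k+1},t_n}\bigl(\Psi_{t_k,t_{k+1}}(y_k)\bigr)\Bigr).
\]
It follows by inserting \(\psi_{t_k,t_n}(y_k)=\psi_{t_{k+1},t_n}(\psi_{t_k,t_{k+1}}(y_k))\). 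It uses \(y_0=Y_0\) for the first term and \(y_{k+1}=\Psi_{t_k,t_{k+1}}(y_k)\) to identify \(\psi_{t_{k+1},t_n}(y_{k+1})\).

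Next I bound each summand. Both arguments \(\psi_{t_k,t_{k+1}}(y_k)\) and \(\Psi_{t_k,t_{k+1}}(y_k)\) lie in \(B_R\) by hypothesis, so the stability bound \eqref{eq:abstract-flow-stability} applies on \([t_{k+1},t_n]\). Each summand is therefore at most \(L_{R,T}\,|\psi_{t_k,t_{k+1}}(y_k)-\Psi_{t_k,t_{k+1}}(y_k)|\). Since \(y_k\in B_R\), the local defect bound \eqref{eq:abstract-local-defect} gives at most \(L_{R,T}C_{\mathrm{loc}}h_k^{\rho}\).

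Summing, I get \(|e_n|\le L_{R,T}C_{\mathrm{loc}}\sum_{k}h_k^{\rho}\le L_{R,T}C_{\mathrm{loc}}h^{\rho-1}\sum_k h_k\le L_{R,T}C_{\mathrm{loc}}Th^{\rho-1}\). This uses \(\rho>1\) and \(\sum_k h_k=T\). Taking the maximum over \(n\) yields \eqref{eq:abstract-global-error}; for \(n=0\) the error is zero.

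The only delicate point is keeping every point at which stability is invoked inside \(B_R\). The stated hypothesis is designed exactly so that this holds: the pair of points compared in the \(k\)-th summand consists of \(\psi_{t_k,t_{k+1}}(y_k)\) and \(\Psi_{t_k,t_{k+1}}(y_k)\), both assumed in \(B_R\). No Gronwall-type recursion is needed, because the stability constant is global on \([0,T]\) rather than per step. So I do not expect a genuine obstacle; the care lies in the bookkeeping of the flow composition.
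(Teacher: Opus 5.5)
Your proposal is correct and follows essentially the same argument as the paper: telescoping via the flow property of $\psi$, then stability applied to the pair $\psi_{t_k,t_{k+1}}(y_k)$, $\Psi_{t_k,t_{k+1}}(y_k)\in B_R$, then the local defect bound, and finally $\sum_k h_k^{\rho}\le T h^{\rho-1}$. You write out explicitly the telescoping identity that the paper only invokes, and that makes the argument clearer.
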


\begin{proof}
Fix \(k\in\{1,\ldots,N\}\). By the flow property of \(\psi\),
the stability estimate \eqref{eq:abstract-flow-stability}, and a
telescoping argument, we have
\[
|Y_{t_k}^{\psi}-y_k|
\le
L_{R,T}
\sum_{j=0}^{k-1}
\left|
\psi_{t_j,t_{j+1}}(y_j)
-
\Psi_{t_j,t_{j+1}}(y_j)
\right|.
\]
Using the local defect estimate \eqref{eq:abstract-local-defect}, it follows that
$$
|Y_{t_k}^{\psi}-y_k|
\le
L_{R,T}C_{\mathrm{loc}}
\sum_{j=0}^{k-1}h_j^\rho .
$$
Since \(h_j\le h\) and \(\rho>1\), we obtain
$$
\sum_{j=0}^{k-1}h_j^\rho
\le
h^{\rho-1}\sum_{j=0}^{N-1}h_j
=
Th^{\rho-1}.
$$
Therefore,
\[
|Y_{t_k}^{\psi}-y_k|
\le
L_{R,T}C_{\mathrm{loc}}T h^{\rho-1}.
\]
Taking the maximum over \(k=0,\ldots,N\) proves
\eqref{eq:abstract-global-error}.
\end{proof}

For the controlled Runge-Kutta method, the numerical iterates are defined by  
\[
y_0^{\mathrm{CRK}}:=Y_0,
\qquad
y_{n+1}^{\mathrm{CRK}}
:=
\Psi_{t_n,t_{n+1}}^{\mathrm{CRK}}
(y_n^{\mathrm{CRK}}),
\qquad
n=0,\ldots,N-1.
\]
Under the hypotheses of
Theorem~\ref{thm:local-truncation-error-rk},
Proposition~\ref{prop:local-to-global} yields the following global error bound.

\begin{theorem} (Global error bound)
\label{thm:global-controlled-rk}
Let \(R>0\) be such that \(y_n^{\mathrm{CRK}}\),
\(\varphi_{t_n,t_{n+1}}(y_n^{\mathrm{CRK}})\) and
\(\Psi_{t_n,t_{n+1}}^{\mathrm{CRK}}(y_n^{\mathrm{CRK}})\)
all belong to \(B_R\) for every \(n=0,\ldots,N-1\).
Then there exists a constant \(C_{p,R,T}>0\), independent of \(h\), such that
\begin{equation}
\label{eq:global-error-controlled-rk}
\max_{0\le n\le N}
\left|
Y_{t_n}-y_n^{\mathrm{CRK}}
\right|
\le
C_{p,R,T}h^{\rho-1}.
\end{equation}
\end{theorem}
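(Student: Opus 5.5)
The plan is to apply Proposition~\ref{prop:local-to-global} with the exact solution flow $\psi_{u,v}:=\varphi_{u,v}$ of \eqref{eq:controlled-driven-rde} and the one-step map $\Psi_{u,v}:=\Psi^{\mathrm{CRK}}_{u,v}$, and with $\rho=(p+1)\alpha>1$. By uniqueness (Lemmas~\ref{thm:local-well-posedness} and~\ref{cor:global-well-posedness}), restarting the equation at time $u$ from the value $y$ gives a well-defined flow with $\varphi_{u,u}=\mathrm{Id}$ and $\varphi_{v,w}\circ\varphi_{u,v}=\varphi_{u,w}$. Moreover, $Y_{t_n}=\varphi_{0,t_n}(Y_0)$, so the reference values in Proposition~\ref{prop:local-to-global} are exactly the exact solution at the grid points.

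The stability hypothesis \eqref{eq:abstract-flow-stability} is supplied directly by \eqref{eq:flow-stability-controlled} in Proposition~\ref{prop:flow-stability}. The smoothness $F_a\in C_b^{p+1}$ with $p\ge2$ gives $C_b^3$.

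The main step is the local defect bound \eqref{eq:abstract-local-defect} \emph{uniformly in the starting point} $y\in B_R$. Theorem~\ref{thm:local-truncation-error-rk} is stated only along the solution, so I would rerun its argument with a general initial value. Fix $y\in B_R$ and $u$, and let $Y^{u,y}_r:=\varphi_{u,r}(y)$ for $r\in[u,T]$. This is the solution of the same CRDE on $[u,T]$ driven by the restriction of $\mathbf Z$. Theorem~\ref{thm:exact-controlled-bseries}, applied on $[u,T]$ with $s=u$, gives
\[
\varphi_{u,v}(y)=B_p(1;y,\mathcal Z_{u,v})+R^{(p+1)}_{u,v}(y).
\]
Theorem~\ref{thm:rk-bseries} gives the corresponding expansion for $\Psi^{\mathrm{CRK}}_{u,v}(y)$ with coefficients $a_{u,v}(\tau)$. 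The tree order condition \eqref{eq:tree-order-condition} then cancels every tree term, leaving
\[
|\varphi_{u,v}(y)-\Psi^{\mathrm{CRK}}_{u,v}(y)|\le C|v-u|^{(p+1)\alpha}.
\]

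The obstacle is that the constant $C_{p,T}$ in Theorem~\ref{thm:exact-controlled-bseries} must be independent of $u$ and of $y\in B_R$. I would track it through that proof. It depends only on three ingredients:
\begin{enumerate}
\item[\rm(a)] the $C_b^{p+1}$ norms of the $F_a$;
\item[\rm(b)] the constants in Proposition~\ref{prop:well-posedness-controlled-coefficients} and Lemma~\ref{lem:controlled-remainder-integration}, which depend only on $\mathbf X$, $(Z,Z')$, $\alpha$ and $T$;
\item[\rm(c)] the controlled norm $\|(Y^{u,y},(Y^{u,y})')\|_{X,\alpha;[u,T]}$.
\end{enumerate}
Item (c) is bounded uniformly in $u$ and $y$ by the a priori estimate behind global well-posedness (Lemma~\ref{cor:global-well-posedness}, \cite{LiGao2026}). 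Because $F$ is bounded with bounded derivatives, this estimate is in fact independent of the initial value, since $Y'=F(Y)Z'$ and the remainder is controlled through $F(Y)$ alone. The constant $C_{p,R,T}$ in Theorem~\ref{thm:rk-bseries} is already uniform over $|y|\le R$. Altogether this gives $C_{\mathrm{loc}}=C_{p,R,T}$.

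Finally, the boundedness hypothesis of Proposition~\ref{prop:local-to-global}, that $y_n$, $\varphi_{t_n,t_{n+1}}(y_n)$ and $\Psi^{\mathrm{CRK}}_{t_n,t_{n+1}}(y_n)$ lie in $B_R$, is precisely the assumption of the theorem. Proposition~\ref{prop:local-to-global} therefore yields
\[
\max_n|Y_{t_n}-y_n^{\mathrm{CRK}}|\le L_{R,T}C_{\mathrm{loc}}Th^{\rho-1},
\]
which is \eqref{eq:global-error-controlled-rk} with $C_{p,R,T}:=L_{R,T}C_{\mathrm{loc}}T$. The telescoping in Proposition~\ref{prop:local-to-global} uses stability only at points of $B_R$, which is guaranteed by the same hypothesis.
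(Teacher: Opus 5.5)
Your proposal is correct and follows the paper's proof. Both apply Proposition~\ref{prop:local-to-global} with $\psi_{u,v}=\varphi_{u,v}$ and $\Psi_{u,v}=\Psi^{\mathrm{CRK}}_{u,v}$, take stability from \eqref{eq:flow-stability-controlled}, and obtain the local defect from the tree-order cancellation. Your extra step re-runs the local truncation argument from an arbitrary starting point $y\in B_R$ at time $u$, with constants kept uniform in $u$ and $y$. This makes explicit what the paper's proof asserts without comment when it quotes Theorem~\ref{thm:local-truncation-error-rk}, which is stated only for $y=Y_u$, as holding for all $y\in B_R$.
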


\begin{proof}
By Theorem~\ref{thm:local-truncation-error-rk}, for any
\(y\in B_R\),
\[
\left|
\varphi_{u,v}(y)-\Psi_{u,v}^{\mathrm{CRK}}(y)
\right|
\le
C_{p,R,T}|v-u|^\rho ,
\qquad
0\le u\le v\le T .
\]
Together with the stability estimate
\eqref{eq:flow-stability-controlled}, Proposition~\ref{prop:local-to-global}
applies with 
\[
\psi_{u,v}=\varphi_{u,v},
\qquad
\Psi_{u,v}=\Psi_{u,v}^{\mathrm{CRK}},
\qquad
C_{\mathrm{loc}}=C_{p,R,T}.
\]
Since \(Y_{t_n}^{\psi}=\varphi_{0,t_n}(Y_0)=Y_{t_n}\), we obtain
\[
\max_{0\le n\le N}
|Y_{t_n}-y_n^{\mathrm{CRK}}|
\le
L_{R,T}C_{p,R,T}Th^{\rho-1}.
\]
Absorbing \(L_{R,T}T\) into the constant proves
\eqref{eq:global-error-controlled-rk}.
\end{proof}


We next consider the simplified method from
Section~\ref{sec:simplified-rk}. The numerical iterates are defined by
\[
y_0^{\mathrm{SCRK}}:=Y_0,
\qquad
y_{n+1}^{\mathrm{SCRK}}
:=
\Psi_{t_n,t_{n+1}}^{\mathrm{SCRK}}
\bigl(y_n^{\mathrm{SCRK}}\bigr),
\qquad
n=0,\ldots,N-1 .
\]
Let \(Y^h\) denote the solution of the piecewise linear problem
\eqref{eq:piecewise-linear-rde}. Under the assumptions of
Theorem~\ref{thm:simplified-local-error-piecewise-linear}, the following
result gives the global discretization error.
\begin{theorem}
\label{thm:global-simplified-smooth}
Let \(R>0\) be such that \(y_n^{\mathrm{SCRK}}\),
\(\varphi^h_{t_n,t_{n+1}}(y_n^{\mathrm{SCRK}})\) and
\(\Psi_{t_n,t_{n+1}}^{\mathrm{SCRK}}(y_n^{\mathrm{SCRK}})\)
all belong to \(B_R\) for every \(n=0,\ldots,N-1\).
Then there exists a constant \(C_{p,R,T}>0\), independent of \(h\), such that
\begin{equation}
\label{eq:global-error-simplified-smooth}
\max_{0\le n\le N}
|
Y_{t_n}^h-y_n^{\mathrm{SCRK}}
|
\le
C_{p,R,T}h^{\rho-1}.
\end{equation}
\end{theorem}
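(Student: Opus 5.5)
The plan mirrors the proof of Theorem~\ref{thm:global-controlled-rk}, with the exact flow $\varphi$ replaced by the piecewise linear flow $\varphi^h$. I will apply Proposition~\ref{prop:local-to-global} with
\[
\psi_{u,v}=\varphi^h_{u,v},\qquad \Psi_{u,v}=\Psi^{\mathrm{SCRK}}_{u,v},\qquad \rho=(p+1)\alpha>1 .
\]
The reference values are then $Y^{\psi}_{t_n}=\varphi^h_{0,t_n}(Y_0)=Y^h_{t_n}$. The flow property of $\varphi^h$ holds because, on each mesh interval, $Y^h$ solves the ODE \eqref{E5-3}, and these pieces are concatenated at the grid points. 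The containment hypotheses of Proposition~\ref{prop:local-to-global} are exactly the standing assumption of the theorem on $R$.

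The first ingredient is stability. By \eqref{eq:flow-stability-piecewise-linear} in Proposition~\ref{prop:flow-stability}, $\varphi^h_{s,t}$ is Lipschitz on $B_R$ with a constant $L_{R,T}$ independent of $h$. This requires checking that the drivers $\mathbf Z^h=(Z^h,0)$ have uniformly bounded controlled norms, which I regard as the main point needing care. The bound $\|Z^h\|_{\alpha}\le 3\|Z\|_\alpha$ is standard for piecewise linear interpolation: treat separately the case where $s,t$ lie in the same mesh interval, where $|Z^h_{s,t}|\le \frac{|t-s|}{h_n}|\Delta Z_n|\le |t-s|^\alpha\|Z\|_\alpha$, and the case where they lie in different intervals, using the grid points as intermediate anchors. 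Since the Gubinelli derivative is zero, $R^{Z^h}_{s,t}=Z^h_{s,t}$, and the $2\alpha$-norm of this is not uniformly bounded. If that causes trouble, I would instead argue stability directly. On each grid interval, $\varphi^h_{t_n,t_{n+1}}$ is the time-$1$ flow of $\frac{d}{d\theta}Y=F(Y)\Delta Z_n$, so Gronwall gives a Lipschitz constant $\exp(C\|DF\|_\infty|\Delta Z_n|)$. This constant is close to $1$, but its product over all steps is not uniformly bounded for rough $Z$. I would therefore fall back on invoking \eqref{eq:flow-stability-piecewise-linear} as stated under its hypothesis, which is what the paper's framework provides.

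The second ingredient is the local defect \eqref{eq:abstract-local-defect}. I will take it from Theorem~\ref{thm:simplified-local-error-piecewise-linear}, applied with an arbitrary starting point $y\in B_R$ in place of $Y^h_{t_n}$. This works because both Corollary~\ref{cor:exact-bseries-piecewise-linear} and Theorem~\ref{thm:simplified-bseries} expand around an arbitrary initial state on a grid interval, so the proof gives
\[
|\varphi^h_{t_n,t_{n+1}}(y)-\Psi^{\mathrm{SCRK}}_{t_n,t_{n+1}}(y)|\le C_{p,R,T}h_n^{\rho}.
\]
Only grid intervals are needed, since the telescoping in Proposition~\ref{prop:local-to-global} uses only $[t_j,t_{j+1}]$.

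Proposition~\ref{prop:local-to-global} then yields $\max_n|Y^h_{t_n}-y_n^{\mathrm{SCRK}}|\le L_{R,T}C_{p,R,T}Th^{\rho-1}$. Absorbing $L_{R,T}T$ into the constant gives \eqref{eq:global-error-simplified-smooth}.
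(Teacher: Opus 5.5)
Your proposal follows the paper's proof: apply Proposition~\ref{prop:local-to-global} with $\psi=\varphi^h$ and $\Psi=\Psi^{\mathrm{SCRK}}$, take the grid-interval defect $C_{p,R,T}h_n^{\rho}$ for arbitrary $y\in B_R$ from Theorem~\ref{thm:simplified-local-error-piecewise-linear} and the $h$-uniform Lipschitz bound from \eqref{eq:flow-stability-piecewise-linear}, and absorb $L_{R,T}T$ into the constant. Your caveat is accurate and applies equally to the paper, which also invokes \eqref{eq:flow-stability-piecewise-linear} without checking its hypothesis: for $(Z^h,0)$ one has $R^{Z^h}_{s,t}=Z^h_{s,t}$, whose $2\alpha$-H\"older norm is not bounded uniformly in $h$ for genuinely rough $Z$, so in both arguments the $h$-uniform stability of $\varphi^h$ is effectively an extra assumption (it would follow, for instance, from $h$-uniform $\alpha$-H\"older rough path bounds on the canonical lifts of the piecewise linear paths $Z^h$).
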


\begin{proof}
By Theorem~\ref{thm:simplified-local-error-piecewise-linear}, for
\(y\in B_R\) and every grid interval
\([t_n,t_{n+1}]\), it follows that
$$
|
\varphi^h_{t_n,t_{n+1}}(y)
-
\Psi^{\mathrm{SCRK}}_{t_n,t_{n+1}}(y)
|
\le
C_{p,R,T}h_n^\rho .
$$
The stability estimate in
\eqref{eq:flow-stability-piecewise-linear} allows us to apply
Proposition~\ref{prop:local-to-global} with
$
\psi_{u,v}=\varphi^h_{u,v}$ and 
$\Psi_{u,v}=\Psi^{\mathrm{SCRK}}_{u,v}.
$
Since
$
Y_{t_n}^{\psi}
=
\varphi^h_{0,t_n}(Y_0)
=
Y_{t_n}^h,
$
we obtain
\[
\max_{0\le n\le N}
|
Y_{t_n}^h-y_n^{\mathrm{SCRK}}
|
\le
L_{R,T}C_{p,R,T}T h^{\rho-1}.
\]
Absorbing \(L_{R,T}T\) into the constant proves
\eqref{eq:global-error-simplified-smooth}.
\end{proof}

Theorem~\ref{thm:global-simplified-smooth} estimates the discretization
error \(Y^h-y^{\mathrm{SCRK}}\). The remaining approximation error
between \(Y\) and \(Y^h\) is controlled by the following Wong-Zakai type
assumption, motivated by Gaussian rough path approximation results
\cite{FR2014}.
\begin{assumption}
\label{ass:controlled-wong-zakai}
There exist constants \(C_{\mathrm{WZ}}>0\), \(r_0>0\)  and \(h_0>0\) such that,
for every partition with mesh size \(0<h\le h_0\),
\begin{equation*}
\sup_{0\le t\le T}|Y_t-Y_t^h|
\le
C_{\mathrm{WZ}}h^{r_0}.
\end{equation*}
\end{assumption}

We arrive at the global error bound of the simplified scheme.

\begin{theorem} (Global error bound of the simplified scheme)
\label{thm:global-simplified-rk}
Assume that Assumption~\ref{ass:controlled-wong-zakai} and the
hypotheses of Theorem~\ref{thm:global-simplified-smooth} hold. There exists a constant
\(C_{p,R,T}>0\), independent of \(h\), such that
\begin{equation}
\label{eq:global-error-simplified}
\max_{0\le n\le N}
|Y_{t_n}-y_n^{\mathrm{SCRK}}|
\le
C_{p,R,T}
\left(
h^{r_0}
+
h^{(p+1)\alpha-1}
\right).
\end{equation}
Consequently, as \(h\downarrow0\),
\begin{equation}
\label{eq:global-error-simplified-min}
\max_{0\le n\le N}
|Y_{t_n}-y_n^{\mathrm{SCRK}}|
=
O\!\left(
h^{\min\{r_0,(p+1)\alpha-1\}}
\right).
\end{equation}
\end{theorem}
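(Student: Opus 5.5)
The plan is a triangle inequality at the grid points, splitting the error through the piecewise linear solution $Y^h$ of \eqref{eq:piecewise-linear-rde}:
\[
|Y_{t_n}-y_n^{\mathrm{SCRK}}|
\le
|Y_{t_n}-Y^h_{t_n}|
+
|Y^h_{t_n}-y_n^{\mathrm{SCRK}}|,
\qquad 0\le n\le N .
\]
The first term is the driver-approximation (Wong--Zakai) error. The second is the pure discretization error of the simplified scheme relative to the interpolated problem. The two are estimated separately by results already available.

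For the first term, I would use Assumption~\ref{ass:controlled-wong-zakai}. For $0<h\le h_0$ it gives
\[
\max_{n}|Y_{t_n}-Y^h_{t_n}|\le\sup_{t}|Y_t-Y^h_t|\le C_{\mathrm{WZ}}h^{r_0}.
\]
For the second term, the hypotheses of Theorem~\ref{thm:global-simplified-smooth} hold by assumption. That theorem yields $\max_n|Y^h_{t_n}-y_n^{\mathrm{SCRK}}|\le C_{p,R,T}h^{\rho-1}$ with $\rho=(p+1)\alpha$. Enlarging the constant to $\max\{C_{\mathrm{WZ}},C_{p,R,T}\}$, which is still independent of $h$, gives \eqref{eq:global-error-simplified}.

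The main (mild) obstacle is the range of validity in $h$. The Wong--Zakai bound is only assumed for $h\le h_0$. For $h_0<h\le T$, I would note that both $Y$ and $y_n^{\mathrm{SCRK}}$ are bounded: $Y$ is continuous on $[0,T]$ and $y_n^{\mathrm{SCRK}}\in B_R$. The error is therefore at most $\max_t|Y_t|+R$, and this is at most a constant multiple of $h_0^{r_0}\le h^{r_0}$. Since \eqref{eq:global-error-simplified-min} is an asymptotic statement as $h\downarrow0$, it is in any case enough to restrict to $h\le h_0$.

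Finally, for \eqref{eq:global-error-simplified-min}, I would set $\kappa:=\min\{r_0,(p+1)\alpha-1\}>0$. For $h\le1$ both powers satisfy $h^{r_0}\le h^{\kappa}$ and $h^{(p+1)\alpha-1}\le h^{\kappa}$. Hence the right-hand side of \eqref{eq:global-error-simplified} is bounded by $2C_{p,R,T}h^{\kappa}$, which gives the stated $O$-rate.
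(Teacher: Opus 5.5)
Your proof is correct and follows the paper's argument essentially verbatim: the triangle inequality through $Y^h$, Assumption~\ref{ass:controlled-wong-zakai} for $|Y_{t_n}-Y^h_{t_n}|$, Theorem~\ref{thm:global-simplified-smooth} for $|Y^h_{t_n}-y_n^{\mathrm{SCRK}}|$, and the bound $h^{r_0}+h^{(p+1)\alpha-1}\le 2h^{\min\{r_0,(p+1)\alpha-1\}}$ for $h\le 1$. Your extra treatment of meshes with $h>h_0$, where the Wong--Zakai bound is not assumed, addresses a point the paper silently skips; your boundedness argument handles it correctly, at the cost of a constant that also depends on $h_0$ and $\sup_t|Y_t|$.
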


\begin{proof}
The triangle inequality yields
\[
\max_{0\le n\le N}
|Y_{t_n}-y_n^{\mathrm{SCRK}}|
\le
\max_{0\le n\le N}
|Y_{t_n}-Y_{t_n}^{h}|
+
\max_{0\le n\le N}
|Y_{t_n}^{h}-y_n^{\mathrm{SCRK}}|.
\]
The first term is bounded by Assumption~\ref{ass:controlled-wong-zakai},
while the second one is estimated in
Theorem~\ref{thm:global-simplified-smooth}. Therefore,
\[
\max_{0\le n\le N}
|
Y_{t_n}-y_n^{\mathrm{SCRK}}
|
\le
C_{p,R,T}
(
h^{r_0}
+
h^{(p+1)\alpha-1}
).
\]
This proves \eqref{eq:global-error-simplified}. Since
\(r_0>0\) and \((p+1)\alpha-1>0\), we have 
$$
h^{r_0}+h^{(p+1)\alpha-1}
\le
2h^{\min\{r_0,(p+1)\alpha-1\}}, \qquad 0<h\le1.
$$
Hence,
\[
\max_{0\le n\le N}
|
Y_{t_n}-y_n^{\mathrm{SCRK}}
|
=
O\left(
h^{\min\{r_0,(p+1)\alpha-1\}}
\right),
\]
which proves \eqref{eq:global-error-simplified-min}.
\end{proof}

The following corollary gives the global error bound for the third-order simplified scheme.

\begin{corollary} \label{cor:global-third-order}
Suppose that Assumption~\ref{ass:controlled-wong-zakai} holds and that the simplified controlled Runge-Kutta method satisfies the third-order conditions \eqref{E5-1} for 
$
\alpha\in (\frac{1}{3},\frac{1}{2}] 
$.
Then there exists a constant
\(C_{R,T}>0\), independent of \(h\), such that
\begin{equation*}
\max_{0\le n\le N}
|Y_{t_n}-y_n^{\mathrm{SCRK}}|
\le
C_{R,T}
h^{\min\{r_0,4\alpha-1\}}.
\end{equation*}
\end{corollary}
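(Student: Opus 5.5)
The plan is to obtain the corollary by specializing Theorem~\ref{thm:global-simplified-rk} to $p=3$. The first step is to check that the third-order conditions \eqref{E5-1} are equivalent to the simplified method having tree order $3$ in the sense of Definition~\ref{def:simplified-tree-order}. This is exactly Proposition~\ref{prop:simplified-third-order-conditions}. The identities $\widehat a(\tau)=1/\gamma(\tau)$ therefore hold for every $\tau\in\mathcal T_{\le 3}$. Since $\widehat a$ depends only on the tree shape, they hold for all labellings $a,b,c\in\mathcal A$.

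Next I check the standing hypotheses with $p=3$. Since $\alpha>1/3$, we have $\rho=(p+1)\alpha=4\alpha>4/3>1$, so the requirement $(p+1)\alpha>1$ in Theorem~\ref{thm:simplified-bseries} and Corollary~\ref{cor:exact-bseries-piecewise-linear} is satisfied. The smoothness assumption needed there is $F_a\in C_b^{4}(\mathbb R^e;\mathbb R^e)$. I would state that this, together with $C_b^3$ for the flow stability in Proposition~\ref{prop:flow-stability}, is implicitly inherited as part of ``the hypotheses of Theorem~\ref{thm:global-simplified-smooth}.'' I would likewise take over the stage solvability from Remark~\ref{rem:explicit-implicit-srk}, which is automatic for explicit tableaux such as Example~\ref{ex:heun-controlled}. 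I would also take over the boundedness condition that the iterates and the one-step images lie in some ball $B_R$.

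With these in place, Theorem~\ref{thm:simplified-local-error-piecewise-linear} gives local error $C_{3,R,T}h_n^{4\alpha}$ relative to $Y^h$. Theorem~\ref{thm:global-simplified-smooth} then gives $\max_n|Y^h_{t_n}-y_n^{\mathrm{SCRK}}|\le C h^{4\alpha-1}$. Combining this via Theorem~\ref{thm:global-simplified-rk} with Assumption~\ref{ass:controlled-wong-zakai} yields a bound $C(h^{r_0}+h^{4\alpha-1})$.

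The final step is to absorb the sum into the single power $h^{\min\{r_0,4\alpha-1\}}$. For $0<h\le 1$ this follows from the elementary inequality used at the end of the proof of Theorem~\ref{thm:global-simplified-rk}. For $1<h\le T$ (if $T>1$), both powers are bounded by $\max\{T^{r_0},T^{4\alpha-1}\}$ times $h^{\min}$, since $h^{\min}\ge 1$ there; that constant is absorbed into $C_{R,T}$. The Wong--Zakai bound is only assumed for $h\le h_0$. For $h>h_0$ I would bound $|Y_{t_n}-y_n^{\mathrm{SCRK}}|\le 2R'$ using boundedness of $Y$ on $[0,T]$ and the $B_R$ hypothesis on the iterates, and then use $h^{\min}\ge h_0^{\min}$ to absorb this into the constant as well.

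The main obstacle is precisely this bookkeeping. There is nothing analytic left to prove; the task is to make sure the hypotheses implicitly required (the $C_b^4$ regularity, solvability, the $B_R$ boundedness, and the range $h>h_0$) are correctly imported, so that the final constant $C_{R,T}$ is genuinely independent of $h$.
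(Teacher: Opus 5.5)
Your proposal is correct and follows the paper's proof. Proposition~\ref{prop:simplified-third-order-conditions} turns \eqref{E5-1} into tree order $3$, and Theorem~\ref{thm:global-simplified-rk} with $(p+1)\alpha-1=4\alpha-1$ then gives the bound; your extra bookkeeping (the implicit $C_b^4$ hypothesis, and absorbing the cases $h>1$ and $h>h_0$ into the constant) is sound and only makes explicit what the paper's two-line argument leaves tacit.
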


\begin{proof}
By Proposition~\ref{prop:simplified-third-order-conditions}, the simplified
method has tree order \(p=3\). Since
$
(p+1)\alpha-1=4\alpha-1,
$
the result follows from Theorem~\ref{thm:global-simplified-rk}.
\end{proof}

\subsection{Numerical experiments}
\label{sec:numerical-experiments}
We test the simplified controlled Runge-Kutta method
with the controlled driver
\[
\mathbf Z=(Z,Z'),\qquad
Z_t=\varphi(X_t),
\qquad
\varphi(x_1,x_2)=(x_1,\sin x_2),
\]
where \(X=(X^1,X^2)\) is a two-dimensional fractional Brownian motion
with independent components and
\(H\in\{0.40,0.45,0.50\}\).
The path \(X\) is equipped with its natural Gaussian rough path lift
\cite{FN2010}.
The Gubinelli derivative of \(Z\) is
\begin{equation*}
Z'_t
=
D\varphi(X_t)
=
\begin{pmatrix}
1 & 0\\
0 & \cos(X_t^2)
\end{pmatrix}.
\end{equation*}
We consider the scalar controlled-driven rough differential equation
\begin{equation}\label{T7-1}
dY_t
=
\cos(Y_t)\,dZ_t^1
+
\sin(Y_t)\,dZ_t^2,
\qquad
Y_0=1,
\qquad
0\le t\le T,
\end{equation}
with \(T=0.25\), where
$
F(y)(z_1,z_2)
=
\cos(y)z_1+\sin(y)z_2 .
$
For a step \([t_n,t_{n+1}]\), let
\[
\Delta Z_n:=Z_{t_n,t_{n+1}}=Z_{t_{n+1}}-Z_{t_n},
\qquad
G(y,\Delta Z_n):=F(y)\Delta Z_n
=
\cos(y)\Delta Z_n^1+\sin(y)\Delta Z_n^2 .
\]
We apply Definition~\ref{def:simplified-controlled-rk}
with \(u=t_n\), \(v=t_{n+1}\)  and \(y=y_n\).
Using the third-order method in
Example~\ref{ex:heun-controlled} with the Butcher coefficients,
we obtain
\[
y_{n,1}=y_n,\qquad
y_{n,2}=y_n+\frac13G(y_n,\Delta Z_n),\qquad
y_{n,3}=y_n+\frac23G(y_{n,2},\Delta Z_n),
\]
and
\[
y_{n+1}
=
y_n+\frac14
\bigl(G(y_n,\Delta Z_n)+3G(y_{n,3},\Delta Z_n)\bigr).
\]
We employ the following reference convergence rate for the numerical
comparison. Combining Theorem~\ref{thm:global-simplified-rk} with
Assumption~\ref{ass:controlled-wong-zakai} gives
\[
\max_{0\le n\le N}
|Y_{t_n}-y_n^{\mathrm{SCRK}}|
\le
C\bigl(h^{r_0}+h^{(p+1)\alpha-1}\bigr),
\]
so the rate is determined by
\(\min\{r_0,(p+1)\alpha-1\}\).
For \(p=3\) and \(\alpha\approx H\), we take
\(2H-\frac12\) as the reference rate for the piecewise linear
approximation, following Gaussian rough path results
\cite{FR2014}. This rate is used only as a numerical reference. Therefore, the
reference slope is chosen as
\[
\rho_{\mathrm{ref}}=2H-\frac12,
\qquad H=0.40,0.45,0.50 .
\]

The fractional Brownian paths are generated by the Davies-Harte method
\cite{DH1987}. A numerical reference solution is computed on the fine grid
with \(N_{\rm ref}=2^{18}\), giving
\[
h_{\rm ref}=T/N_{\rm ref}=2^{-20},
\qquad T=2^{-2}.
\]
The numerical solutions are computed on the coarser grids
\(h=2^{-\ell}\), \(\ell=7,\ldots,15\).
A numerical reference solution is computed on the fine grid, and the
following errors are measured with respect to this reference solution.
Let
$
\pi_h=\{0=t_0<t_1<\cdots<t_{N_h}=T\}
$. For each sample path \(j=1,\ldots,M = 100\), define
\[
E_j(h)
=
\max_{0\le k\le N_h}
|y_{t_k}^{\mathrm{ref},j}-y_k^{h,j}|,
\]
where \(y_{t_k}^{\mathrm{ref},j}\) denotes the value of the reference
solution computed on the fine grid at the coarse-grid point \(t_k\). The mean error is
\[
\overline E(h)=\frac1M\sum_{j=1}^M E_j(h).
\]

The convergence rates are estimated by fitting the log--log error curves. For each sample path \(j=1,\ldots,M\), we fit
\[
\log_{10}E_j(h)
\approx
\rho_j\log_{10}h+c_j,
\qquad
\ell=11,\ldots,15,
\]
where \(\rho_j\) is the fitted convergence rate and \(c_j\) is the
intercept. The mean and standard deviation of the pathwise rates are
\[
\overline\rho_{\rm path}
:=
\frac1M\sum_{j=1}^M\rho_j,
\qquad
s_{\rm path}
:=
\left(
\frac1{M-1}
\sum_{j=1}^M
(\rho_j-\overline\rho_{\rm path})^2
\right)^{1/2}.
\]
The mean error curve is fitted by
$
\log_{10}\overline E(h)
\approx
\rho_{\rm mean}\log_{10}h+c,
$
where \(\rho_{\rm mean}\) is the corresponding fitted rate. The fitted convergence rates are summarized below.

\begin{figure}[htbp]
\centering
\includegraphics[width=\textwidth]{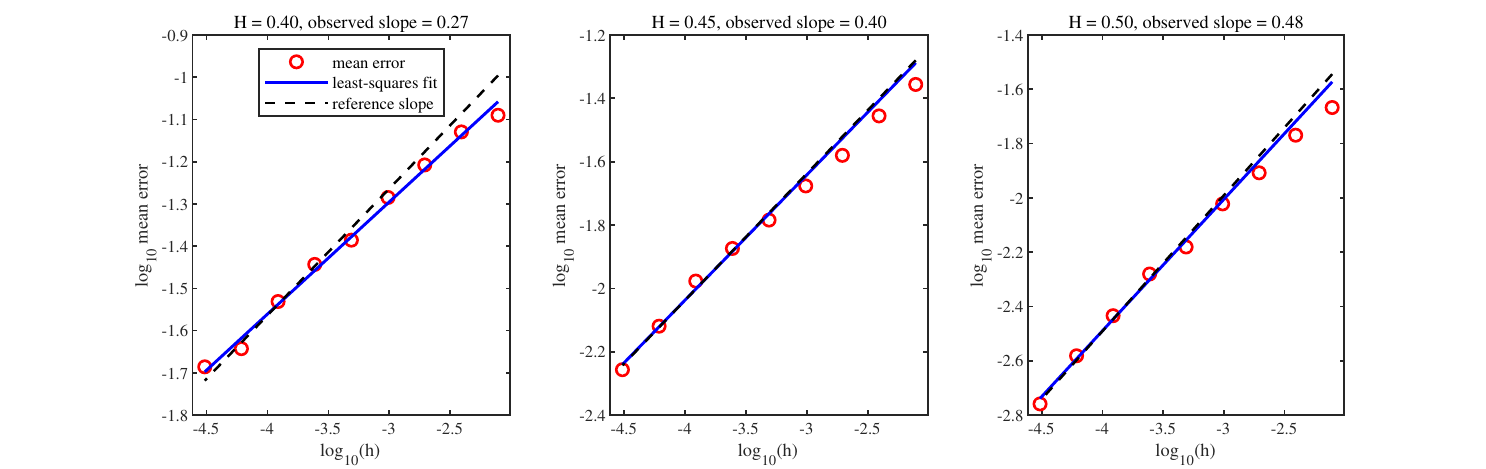}
\caption{
Mean error of the simplified controlled Runge-Kutta method for
\eqref{T7-1} with \(Z_t=(X_t^1,\sin(X_t^2))\).
Circles show \(\log_{10}\overline E(h)\) averaged over \(M=100\) sample
paths, solid lines are least-squares fits over \(\ell=11,\ldots,15\),
and dashed lines indicate the reference slope
\(\rho_{\rm ref}=2H-\frac12\).
}
\label{fig:controlled-driver-mean-error}
\end{figure}

\begin{table}[htbp]
\centering
\caption{
Observed convergence rates for the controlled-driver example
\(Z_t=(X_t^1,\sin(X_t^2))\). Here
\(\rho_{\mathrm{ref}}=2H-\frac12\) is the reference rate,
\(\overline\rho_{\mathrm{path}}\) and \(s_{\mathrm{path}}\) are the
mean and standard deviation of the pathwise fitted rates, and
\(\rho_{\mathrm{mean}}\) is the fitted rate of the mean error curve.
}
\label{tab:controlled-driver-rates}
\begin{tabular}{ccccc}
\toprule
\(H\)
& \(\rho_{\mathrm{ref}}\)
& \(\overline\rho_{\mathrm{path}}\)
& \(s_{\mathrm{path}}\)
& \(\rho_{\mathrm{mean}}\) \\
\midrule
0.40 & 0.300 & 0.267 & 0.226 & 0.266 \\
0.45 & 0.400 & 0.385 & 0.173 & 0.395 \\
0.50 & 0.500 & 0.484 & 0.234 & 0.484 \\
\bottomrule
\end{tabular}
\end{table}

The results are shown in Figure~\ref{fig:controlled-driver-mean-error}
and Table~\ref{tab:controlled-driver-rates}. The fitted rates
\(0.266,0.395,0.484\) for \(H=0.40,0.45,0.50\) agree well with the
reference rates \(2H-\frac12\). The larger deviation for \(H=0.40\)
is consistent with related rough path numerical experiments
\cite{RR2022}. Since \(2H-\frac12<4H-1\), the observed rates suggest that the
Wong-Zakai approximation error dominates in the tested regime.

\vskip 0.2in

\noindent
{\bf Acknowledgments.} Xing Gao is supported by the National Natural Science Foundation of China (12571019), the Natural Science Foundation of Gansu Province (25JRRA644) and Innovative Fundamental Research Group Project of Gansu Province (23JRRA684). Xingya Fan was supported by the National Natural Science Foundation of China (12561018).

\noindent
{\bf Declaration of interests. } The authors have no conflicts of interest to disclose.

\noindent
{\bf Data availability. } Data sharing is not applicable as no new data were created or analyzed.


\smallskip

\begin{thebibliography}{99}

\bibitem{A23}
A. Ananova, Rough differential equations with path-dependent coefficients, Ann. H. Lebesgue, {\bf 6} (2023), 1--29.

\bibitem{JDT2022}
J. Armstrong, D. Brigo, T. Cass and E. Rossi Ferrucci, Non-geometric rough paths on manifolds, J. Lond. Math. Soc., {\bf 106(2)} (2022), 756--817.

\bibitem{Bailleul2015}
I. Bailleul, Regularity of the It\^o--Lyons map, Confluentes Math.,
{\bf 7(1)} (2015), 3--11.

\bibitem{Burrage2000}
K. Burrage and P. M. Burrage, Order conditions of stochastic Runge-Kutta
methods by B-series, SIAM J. Numer. Anal., {\bf 38(5)} (2000),
1626--1646.

\bibitem{Butcher1963}
J. C. Butcher, Coefficients for the study of Runge-Kutta integration processes, J. Aust. Math. Soc.,
{\bf 3(2)} (1963), 185--201.

\bibitem{Butcher1972}
J. C. Butcher, An algebraic theory of integration methods, Math. Comp.,
{\bf 26(117)} (1972), 79--106.

\bibitem{Butcher1987}
J. C. Butcher, The Numerical Analysis of Ordinary Differential Equations,
Wiley, 1987.

\bibitem{Davie2008}
A. M. Davie, Differential equations driven by rough paths: an approach via
discrete approximation, Appl. Math. Res. Express, {\bf 2008} (2008),
abm009.

\bibitem{DH1987}
R. B. Davies and D. S. Harte, Tests for Hurst effect, Biometrika,
{\bf 74(1)} (1987), 95--101.


\bibitem{DNT2012}
A. Deya, A. Neuenkirch and S. Tindel, A Milstein-type scheme without
L\'evy area terms for SDEs driven by fractional Brownian motion,
Ann. Inst. Henri Poincar\'e{} Probab. Stat., {\bf 48(2)} (2012),
518--550.


\bibitem{FH2020}
P. K. Friz and M. Hairer, A Course on Rough Paths, Springer, 2020.

\bibitem{FR2014}
P. K. Friz and S. Riedel, Convergence rates for the full Gaussian rough
paths, Ann. Inst. Henri Poincar\'e{} Probab. Stat., {\bf 50(1)} (2014),
154--194.

\bibitem{FN2010}
P. K. Friz and N. B. Victoir, Differential equations driven by Gaussian
signals, Ann. Inst. Henri Poincar\'e{} Probab. Stat., {\bf 46(2)}
(2010), 369--413.

\bibitem{FV2010}
P. K. Friz and N. B. Victoir, Multidimensional Stochastic Processes as
Rough Paths: Theory and Applications, Cambridge University Press, 2010.

\bibitem{Gubinelli2004}
M. Gubinelli, Controlling rough paths, J. Funct. Anal., {\bf 216(1)}
(2004), 86--140.

\bibitem{Gub2010}
M. Gubinelli, Ramification of rough paths, J. Differential Equations,
{\bf 248(4)} (2010), 693--721.

\bibitem{HLW2006}
E. Hairer, C. Lubich and G. Wanner, Geometric Numerical Integration:
Structure-Preserving Algorithms for Ordinary Differential Equations,
Springer, 2006.

\bibitem{Hairer1993}
E. Hairer, S. P. N{\o}rsett and G. Wanner, Solving Ordinary Differential
Equations I: Nonstiff Problems, Springer, 1993.

\bibitem{EW1996}
E. Hairer and G. Wanner, Solving Ordinary Differential Equations II: Stiff and Differential-Algebraic Problems.
Springer, 1996.

\bibitem{HK2015}
M. Hairer and D. Kelly, Geometric versus non-geometric rough paths,
Ann. Inst. Henri Poincar\'e{} Probab. Stat., {\bf 51(1)} (2015),
207--251.


\bibitem{HHW2018}
J. Hong, C. Huang and X. Wang, Symplectic Runge-Kutta methods for
Hamiltonian systems driven by Gaussian rough paths, Appl. Numer. Math.,
{\bf 129} (2018), 120--136.


\bibitem{LiGao2026}
N. Li and X. Gao, Universal limit theorem for rough differential equations driven by controlled rough paths, preprint, 2026.

\bibitem{Lyons1998}
T. Lyons, Differential equations driven by rough signals, Rev. Mat.
Iberoamericana, {\bf 14(2)} (1998), 215--310.

\bibitem{LQ2002}
T. Lyons and Z. Qian, System Control and Rough Paths, Oxford University
Press, 2002.


\bibitem{RR2022}
M. Redmann and S. Riedel, Runge-Kutta methods for rough differential
equations, J. Stoch. Anal., {\bf 3(4)} (2022), Art. 6, 24 pp.

\end{thebibliography}
\end{document}